\newfont{\bms}{msbm10 scaled 900}
\newfont{\bmsi}{msbm10 scaled 500}
\newfont{\bmsw}{msbm10 scaled 2400}
\newfont{\bmst}{msbm10 scaled 1600}
\theoremstyle{plain}
\newtheorem{theorem}{Theorem}[section]
\newtheorem{corollary}[theorem]{Corollary}
\newtheorem{conjecture}[theorem]{Conjecture}
\newtheorem{remark}[theorem]{Remark}
\theoremstyle{definition}
\theoremstyle{example}
\newtheorem{example}[theorem]{Example}
\theoremstyle{proposition}
\newtheorem{proposition}[theorem]{Proposition}
\theoremstyle{lemma}
\newtheorem{lemma}[theorem]{Lemma}
\newcommand{\cc}{\bm{c}}
\newcommand{\xx}{\bm{x}}
\newcommand{\yy}{\bm{y}}
\newcommand{\ee}{\bm{e}}
\begin{document}

%% Title, authors and addresses

%% use the tnoteref command within \title for footnotes;
%% use the tnotetext command for theassociated footnote;
%% use the fnref command within \author or \address for footnotes;
%% use the fntext command for theassociated footnote;
%% use the corref command within \author for corresponding author footnotes;
%% use the cortext command for theassociated footnote;
%% use the ead command for the email address,
%% and the form \ead[url] for the home page:
%% \title{Title\tnoteref{label1}}
%% \tnotetext[label1]{}
%% \author{Name\corref{cor1}\fnref{label2}}
%% \ead{email address}
%% \ead[url]{home page}
%% \fntext[label2]{}
%% \cortext[cor1]{}
%% \address{Address\fnref{label3}}
%% \fntext[label3]{}

\title{Perfect codes in the $l_p$ metric\footnotetext{Work partially supported by FAPESP grants 2014/20602-8 and 2013/25977-7 and by CNPq grant 312926/2013-8 \\ \indent \indent A. Campello is currently on leave from University of Campinas at T\'el\'ecom ParisTech, France. J. E. Strapasson and S. I. R. Costa are with University of Campinas, Brazil. G. C. Jorge is with Federal University of S\~ao Paulo, Brazil. E mails: campello@ime.unicamp.br, grasiele.jorge@unifesp.br, joao.strapasson@fca.unicamp.br, sueli@ime.unicamp.br}}

\author{A. Campello, G. C. Jorge,
        J. E. Strapasson, S. I. R. Costa
}

\date{}

\maketitle

\begin{abstract}
We investigate perfect codes in $\mathbb{Z}^n$ in the $\ell_p$ metric. Upper bounds for the packing radius $r$ of a linear perfect code in terms of the metric parameter $p$ and the dimension $n$ are derived. For $p = 2$ and $n = 2, 3$, we determine all radii for which there exist linear perfect codes. The non-existence results for codes in $\mathbb{Z}^n$ presented here imply non-existence results for codes over finite alphabets $\mathbb{Z}_q$, when the alphabet size is large enough, and have implications on some recent constructions of spherical codes.
\end{abstract}

% \linenumbers

%% main text

\section{Introduction}

Let $\mathcal{S} \subset \mathbb{Z}^n$. A collection of disjoint translates of $\mathcal{S}$ is called a \textit{tiling} of $\mathbb{Z}^n$ if the union of its elements is equal to $\mathbb{Z}^n$. We investigate tilings of $\mathbb{Z}^n$ by balls in the $\ell_p$ metric, $p\geq 1$, i.e.,
\begin{equation}B_{p}^n(r) := \left\{ (x_1, \ldots, x_n) \in \mathbb{Z}^n: |x_1|^p + \ldots + |x_n|^p \leq r^p \right\}
\end{equation}
and in the $\ell_{\infty}$ metric, ${B_{\infty}^n(r) :=\{ (x_1, \ldots, x_n) \in \mathbb{Z}^n : \max\{|x_1|,\ldots,|x_n|\} \leq r \}.}$

The set  $\mathscr{C} \subset \mathbb{Z}^{n}$ of such a tiling is also called a \textit{perfect code} in the $\ell_p$ metric, $1 \leq p \leq \infty$. If, in addition, this set is an additive subgroup $\Lambda$ of $\mathbb{Z}^n$, we call the corresponding tiling a \textit{lattice tiling}, and the corresponding code a \textit{linear perfect code}. We are interested in characterizing the triples $(n, r, p)$ that admit perfect codes in the $\ell_p$ metric,  $1 \leq p \leq \infty$, with a focus on non-existence results.

For $p = 1$, the existence of such tilings was first investigated by Golomb-Welch \cite{Golomb}. The so-called Golomb and Welch conjecture states that there are no tilings with parameters $(n, r, 1)$, for $n \geq 3$ and $r \geq 2$. Although there have been many advances and partial results since the work of Golomb and Welch, the general conjecture remains open (see \cite{Horak} for further references).

By considering the union of unit cubes in $\mathbb{R}^n$ centered at the points of $B_p^n(r)$ a shape called a \textit{polyomino} is produced. A tiling of $\mathbb{Z}^n$ by translates of $B_p^n(r)$ is a tiling of $\mathbb{R}^n$ by the corresponding polyominoes. We use the notation
\begin{equation} T_p^{n}(r) = \bigcup_{\xx \in B_p^n(r)}{\left( \xx + \left[\frac{-1}{2},\frac{1}{2}\right]^n\right)}, \,\, 1 \leq p \leq \infty,
\label{eq:defPol}
\end{equation}
for the polyomino in the $\ell_p$ metric associated to $B_p^n(r)$. Some polyominoes are depicted in Figure \ref{fig:poliominos}.

\begin{figure}[htb!]
\begin{minipage}[b]{0.30\linewidth}
\centering
\includegraphics[scale=0.3]{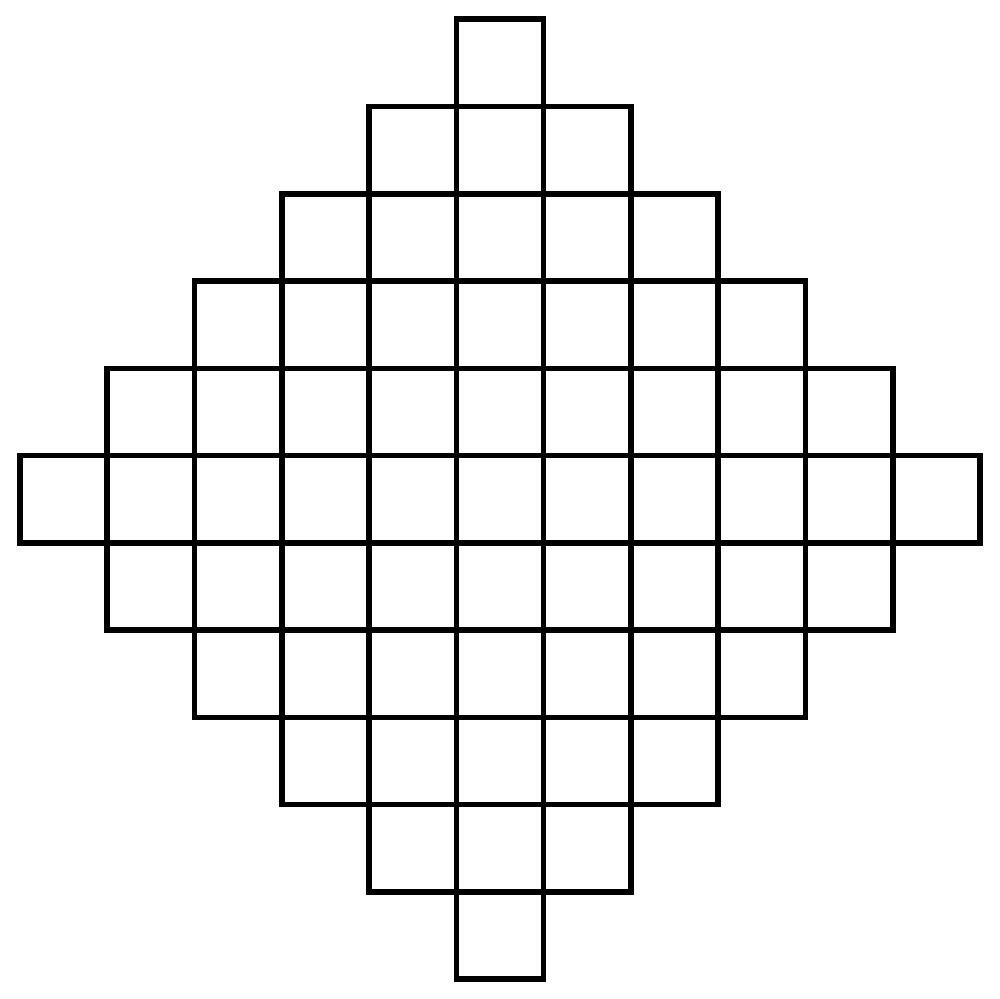}
\end{minipage}
\begin{minipage}[b]{0.30\linewidth}
\centering
\includegraphics[scale=0.3]{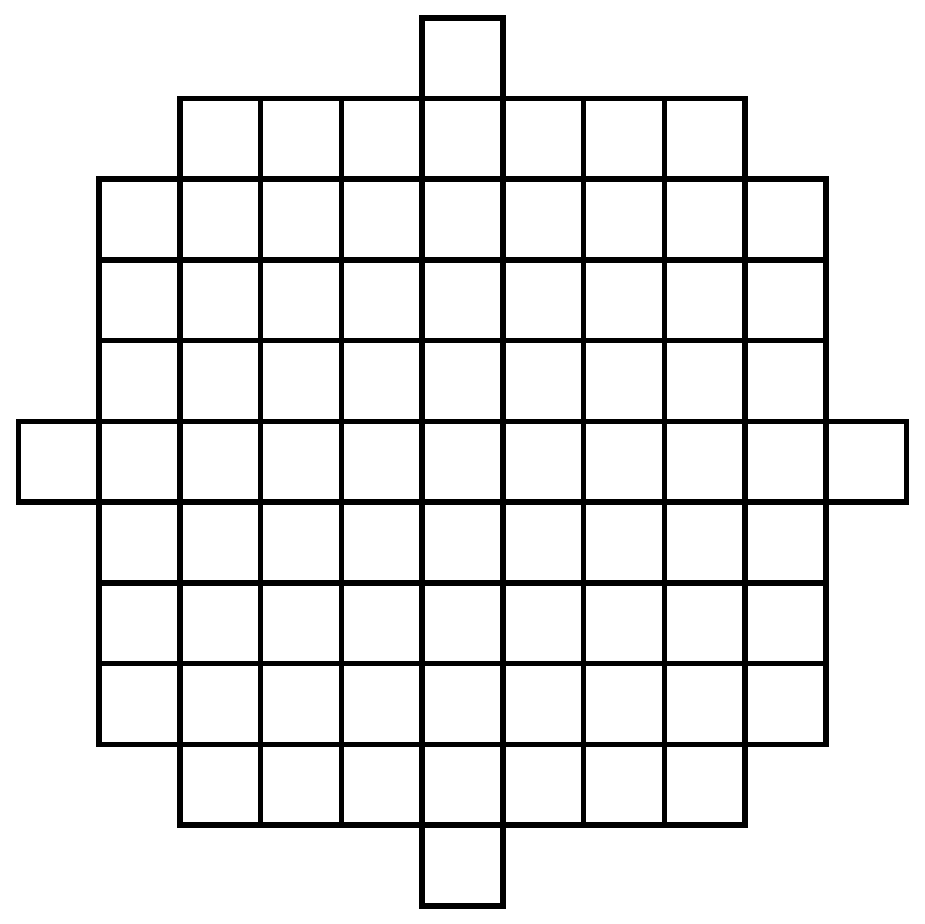}
\end{minipage}
\begin{minipage}[b]{0.30\linewidth}
\centering
\includegraphics[scale=0.3]{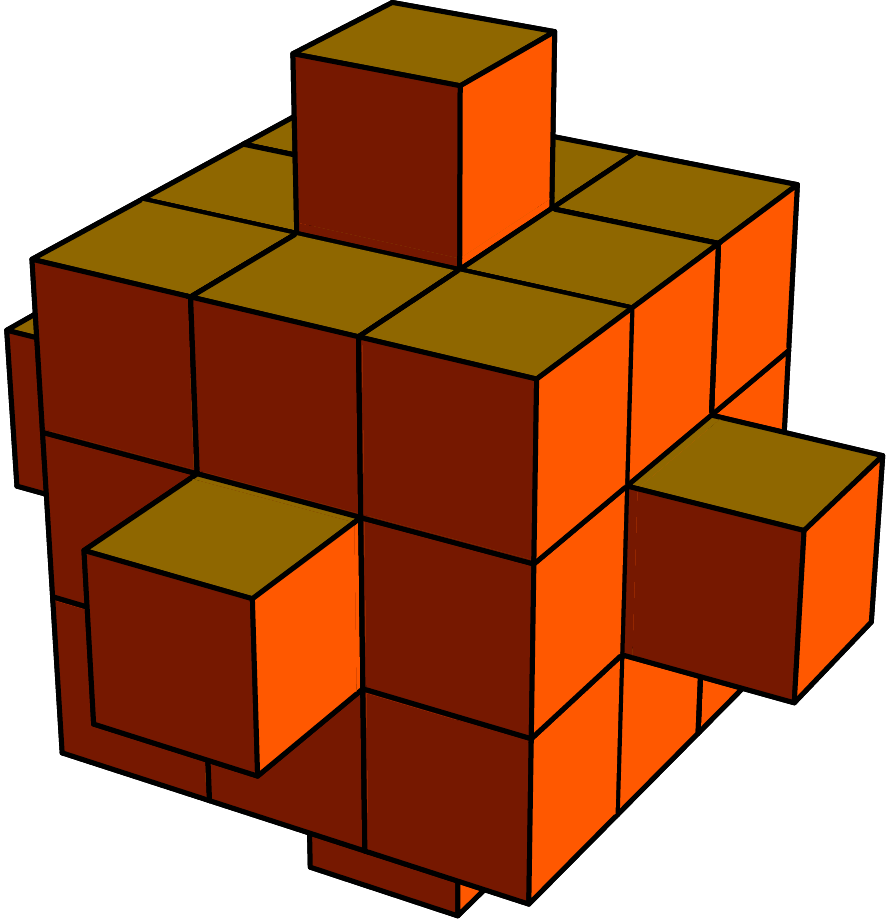}
\end{minipage}
\caption{From the left to the right: The polyominoes $T_1^{2}(5), T_2^{2}(5)$ and $T_4^{3}(2)$.}
\label{fig:poliominos}
\end{figure}

The main results of this paper are described next:

\begin{itemize}
\item[(i)] If a linear perfect code in $\mathbb{Z}^{n}$ (lattice tiling) has parameters $(n,r,p)$, $1\leq p < \infty$, then
\begin{equation}r \leq \frac{n^{1/p}}{2} \frac{\left(1+(\Delta_p^{n})^{1/n}\right)}{\left(1-(\Delta_p^{n})^{1/n}\right)},
%\label{eq:limitRadius}
\end{equation}
where $\Delta_p^{n}$ is the best packing density of a \textit{superball} \cite{Superballs} (a ball in the $\ell_p$ metric in $\mathbb{R}^n$). This means that for fixed $p$, $r = O(n^{1/p})\,\,$ (Corollary \ref{cor:nonExistenceAsymptotic}).

\item[(ii)] There are no lattice tilings with parameters $(2,r,2)$, except if $r = 1, \sqrt{2}, 2, 2\sqrt{2}$. The are no tilings with parameters $(3, r, 2)$, except if $r = 1, \sqrt{3}$ (Theorem \ref{thm:dimension2} and \ref{thm:dimension3}). 
 
\item[(iii)] For $r$ integer, a $r$-perfect code in the $\ell_{\infty}$ metric is also $n^{1/p}r$-perfect code in the $\ell_p$ if $p>\log(n)/\log(1+1/r)$, which assures the existence of the latter under this condition (Section \ref{sec:infnorm}). The same inequality is required for a tilling by cubic polyominoes in the $\ell_p$ metric (Corollary \ref{cor:perfectinfty}).
 
\item[(iv)] If $r$ is a positive integer, there is no tiling of $\mathbb{R}^{2}$ by translates of $T_{p}^{2}(r)$, $1 \leq p < \infty$, unless $r \leq 2$. Moreover, if $(n-1)(r-1)^{p}+(r-2)^{p} \leq r^{p}$, then there is no tiling of $\mathbb{R}^{n}$ by translates of $T_p^{n}(r)$, unless $r \leq 2$
(Theorem \ref{z2integer} and \ref{thm:tiledimensionn}).
%%$$\left(\frac{r}{r-1}\ri,ght)^p \leq n^{1/p},$$
%%then there are no $(n,r,p)$ tilings. This is considerably weaker than (i). However,  the \textit{tour de force} %%proof for this fact follows from simple geometric considerations and holds even for non-lattice tilings.
\end{itemize}

Perfect codes in $\mathbb{Z}^n$ in the $\ell_p$ metric are in correspondence with perfect codes over the alphabet $\mathbb{Z}_q$, when the alphabet size $q$ is sufficiently large, in the induced $p$-Lee metric (see Section \ref{sec:perfectCodesAndTilings} for precise definitions). Recently, Sol\'e and Belfiore \cite{BelfioreSole} have employed codes in the Euclidean ($2$-Lee) metric in $\mathbb{Z}_q^n$ as building blocks for asymptotically good spherical codes. They pose the existence of perfect codes in the Euclidean metric as an open question. The results (i) and (ii) above serve as a negative answer when the alphabet size and packing radius are large enough, and for low dimensions ($n = 2, 3$). We point out that the codes used in \cite{BelfioreSole} have astonishingly large alphabet sizes (see Thm. 8 of \cite{BelfioreSole}).

In a broader context, it is fair to say that tilings of $\mathbb{R}^n$ by polyominoes and their applications have been extensively studied in the literature \cite{PPP, Horak, Golomb, Schwartz}. For instance, recent studies of tilings by quasi-crosses \cite{Schwartz2, Schwartz} have applications in coding for flash memories \cite{Schwartz2}. It is worth noting that some of the group-theoretic techniques for proving non-existence of tilings by quasi-crosses in \cite{Schwartz2} are the main tools for our characterization of the $(2,r,2)$ and $(3,r,2)$ tilings in Section \ref{sec:casestudy}. This relation between group homomorphisms and tilings actually dates back to works of Stein and Szabo (see, e.g., \cite{Szabo} for a very complete description of several results in this direction). A recent work \cite{Kovacevic} applies essentially the same techniques to study connection between perfect codes (tilings) in $A_n$ lattices and difference set problems.

\section{Preliminaries}
\subsection{Codes and Lattices}
A $q$-ary \textit{code} $\mathscr{C}$ is a subset of $\mathbb{Z}_{q}^{n}$. A code $\mathscr{C} \subset \mathbb{Z}_q^{n}$ is called \textit{linear} if it is a $\mathbb{Z}_q$-submodule of $\mathbb{Z}_q^n$, $q \in \mathbb{N}$ (i.e., closed under addition). A \textit{lattice} is a $\mathbb{Z}$-submodule of $\mathbb{Z}^n$.  A lattice will be also called a linear code in $\mathbb{Z}^n$. We consider here full rank lattices in $\mathbb{Z}^n$, that is, full rank additive subgroups of $\mathbb{Z}^n$. A lattice $\Lambda$ always has a \textit{generator matrix} $B$, i.e., a full rank matrix such that $\Lambda = \left\{ \bm{x} B : \bm{x} \in \mathbb{Z}^n \right\}$. The {\it determinant} of a lattice is $\det \Lambda = |\det B|,$ which is the Euclidean volume of the parallelotope $\mathcal{P} = \left\{ \bm{\alpha} B : \bm{\alpha} \in [0,1)^n\right\}$, generated by the rows of $B$.

A linear code in $\mathbb{Z}_q^n$ can be extended to a linear code in $\mathbb{Z}^n$ using the so-called Construction A, as follows. Let $\mathscr{C} \subset \mathbb{Z}_q^{n}$ be a code, and consider the map
\begin{equation}
\begin{split}
& \,\,\,\,\,\,\,\,\,\,\,\,\,  \phi:  \mathbb{Z}^n
\longrightarrow \mathbb{Z}_q^n  \\&  (x_1,\ldots,x_n)
\longmapsto (\overline{x_1},\ldots,\overline{x_n}),
 \end{split}
\end{equation}
where $\overline{x_i} = x_i \mbox{ (mod q)}$. Take $\Lambda(\mathscr{C}) = \phi^{-1} (\mathscr{C})$. If $\mathscr{C}$ is linear, then $\Lambda(\mathscr{C})$ is a lattice such that $q \mathbb{Z}^n \subset \Lambda(\mathscr{C})$. The quotient group $\Lambda(\mathscr{C})/q\mathbb{Z}^n$ is isomorphic to $\mathscr{C}$. From this, we have $$\left| \displaystyle\frac{\Lambda
(\mathscr{C})}{q\mathbb{Z}^n} \right| = \displaystyle\frac{q^n}{\det
\Lambda(\mathscr{C}) } = |\mathscr{C}|.$$

\subsection{The $p$-Lee distance}
Codes are often studied in the literature endowed with the Hamming or Lee (Manhattan) metrics. In this work, we will consider an extension of the Lee metric in $\mathbb{Z}_q^n$, induced by the $\ell_p$ metric in $\mathbb{Z}^n$, $1 \leq p \leq \infty$. Recall that the $\ell_p$ distance between two points $\bm{x},\bm{y} \in \mathbb{Z}^n$ is defined as
\begin{equation} \label{dp} d_{p}({\bm x},{\bm y}) =
\left(\displaystyle\sum_{i=1}^n|x_i-y_i|^{p}\right)^{1/p} \mbox{ if } 1 \leq p < \infty \end{equation}
and $d_{\infty}({\bm x},{\bm y}) = \max\left\{|x_i-y_i|; \,\, i=1,\ldots,n\right\}.$
The Lee distance between two elements $\overline{x}, \overline{y} \in \mathbb{Z}_q$ is defined
\begin{equation}
\label{eq:DistanciaDeLee1} d_{Lee}(\overline{x},\overline{y}) =
\displaystyle \min \left\{(\overline{x}-\overline{y}) \,\, ( \mbox{mod
}q), (\overline{y}-\overline{x}) \,\, ( \mbox{mod
}q) \right\}.\end{equation}
For two vectors $\overline{\bm{\xx}},\overline{\bm{\yy}} \in \mathbb{Z}_q^n$, we define the \textit{$p$-Lee distance} as:
\begin{equation} d_{p,Lee}(\overline{\bm x},\overline{\bm y})=
\left(\displaystyle \sum_{i=1}^n
\left(d_{Lee}(\overline{x}_i,\overline{y}_i)\right)^{p}\right)^{1/p} \mbox{ if }  1 \leq p < \infty \label{eq:pLeeDefinition}\end{equation} and \begin{equation} d_{\infty,Lee}(\overline{\bm x},\overline{\bm y})=\max\{d_{Lee}(\overline{x}_i,\overline{y}_i),\,\, i=1,\ldots,n\}.\label{eq:inftyLeeDefinition}\end{equation}
$d_{p,Lee}$ is induced by the $\ell_p$ distance, identifying $\mathbb{Z}_q^n$ with the quotient group $\mathbb{Z}^n/q\mathbb{Z}^n$ and taking the minimum distance between two classes. More analytically, let $\overline{\bm{x}} = \phi(\bm{x})$, and $\overline{\bm{y}} = \phi(\bm{y})$. Define the induced distance as
$$d_{p,\mbox{\small ind}}(\overline{\bm{\xx}},\overline{\bm{\yy}}) = \inf\{d_{p}(\bm{x^{*}},\bm{y^{*}}) \,\, :\,\,\,{\bm x^{*}}={\bm x} + q {\bm t},\,\, \bm{y^{*}}=\bm{y}+q{\bm w}:
\,\,\, {\bm t}, {\bm w} \in \mathbb{Z}^{n}\}.$$
We have the following:

\begin{proposition}For any $\overline{\bm{\xx}}, \overline{\bm{\yy}} \in \mathbb{Z}_q^n$,
$$d_{p,Lee}(\bm{\overline{\xx}},\bm{\overline{\yy}}) = d_{p,\mbox{\small ind}}(\overline{\bm{\xx}},\overline{\bm{\yy}}),\,\, 1\leq p \leq \infty.$$
\end{proposition}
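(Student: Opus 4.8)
The plan is to reduce the statement to a one-dimensional identity in $\mathbb{Z}$ modulo $q$ and then reassemble the coordinates. First I would note that in the definition of $d_{p,\mathrm{ind}}$ only the difference $\bm{t}-\bm{w}$ of the two shift vectors matters: since $x_i + q t_i - (y_i + q w_i) = (x_i - y_i) + q(t_i - w_i)$ and $\bm{t}-\bm{w}$ ranges over all of $\mathbb{Z}^n$ as $\bm{t},\bm{w}$ do, one has $d_{p,\mathrm{ind}}(\overline{\bm{x}},\overline{\bm{y}}) = \inf_{\bm{s}\in\mathbb{Z}^n} d_p(\bm{x}+q\bm{s},\bm{y})$. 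Next, because $d_p$ is a $p$-norm (respectively a maximum) of the coordinate differences and each coordinate $s_i$ may be chosen independently, and because $u\mapsto u^{1/p}$ is strictly increasing and continuous so that the minimization can be pushed inside the $p$-th root and through the separable sum, this infimum equals $\bigl(\sum_{i=1}^n \min_{s_i\in\mathbb{Z}}|x_i-y_i+qs_i|^p\bigr)^{1/p}$ when $1\le p<\infty$, and $\max_i \min_{s_i\in\mathbb{Z}}|x_i-y_i+qs_i|$ when $p=\infty$.

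The key step is then the elementary one-dimensional identity $\min_{s\in\mathbb{Z}}|a+qs| = d_{Lee}(\overline{a},\overline{0})$ for every $a\in\mathbb{Z}$. Writing $a\equiv\rho\pmod q$ with $\rho\in\{0,1,\dots,q-1\}$, the set $\{\,|a+qs| : s\in\mathbb{Z}\,\}$ equals $\{\rho,\, q-\rho,\, q+\rho,\, 2q-\rho,\, \dots\}$, whose minimum is $\min\{\rho,q-\rho\}$, since every other element is $\ge q > \rho$ and $\ge q-\rho$; and $\min\{\rho,q-\rho\}$ is exactly $\min\{\overline{a}\bmod q,\,(-\overline{a})\bmod q\} = d_{Lee}(\overline{a},\overline{0})$. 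Applying this with $a = x_i - y_i$ gives $\min_{s_i}|x_i-y_i+qs_i| = d_{Lee}(\overline{x}_i,\overline{y}_i)$, and in passing it shows that the infimum defining $d_{p,\mathrm{ind}}$ is in fact attained (each coordinate admits only finitely many useful shifts).

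Substituting this back into the displayed formulas recovers precisely $\bigl(\sum_i d_{Lee}(\overline{x}_i,\overline{y}_i)^p\bigr)^{1/p} = d_{p,Lee}(\overline{\bm{x}},\overline{\bm{y}})$ for $1\le p<\infty$, and $\max_i d_{Lee}(\overline{x}_i,\overline{y}_i) = d_{\infty,Lee}(\overline{\bm{x}},\overline{\bm{y}})$ for $p=\infty$, which is the claim. There is no genuine obstacle here; the only points needing care are the collapse of the two shifts $\bm{t},\bm{w}$ into a single one, the interchange of the coordinatewise minimization with the strictly increasing $p$-th root and with the separable sum, and the explicit description of the orbit $\{|a+qs|\}$ — and the $\ell_\infty$ case goes through verbatim with $\max$ replacing the sum.
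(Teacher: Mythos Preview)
Your proof is correct and follows essentially the same approach as the paper: both collapse the two shifts into one, exploit the separability of the $\ell_p$ norm to minimize coordinatewise, and then verify the one-dimensional identity $\min_{s\in\mathbb{Z}}|a+qs|=d_{Lee}(\overline{a},\overline{0})$. The only cosmetic difference is that the paper phrases the one-dimensional step via the closest-integer function $s_i=\lceil (x_i-y_i)/q\rfloor$ and a case split on $\alpha_i\in\{-1,0,1\}$ (using representatives $0\le x_i,y_i<q$), whereas you describe the orbit $\{|a+qs|\}$ directly; your version is slightly cleaner and also spells out the $p=\infty$ case explicitly.
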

\begin{proof} In this proof we will only  consider the case $1 \leq p < \infty$ since the case $p=\infty$ is simpler. Let ${\overline{\bm x}} = (\overline{x_1},\ldots,\overline{x_n}),\, {\overline{\bm y}}=(\overline{y_1},\ldots,\overline{y_n})$, so that $\overline{\bm{x}} = \phi(\bm{x})$, $\overline{\bm{y}} = \phi(\bm{y})$, and $0\leq x_i,y_i < q$.
\begin{equation*}\begin{split}d_{p,\mbox{\small ind}}({\overline{\bm
x}},{\overline{\bm y}}) &= \inf\{d_{p}(\bm{x^{*}},\bm{y^{*}}):\,\,\,
{\bm x^{*}}={\bm x} + q {\bm t},\,\, \bm{y^{*}}=\bm{y}+q{\bm w}:
\,\,\, {\bm t}, {\bm w} \in \mathbb{Z}^{n}\} \\& =
\inf\left\{\left(\sum_{i=1}^{n}|x_i-y_i-q(w_i-t_i)|^{p}\right)^{\frac{1}{p}}:
\,\,\, {\bm t}, {\bm w} \in \mathbb{Z}^{n} \right\}\\&
=\left(\sum_{i=1}^{n}\left|x_i-y_i-q\left\lceil
\frac{x_i-y_i}{q}\right\rfloor\right|^{p}\right)^{\frac{1}{p}},
\end{split}\end{equation*} where the last equality follows from the fact that  $
\displaystyle\sum_{i=1}^{n}|x_i - y_i - q s_i|^{p}$ is minimum when $s_i
=\left\lceil \frac{x_i - y_i}{q}\right \rfloor$ (with ties broken to the integer with smallest absolute value) is the closest integer to $\frac{x_i - y_i}{q}$, since the summation
terms are independent.

Let $\alpha_i =  \left\lceil\frac{{x_i}-y_i}{q} \right\rfloor$ for
$i=1,\ldots,n$. For any summation term, since $0\leq |{x_i}-y_i| < q$, it follows that
$-1 < \frac{{x_i}-y_i}{q} < 1$ and $\alpha_i \in \{-1,0,1\}.$
\begin{itemize}
 \item If $\alpha_i = 0$ for some
$i,$ then $-q/2 \leq {x_i}-y_i \leq q/2$ and this implies
$\min\{|x_i-y_i|,q-|x_i-y_i|\} = |x_i-y_i|.$  \item If $\alpha_i =
1$ for some $i,$ then $q/2 < {x_i}-y_i < q $ and then
$\min\{|x_i-y_i|,q-|x_i-y_i|\} = q -|x_i-y_i| \mbox { and }
|x_i-y_i| = x_i-y_i.$ \item If $\alpha_i = -1$ for some $i$, then
$-q < {x_i}-y_i < -q/2$ and then $\min\{|x_i-y_i|,q-|x_i-y_i|\} =
q - |x_i-y_i| \mbox{ and } |x_i-y_i| = -(x_i-y_i).$
\end{itemize} Therefore we can assert $\left|x_i-y_i-q \alpha_i\right|=d_{Lee}(\overline{x_i},\overline{y_i})$, for all $i=1,\ldots,n$, and then
$d_{p,\mbox{\small ind}}({\overline{\bm
x}},{\overline{\bm y}})$ $ = \left(\displaystyle \sum_{i=1}^n
\left(d_{Lee}(\overline{x}_i,\overline{y}_i)\right)^{p}\right)^{1/p}.$
\end{proof}

%The closed ball in the $\ell_p$ (resp. $p$-Lee) distance, centered at $\xx \in \mathbb{Z}^n$ (resp. $\bm{\xx} \in \mathbb{Z}_q^n$), of radius $r$ is denoted by $B_{p}^n(\xx,r)$ (resp. $B_{p,Lee}^n(\bm{\xx},r)$).

In what follows we will denote also $d_{p,Lee}$ as $d_p$ and $B_p^n(\bm{x},r)$ will be used for the closed ball either in $\mathbb{Z}^n$ or in $\mathbb{Z}_q^n$ centered at $\bm{x}$ with radius $r$.

The \textit{minimum distance} $d_p(\mathscr{C})$ of a code $\mathscr{C}$ in $\mathbb{Z}^{n}$ or $\mathbb{Z}_q^{n}$ is defined as $$d_{p}(\mathscr{C}) = \min_{{{\bm{\xx}},{\bm{\yy}} \in \mathscr{C} \above 0pt \xx\neq\yy}} d_{p}({\bm{\xx}},{\bm{\yy}}).$$ The minimum distance of a lattice $\Lambda$, $d_p(\Lambda)$, is the $p$-norm of a shortest nonzero vector.

\section{Perfect Codes and Tilings}
\label{sec:perfectCodesAndTilings}
A code $\mathscr{C}$ in  $\mathbb{Z}_q^n$ ($\mathbb{Z}^{n}$) is called \textit{perfect} if, for some $r > 0$, it satisfies the property that, for any $\bm{z} \in \mathbb{Z}_q^n$ ($\mathbb{Z}^{n}$), there exists only one ${\bm{x}} \in \mathscr{C}$ such that $d_{p}({\bm{x}},{\bm{z}}) \leq r$. In other words, the balls centered at codewords are disjoint and cover $\mathbb{Z}_q^n$ ($\mathbb{Z}^{n}$).

% A completely analogous concept is a \textit{tiling} in $\mathbb{Z}^n$. We say that the balls $B_{p}^n(\bm{x},r)$ tile $\mathbb{Z}^n$ if condition (i) holds for any $\bm{z} \in \mathbb{Z}^n$, making the obvious adjustments. Such $\Lambda$ is called a perfect code in $\mathbb{Z}^n$ in the $\ell_p$ metric. To each ball in $\mathbb{R}^n$, one can associate a \textit{polyomino} by considering cubes in $\mathbb{R}^n$ of side $1$ centered at the integer points in $B_{p}^n(\bm{0},r)$. This shape is called a \textit{polyomino}. Analytically, a polyomino in the $\ell_p$ norm of radius $r$ is given by
%$$T_p(n,r) = \bigcup_{\xx \in B_p^n(\bm{0},r)}{\left( \xx + \left[\frac{-1}{2},\frac{1}{2}\right]^n\right)}.$$
%It is clear that the interior-disjoint union of translates of $T_p(n,r)$ by points of $\Lambda$ tile $\mathbb{R}^n$ iff the translates of $B_p^n(\bm{0},r)$ tile $\mathbb{Z}^n$. Therefore a lattice $\Lambda$ which is also a perfect code induces a tiling of $\mathbb{R}^n$ by polyominoes. We shall move freely between both characterizations throughout the rest of the paper. Some pictures of polyominoes can be find in Figure \ref{fig:poliominos}. For $p = 1$, these are Lee spheres as in \cite{Golomb} and for $r = 1$ and $p < \infty$ these are $(1,n)$-crosses (or Lee spheres of radius $1$), as in \cite{Schwartz}.

\subsection{The Packing Radius}

In order to study perfect codes, we need the concept of packing radius of a code. In the classic Lee metric, the packing radius $r_1=r_1(\mathcal{\mathscr{C}})$ of a code $\mathscr{C}$ is defined as the largest integer $r$ such that the balls of radius $r$ centered at codewords are disjoint. This is a natural definition, since the distance between two points in such metric is always an integer. The extension of this definition to general $p$-metrics requires further discussion.

First note that for $1 \leq p <\infty$, $p \in \mathbb{N}$, the $p$-Lee distance between two vectors in $\mathbb{Z}_q^n$, as well as the $\ell_p$ distance in $\mathbb{Z}^n$, is always the $p$-th root of an integer. Hence, for any two vectors $\xx, \yy$, we have $d_{p}(\xx,\yy)^p \in \mathbb{N}$. However not all $p$-th roots are achievable, as shown in the next example.

\begin{example} Let $p = 2$, $n = 2$. There is no pair of vectors $\overline{\xx},\overline{\yy} \in \mathbb{Z}_q^2$ such that $d_{2}(\overline{\xx},\overline{\yy}) = \sqrt{3}$. Hence $B_{2}^{2}(\sqrt{2}) = B_{2}^{2}(\sqrt{3})$.
\end{example}

Therefore, in order to define the packing radius, we need to look for the largest value \textit{within the set of achievable distances} such that the balls centered at codewords are disjoint. We thus define the \textit{distance set} of the $p$-Lee metric in $\mathbb{Z}_q^n$ as the set $\mathcal{D}_{p,n,q}$ of all achievable distances. Analogously, we define the set $\mathcal{D}_{p,n}$ of achievable distances by the $\ell_p$ metric in $\mathbb{Z}^n$. It follows that $\mathcal{D}_{p,n} \subset \left\{0, 1^{1/p}, 2^{1/p}, 3^{1/p},\ldots \right\},$ $1 \leq p < \infty,$ and $\mathcal{D}_{\infty,n} = \mathbb{N}$. The following number theoretic considerations concerning the distance set are useful:

\begin{proposition}\label{thm:distanceSet} Let $\mathcal{D}_{2,n}$ be a distance set as defined above. A number ${r \in \left\{0, 1^{1/2}, 2^{1/2}, 3^{1/2},\ldots \right\}}$ is in $\mathcal{D}_{2,n}$ if and only if
\begin{enumerate}
\item $n = 2$ and $r^2$ can be written as $r^2 = ab^2$, where $a$ has no prime factor congruent to $3$, modulo $4$, or
\item $n = 3$ and $r^2$ is not of the form $4^m(8k+7)$ for $m, k \in \mathbb{N}$, or
\item $n \geq 4$.
\end{enumerate}
\end{proposition}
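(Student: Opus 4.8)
The plan is to reduce the proposition to three classical theorems on representing integers as sums of squares. The starting point is that $\mathcal{D}_{2,n}$ depends only on the differences $\xx-\yy$: since $d_2$ is translation invariant, $d_2(\xx,\yy)=d_2(\xx-\yy,\bm{0})$, so
$\mathcal{D}_{2,n}=\{\,(\sum_{i=1}^n v_i^2)^{1/2}:\vv\in\mathbb{Z}^n\,\}$. Consequently, a number $r$ with $r^2=m\in\mathbb{N}$ lies in $\mathcal{D}_{2,n}$ if and only if $m$ admits a representation $m=x_1^2+\dots+x_n^2$ with $x_1,\dots,x_n\in\mathbb{Z}$ (some of the $x_i$ possibly zero, i.e.\ $m$ is a sum of \emph{at most} $n$ nonzero squares). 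So the three cases of the proposition are exactly the descriptions of the integers that are sums of $n$ squares, for $n\ge 4$, $n=3$, and $n=2$, respectively.

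For item (3), $n\ge 4$: by Lagrange's four-square theorem every nonnegative integer is a sum of four squares, and padding with zeros shows it is a sum of $n$ squares for every $n\ge 4$. Hence $\mathcal{D}_{2,n}=\{0,1^{1/2},2^{1/2},3^{1/2},\dots\}$ and every admissible $r$ occurs, which is the assertion.

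For item (2), $n=3$: this is the Gauss--Legendre three-square theorem: a nonnegative integer is a sum of three squares if and only if it is not of the form $4^m(8k+7)$. The easy direction — that $4^m(8k+7)$ is not a sum of three squares — follows by reducing modulo $8$ (the squares mod $8$ are $0,1,4$, whose triple sums never equal $7$) together with an infinite-descent step for the factor $4^m$ (if $4\mid x^2+y^2+z^2$ then $x,y,z$ are all even, and one divides out a factor $4$). The converse is the deep direction; I would simply cite it (standard proofs use ternary quadratic forms, or Dirichlet's theorem on primes in arithmetic progressions), and this is the main obstacle if one insists on a self-contained argument.

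For item (1), $n=2$: by Fermat's two-square theorem a positive integer $m$ is a sum of two squares if and only if every prime $q\equiv 3\pmod 4$ occurs to an even power in the factorization of $m$. It remains only to check that this is equivalent to the stated condition that $m=ab^2$ with $a$ having no prime factor $\equiv 3\pmod 4$: writing $m=\prod_i p_i^{a_i}\prod_j q_j^{2b_j}$ with $p_i\not\equiv 3$ and $q_j\equiv 3\pmod 4$, one takes $b=\prod_j q_j^{b_j}$ and $a=\prod_i p_i^{a_i}$; conversely, in any product $ab^2$ with $a$ as prescribed, the primes $\equiv 3\pmod 4$ arise only from $b^2$ and hence appear to even powers. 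This completes item (1), and with it the proposition.
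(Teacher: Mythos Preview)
Your proof is correct and follows exactly the same approach as the paper: reduce membership in $\mathcal{D}_{2,n}$ to the question of which nonnegative integers are sums of $n$ squares, and then invoke the classical two-, three-, and four-square theorems. The paper's own proof is a one-line appeal to these theorems (citing a standard number-theory text), so your version is simply a more detailed elaboration of the same argument, including the verification that the $ab^2$ formulation in item~(1) matches the usual ``primes $\equiv 3\pmod 4$ to even exponent'' criterion.
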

\begin{proof} The proof follows by observing that $r \in \mathcal{D}_{2,n}$  if and only if can be written as the sum of $n$ squares, and further applying the corresponding sum of squares theorem (see, e.g., \cite{Niven}).
\end{proof}
There is also a natural connection between $\mathcal{D}_{p,n}$ and Waring's problem (e.g. \cite{SurveyWaring}). Waring's problem asks for the smallest integer $k$ such that all $m \in \mathbb{N}$ can be written as $m = x_1^p + \ldots + x_k^p$, where $x_i \in \mathbb{N}$. This problem is well defined, in the sense that such $k$ always exists. Let $g(p)$ be the solution to Waring's problem, given $p > 1$. It follows directly that, if $n \geq g(p)$, then $\mathcal{D}_{p,n} = \left\{0, 1^{1/p},2^{1/p},\ldots \right\}$ i.e., all $p$-th roots of integers are achievable. In the survey \cite{SurveyWaring}, bounds on $g(p)$ are presented. In particular, it is shown that $g(2) = 4, g(3) = 9, g(14) = 19$. It is conjectured (and proved for $p \leq 471,600,000$) that $g(p) = 2^{p} + \left\lfloor (3/2)^p \right\rfloor -2$.

The \textit{packing radius} of a code $\mathscr{C} \subset \mathbb{Z}_q^{n}$ is thus defined as the largest $r \in \mathcal{D}_{p,n,q}$ such that $B_{p}^n(\overline{\bm{\xx}},r) \cap B_{p}^n(\overline{\bm{\yy}},r) = \emptyset$ holds for all $\overline{\xx}, \overline{\yy} \in \mathscr{C}$. Analogously, the packing radius\footnote{In the literature, the term \textit{packing radius} is usually related to the Euclidean packing radius of a lattice  in $\mathbb{R}^n$ (i.e., the maximum radius such that open Euclidean balls centered at lattice points are disjoint). We say more about this in Section 5.} of a code $\mathscr{C} \subset \mathbb{Z}^{n}$ is the largest $r \in \mathcal{D}_{p,n}$ such that $B_{p}^n(\bm{\xx},r) \cap B_{p}^n(\bm{\yy},r) = \emptyset$ holds for all $\xx, \yy \in \mathscr{C}$. The packing radius of a code $\mathscr{C}$ (in $\mathbb{Z}^{n}$ or $\mathbb{Z}_q^{n}$) in the $\ell_p$ metric will be denoted by $r_p(\mathscr{C})$.

For $p = 1$, it is well known that the packing radius of a code $\mathscr{C}$ (in $\mathbb{Z}^n$ or in $\mathbb{Z}_q^n$ ) is given by the formula
$r_1(\mathscr{C}) = \left\lfloor \frac{d_{1}(\mathscr{C})-1}{2} \right\rfloor.$ For $p=\infty$, it is also the case that $r_{\infty}(\mathscr{C}) =  \left\lfloor \frac{d_{\infty}(\mathscr{C})-1}{2} \right\rfloor.$ However, this is not true for $1 < p < \infty$. In fact, as will be shown later, two codes with same minimum distance may have different packing radii, thus the packing radius is not uniquely determined by the minimum distance.

A perfect code $\mathscr{C}$ with packing radius $r = r_p(\mathscr{C})$ is also denominated an \textit{$r$-perfect code}.
%
%\begin{theorem}
%If $1 < p < \infty$ there exists two codes $C_1, C_2 \subset \mathbb{Z}^n$ such that
%\end{theorem}
%
%\begin{proof} ($\Rightarrow$) If $1 < p < \infty$, let $C_1 = \left\{(0,\ldots,0),(\overbrace{1,1,\ldots,1}^{n,0,\ldots,0)\right\}$,
%\end{proof}
\subsection{Large Alphabet versus Small Alphabet}

Due to applications in coding theory, the set $\mathbb{Z}_q$ is also denominated an alphabet (and $q$ is the alphabet size). There is a big difference between large alphabets and small alphabets in terms of the search for perfect codes. If the alphabet size $q$ is large enough, then perfect linear codes in $\mathbb{Z}_q^n$ induce perfect linear codes in $\mathbb{Z}^n$ through Construction A. On the other hand, if $q$ is small this need not be the case, as shown in the next example.

\begin{example} The binary code $\mathcal{\mathscr{C}}=\{(\overline{0},\overline{0},\overline{0},\overline{0},\overline{0},\overline{0},\overline{0}),(\overline{1},
\overline{1},\overline{1},\overline{1},\overline{1},\overline{1},\overline{1})\}
\subset \mathbb{Z}_2^{7}$ is perfect in the Lee (Hamming) metric, but the associated  lattice $\Lambda(\mathcal{\mathscr{C}})$ is not a perfect code in $\mathbb{Z}^{7}$.
\end{example}

For large enough alphabet size ($q \geq d_{p}(\mathscr{C})$), a code and its associated lattice  $\Lambda(\mathcal{\mathscr{C}})$ have same distance \cite{Superballs}:
\begin{equation}
d_p(\Lambda(\mathscr{C})) = \min\left\{d_{p}(\mathscr{C}),q\right\}.
\end{equation}
A related result is the following:

\begin{proposition} Let $\mathscr{C} \subset \mathbb{Z}_q^n$ be a linear code with packing radius $r_p=r_p(\mathscr{C})$. If $2 r_p < q$, then $\Lambda(\mathscr{C}) \subset \mathbb{Z}^n$ is a code with packing radius $r_p$ in the $\ell_p$ metric.
\label{thm:samePackingRadius}
\end{proposition}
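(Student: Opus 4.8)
The plan is to prove the two inequalities $r_p(\Lambda(\mathscr{C})) \ge r_p$ and $r_p(\Lambda(\mathscr{C})) \le r_p$ separately, both resting on two elementary facts about the reduction map $\phi \colon \mathbb{Z}^n \to \mathbb{Z}_q^n$: (1) it is $1$-Lipschitz, $d_{p,Lee}(\phi(\bm{x}),\phi(\bm{y})) \le d_p(\bm{x},\bm{y})$, which is immediate from the ``induced distance'' description proved above; and (2) given $\bm{z}\in\mathbb{Z}^n$ and a class $\bar{\bm{w}}\in\mathbb{Z}_q^n$, one can lift $\bar{\bm{w}}$ to a representative $\bm{w}\in\phi^{-1}(\bar{\bm{w}})$ with $d_p(\bm{z},\bm{w})=d_{p,Lee}(\phi(\bm{z}),\bar{\bm{w}})$ (choose each coordinate of $\bm{w}-\bm{z}$ in $(-q/2,q/2]$). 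By linearity of $\Lambda(\mathscr{C})$ and of $\mathscr{C}$, checking disjointness of radius-$r$ balls reduces in both settings to checking $B_p^n(\bm{0},r)\cap B_p^n(\bm{v},r)=\emptyset$ for every nonzero $\bm{v}$ in the code.

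For $r_p(\Lambda(\mathscr{C})) \ge r_p$: I claim the radius-$r_p$ balls about points of $\Lambda(\mathscr{C})$ are pairwise disjoint in $\mathbb{Z}^n$. If not, some $\bm{z}\in\mathbb{Z}^n$ has $\|\bm{z}\|_p\le r_p$ and $\|\bm{z}-\bm{v}\|_p\le r_p$ for a nonzero $\bm{v}\in\Lambda(\mathscr{C})$. Then $\|\bm{v}\|_p\le 2r_p<q$; since every nonzero vector of $q\mathbb{Z}^n$ has $\ell_p$-norm at least $q$, we get $\bm{v}\notin q\mathbb{Z}^n$, so $\bar{\bm{v}}:=\phi(\bm{v})$ is a \emph{nonzero} codeword of $\mathscr{C}$. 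Applying (1) to $\bm{z},\bm{0}$ and to $\bm{z},\bm{v}$ shows $\phi(\bm{z})\in B_p^n(\bm{0},r_p)\cap B_p^n(\bar{\bm{v}},r_p)$ inside $\mathbb{Z}_q^n$, contradicting that $r_p$ is the packing radius of $\mathscr{C}$. This is the only place the hypothesis $2r_p<q$ is used, and since $r_p\in\mathcal{D}_{p,n,q}\subseteq\mathcal{D}_{p,n}$, it follows that $r_p(\Lambda(\mathscr{C}))\ge r_p$.

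For $r_p(\Lambda(\mathscr{C})) \le r_p$: let $r\in\mathcal{D}_{p,n}$ be any radius for which the radius-$r$ balls about points of $\Lambda(\mathscr{C})$ are pairwise disjoint in $\mathbb{Z}^n$. Taking $\bm{v}=q\bm{e}_1\in q\mathbb{Z}^n\subseteq\Lambda(\mathscr{C})$ and $\bm{z}=(\lceil q/2\rceil,0,\ldots,0)$, which satisfy $\|\bm{z}\|_p=\lceil q/2\rceil$ and $\|\bm{z}-q\bm{e}_1\|_p=\lfloor q/2\rfloor$, disjointness forces $r<\lceil q/2\rceil$. Hence in any representation $r^p=\sum_{i=1}^n k_i^p$ with $k_i\in\mathbb{N}$ every summand obeys $k_i^p\le r^p<\lceil q/2\rceil^p$, so $k_i\le\lfloor q/2\rfloor$; then $\bm{y}=(k_1,\ldots,k_n)$ has $d_{p,Lee}(\phi(\bm{0}),\phi(\bm{y}))=r$, i.e.\ $r\in\mathcal{D}_{p,n,q}$. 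Finally, if the radius-$r$ balls about codewords of $\mathscr{C}$ were \emph{not} disjoint in $\mathbb{Z}_q^n$, then lifting an intersection point and the offending codeword via (2) would yield a nonzero $\bm{v}\in\Lambda(\mathscr{C})$ and an integer point in $B_p^n(\bm{0},r)\cap B_p^n(\bm{v},r)$, a contradiction; so those balls are disjoint, and since $r\in\mathcal{D}_{p,n,q}$ and $r_p$ is by definition the largest such radius, $r\le r_p$. Combining the two parts gives $r_p(\Lambda(\mathscr{C}))=r_p$.

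The step I expect to need the most care is the middle of the ``$\le$'' argument: reconciling the two distance sets $\mathcal{D}_{p,n}$ and $\mathcal{D}_{p,n,q}$, so that the packing radius of $\Lambda(\mathscr{C})$ --- a priori only an element of $\mathcal{D}_{p,n}$ --- is certified to be an \emph{achievable} $p$-Lee distance in $\mathbb{Z}_q^n$ and can legitimately be compared with $r_p$. The plan handles this by using $q\mathbb{Z}^n\subseteq\Lambda(\mathscr{C})$ to pin that radius below $\lceil q/2\rceil$ and then observing that no $p$-th power strictly below $\lceil q/2\rceil^p$ can ``wrap around'' modulo $q$, so that $\mathcal{D}_{p,n}$ and $\mathcal{D}_{p,n,q}$ coincide on $[0,\lceil q/2\rceil)$. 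Everything else --- the lifting and projection manipulations --- is routine once facts (1) and (2) are in place and one unwinds the definition of packing radius.
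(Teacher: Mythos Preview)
Your argument is correct. For the inequality $r_p(\Lambda(\mathscr{C})) \ge r_p$ you follow essentially the same path as the paper: reduce modulo $q$, use that $\phi$ is $1$-Lipschitz, and invoke $2r_p<q$ to ensure the offending lattice vector does not lie in $q\mathbb{Z}^n$ (the paper phrases this via two codewords and the triangle inequality rather than via linearity, but the content is identical).

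Where you differ is in the reverse inequality. The paper dispatches $r_p(\Lambda(\mathscr{C})) \le r_p(\mathscr{C})$ in a single sentence (``It is clear that\ldots''), implicitly relying on the lifting property you call fact~(2). You instead give a full argument: bound any admissible radius below $\lceil q/2\rceil$ using $q\bm{e}_1\in\Lambda(\mathscr{C})$, deduce that such a radius lies in $\mathcal{D}_{p,n,q}$ and not merely in $\mathcal{D}_{p,n}$, and then lift a hypothetical overlap in $\mathbb{Z}_q^n$ back to $\mathbb{Z}^n$. This extra care is justified---the two packing radii are defined over different distance sets, and the paper does not explain why they may be compared directly---so your version actually fills a small gap. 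As a side benefit, your proof of the $\le$ direction never uses the hypothesis $2r_p<q$; that inequality is needed only for $\ge$.
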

\begin{proof}
Let $r_p(\Lambda(\mathscr{C}))$ be the packing radius of $\Lambda(\mathscr{C})$. It is clear that $r_p(\Lambda(\mathscr{C}))$ cannot exceed $r_p(\mathscr{C})$. We prove that $r_p(\Lambda(\mathscr{C}))$ is indeed equal $r_p(\mathscr{C})$. Suppose that there exists $\xx \in \mathbb{Z}^n$ and two elements $\bm{c},\bm{c}' \in \Lambda(\mathscr{C})$ such that $d_p(\xx,\bm{c}) \leq r_p(\mathscr{C})$ and $d_p(\xx, \bm{c}') \leq r_p(\mathscr{C})$. By reducing the three vectors modulo $q$, we have $d_p(\overline{\bm{c}},\overline{\xx}) \leq r_p(\mathscr{C})$ and $d_p(\overline{\bm{c}}',\overline{\xx}) \leq r_p(\mathscr{C})$, with $\overline{\bm{c}}, \overline{\bm{c}}' \in {\mathscr{C}}$. This implies that $\overline{\bm{c}} = \overline{\bm{c}}'$, i.e., $\bm{c} = \bm{c}' + q \bm{w}$, $\bm{w} \in \mathbb{Z}^n$ and $d_p(\bm{c}, \bm{c}') = q d_p(\bm{w},\bm{0})$. But
$$d_p(\bm{c}, \bm{c}') \leq d_p(\bm{c}, \xx) + d_p(\xx, \bm{c}') \leq 2r_p(\mathscr{C}) < q,$$
therefore $\bm{w} = \bm{0}$ and $\bm{c} = \bm{c}'$. This proves that the balls of radius $r_p(\mathscr{C})$ centered at codewords are disjoint. Therefore $r_p(\Lambda(\mathscr{C})) \geq r(\mathscr{C})$, concluding the proof.
\end{proof}

\begin{corollary}
\label{cor:samePackingRadius} If ${\mathscr{C}} \subset \mathbb{Z}_q^n$ is a linear perfect code with packing radius $r_p = r_p(\mathscr{C})$ such that  $2r_p < q$, then $\Lambda(\mathscr{C}) \subset \mathbb{Z}^n$ is a $r_p$-perfect code in the $\ell_p$ metric.
\end{corollary}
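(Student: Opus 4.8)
The plan is to derive the corollary directly from Proposition~\ref{thm:samePackingRadius} together with the definition of a perfect code, so essentially no new work is required. First I would recall that by hypothesis $\mathscr{C} \subset \mathbb{Z}_q^n$ is a linear \emph{perfect} code with packing radius $r_p$ and $2r_p < q$. Since the hypotheses of Proposition~\ref{thm:samePackingRadius} are exactly $2r_p < q$ and linearity, that proposition immediately gives that $\Lambda(\mathscr{C}) \subset \mathbb{Z}^n$ is a code whose packing radius in the $\ell_p$ metric is again $r_p$. So the only thing left to check is that the covering property is also inherited: that every point of $\mathbb{Z}^n$ lies within $\ell_p$-distance $r_p$ of some codeword of $\Lambda(\mathscr{C})$.

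The key step is the covering argument, which I would carry out as follows. Fix $\xx \in \mathbb{Z}^n$ and let $\overline{\xx} = \phi(\xx) \in \mathbb{Z}_q^n$. Since $\mathscr{C}$ is perfect with packing radius $r_p$, there is a codeword $\overline{\cc} \in \mathscr{C}$ with $d_{p,Lee}(\overline{\xx},\overline{\cc}) \leq r_p$. By the Proposition in Section~2.2 identifying $d_{p,Lee}$ with the induced distance $d_{p,\mbox{\small ind}}$, this means there exist representatives $\xx^{*} \equiv \xx$ and $\cc^{*} \equiv \cc$ modulo $q\mathbb{Z}^n$ with $d_p(\xx^{*},\cc^{*}) \leq r_p$ (the infimum in the definition of $d_{p,\mbox{\small ind}}$ is attained because each coordinatewise minimization is over a discrete set). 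Translating by an element of $q\mathbb{Z}^n \subset \Lambda(\mathscr{C})$, we may take $\xx^{*} = \xx$; then $\cc^{*} \in \Lambda(\mathscr{C})$ since $\Lambda(\mathscr{C})$ is closed under adding elements of $q\mathbb{Z}^n$ and $\cc^{*} \equiv \cc \pmod{q\mathbb{Z}^n}$ with $\cc \in \mathscr{C}$ lifting into $\Lambda(\mathscr{C})$. Hence $\xx$ is covered by the ball of radius $r_p$ around $\cc^{*} \in \Lambda(\mathscr{C})$.

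Combining the two facts, the balls $B_p^n(\cc,r_p)$, $\cc \in \Lambda(\mathscr{C})$, are pairwise disjoint (packing radius $r_p$) and cover $\mathbb{Z}^n$, so $\Lambda(\mathscr{C})$ is an $r_p$-perfect code in the $\ell_p$ metric, as claimed. The main (and really only) subtlety is being careful about the translation bookkeeping between $\mathbb{Z}_q^n$ and $\mathbb{Z}^n$ via Construction~A and the induced-distance identification — once that is set up, there is no obstacle. One could also note that this corollary is the natural companion to Proposition~\ref{thm:samePackingRadius}: that proposition transfers the packing property, and the covering property transfers for free because Construction~A is surjective onto $\mathbb{Z}_q^n$ and an $\ell_p$-ball in $\mathbb{Z}_q^n$ is precisely the image of the corresponding $\ell_p$-ball in $\mathbb{Z}^n$.
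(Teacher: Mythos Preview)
Your proof is correct and follows the same approach the paper intends: the corollary is stated without proof, as an immediate consequence of Proposition~\ref{thm:samePackingRadius}, and you have simply made explicit the covering step (via the identification $d_{p,Lee}=d_{p,\mathrm{ind}}$ and Construction~A) that the paper leaves to the reader.
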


\section{On Perfect Codes in the $\ell_\infty$ and $\ell_p$ metric}
\label{sec:infnorm}
The polyominoes $T_{\infty}^{n}(r)$ (recall Eq. \ref{eq:defPol}) are cubes, thus clearly $T_{\infty}^{n}(r)$ tiles $\mathbb{R}^n$ for any $r \in \mathbb{N}$. Over finite alphabets, we have the following characterization:

\begin{proposition} \label{necessary} There are non-trivial perfect codes $\mathscr{C} \subset \mathbb{Z}_q^n$ in the $\ell_{\infty}$ metric iff $q = b m$ with $b > 1$ an odd integer and $m > 1$ an integer.
\end{proposition}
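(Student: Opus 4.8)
The plan is to characterize when a ball $B_\infty^n(r)$ tiles $\mathbb{Z}_q^n$ by translates of a linear code, reducing everything to the one-dimensional situation thanks to the product structure of the $\ell_\infty$ ball: $B_\infty^n(r) = B_\infty^1(r)^n$, a cube of side $2r+1$. First I would observe that a perfect code in the $\ell_\infty$ metric in $\mathbb{Z}_q^n$ with packing radius $r$ requires $|\mathscr{C}| \cdot (2r+1)^n = q^n$, so in particular $2r+1$ must divide $q$; write $q = (2r+1)m$. If $r = 0$ the code is trivial (all of $\mathbb{Z}_q^n$), which is the degenerate case, so assume $r \geq 1$, hence $b := 2r+1 > 1$ is odd. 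It remains to show that a non-trivial such code exists if and only if $m > 1$, i.e. if and only if $q$ is not equal to $2r+1$ itself, and more precisely to pin down that $q = bm$ with $b$ odd $>1$ and $m > 1$ is exactly the condition.

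For the \emph{if} direction, given $q = bm$ with $b = 2r+1$ odd and $m>1$, I would exhibit an explicit perfect code: take the one-dimensional code $\mathscr{C}_1 = b\mathbb{Z}_q = \{0, b, 2b, \ldots, (m-1)b\} \subset \mathbb{Z}_q$, which is $r$-perfect in the $\ell_\infty$ (= Lee, in dimension one) metric on $\mathbb{Z}_q$ because consecutive codewords are at Lee distance $b = 2r+1$ and the balls of radius $r$ around them partition $\mathbb{Z}_q$ (this uses $m \geq 2$ so that the code is genuinely a subgroup covering $\mathbb{Z}_q$ with no wraparound collisions; when $m=1$ the ``code'' is just $\{0\}$ and its ball is all of $\mathbb{Z}_q$, which is the trivial/degenerate code excluded by hypothesis on $b$). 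Actually one must be slightly careful: any odd $b \mid q$ gives such a 1-D perfect code, but to get packing radius exactly $r$ one needs $b = 2r+1$; the point for the proposition is existence of \emph{some} non-trivial perfect code, so I would phrase it as: for each odd $b>1$ dividing $q$ with $q/b > 1$, the code $b\mathbb{Z}_q$ is non-trivial and $\frac{b-1}{2}$-perfect. Then the $n$-fold product $\mathscr{C} = (b\mathbb{Z}_q)^n \subset \mathbb{Z}_q^n$ is linear and, since the $\ell_\infty$ ball is a product of intervals, it is perfect with packing radius $\frac{b-1}{2}$; it is non-trivial because $|\mathscr{C}| = m^n$ with $1 < m < q$.

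For the \emph{only if} direction, suppose $\mathscr{C} \subset \mathbb{Z}_q^n$ is a non-trivial perfect code in the $\ell_\infty$ metric with packing radius $r \geq 1$. The sphere-packing identity $|\mathscr{C}|(2r+1)^n = q^n$ forces $(2r+1)^n \mid q^n$; since $2r+1$ is odd, I would conclude $2r+1 \mid q$ — here the cleanest route is to note that for each prime $\ell$, $v_\ell(2r+1) \leq v_\ell(q)$ (valuations), which follows from $n\, v_\ell(2r+1) \leq n\, v_\ell(q)$. Set $b = 2r+1$, an odd integer $>1$ since $r\ge 1$, and $m = q/b$. Finally I must rule out $m = 1$: if $m=1$ then $q = 2r+1$, and a ball $B_\infty^n(r)$ in $\mathbb{Z}_q^n$ with $q = 2r+1$ is \emph{all} of $\mathbb{Z}_q^n$ (every coordinate is within Lee-distance $r$ of any fixed value when the modulus is $2r+1$), so the only perfect code is $\{0\}$ (or a single-codeword translate), which is trivial — contradicting non-triviality. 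Hence $m>1$, giving $q = bm$ with $b>1$ odd and $m>1$, as claimed.

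The main obstacle, though it is more bookkeeping than a genuine difficulty, is making the notion of ``packing radius'' in the $\ell_\infty$ metric interact correctly with the modular wraparound: one has to be careful that when $q = 2r+1$ the ball degenerates to the whole space (so the "perfect code" is trivial), and to verify cleanly that in the non-degenerate regime $q \geq 2r+2$ the product code $(b\mathbb{Z}_q)^n$ really does have packing radius exactly $r = (b-1)/2$ and not something larger — this rests on the distance set $\mathcal{D}_{\infty,n} = \mathbb{N}$ and the fact (noted earlier in the paper) that $r_\infty(\mathscr{C}) = \lfloor (d_\infty(\mathscr{C})-1)/2\rfloor$, with $d_\infty((b\mathbb{Z}_q)^n) = b$. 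Everything else is the elementary divisibility argument above plus the product decomposition of the cube.
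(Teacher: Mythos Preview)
Your proposal is correct and follows essentially the same approach as the paper: the necessary direction uses the sphere-packing identity $|\mathscr{C}|(2r+1)^n = q^n$ to force $(2r+1)\mid q$ and then rules out the degenerate case $m=1$, while the sufficient direction exhibits the product code $(b\mathbb{Z}_q)^n$, which is precisely the code generated by $\{b\ee_1,\ldots,b\ee_n\}$ in the paper. Your write-up is in places more careful (the valuation argument for divisibility, the explicit product decomposition of the $\ell_\infty$ ball, and the handling of $m=1$), but the underlying ideas coincide.
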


\begin{proof} A code $\mathscr{C}
\subset \mathbb{Z}_q^{n}$ with minimum distance $d_{\infty}(\mathscr{C}) = 2r+1$ is perfect
with respect to the distance $d_{\infty}$ iff
\begin{equation}\label{condition}  |\mathscr{C}| (2r+1)^{n} = q^{n}. \end{equation}

\begin{itemize} \item Necessary condition: From \eqref{condition}, if there exists a perfect code $\mathscr{C}$, then
$$|\mathscr{C}| = \left(\frac{q}{2r+1}\right)^{n}.$$
Hence $2r+1$ has to divide $q$. Excluding the trivial codes ($r = 0$), $q$
must have an odd factor greater than $1$. If $q$ is prime, since $(2r+1) | q$ it follows that
$2r+1 = q$, which gives a trivial perfect code.

\item Sufficient condition: Let $q=b m$ with $b
> 1$ an odd integer and $m > 1$ an integer. Taking the code $\mathscr{C}$
generated by the vectors
$$\{(\overline{b},\overline{0},\ldots,\overline{0}),(\overline{0},\overline{b},\ldots,\overline{0})\ldots,(\overline{0},\ldots,\overline{0},\overline{b})\}
\subset \mathbb{Z}_q^{n}$$ we have $|\mathscr{C}| = m^n$. In fact, if
$\overline{t} \in \mathbb{Z}_q$ and $\overline{t} =
\overline{a}\overline{m} + \overline{r}$ with $\overline{0}\leq
\overline{r} <\overline{m}$ then
$\overline{t}(\overline{0},\ldots,\overline{b},\ldots,\overline{0})=\overline{r}(\overline{0},\ldots,\overline{b},\ldots,\overline{0})$.
For this code the minimum distance $\mu =
\min\{d_{\infty}(\overline{x},\overline{y}); \,\,
\overline{x},\overline{y} \in \mathscr{C}, \,\, \overline{x}\neq \overline{y}
\}=b$. Therefore $r = (b-1)/2$. Since $(2r +
1)^{n} = b^{n}$, $\mathscr{C}$ is a $r$-perfect code.
\end{itemize}\end{proof}
In the last proposition we have obtained perfect codes in the $\ell_{\infty}$
metric that are Cartesian products. The next example shows that
there are other families of perfect codes in the $\ell_{\infty}$ metric.

\begin{example} Let $q=m^{2}$, where $m$ is an odd prime number. The group $\mathbb{Z}_{m^{2}}^{2}$ has $m^{4}$ elements
and has non-trivial subgroups of order $m$, $m^{2}$ and $m^{3}$. From the equality $|\mathscr{C}|(2r+1)^{2}=m^{4}$, a necessary condition to obtain a non-trivial  perfect code in the $\ell_{\infty}$ metric is $|\mathscr{C}|=m^{2}$. The codes
$\mathscr{C}=\left<(\overline{1},\overline{m\,a})\right>$, $\overline{a} \neq
\overline{0}$, satisfy $|\mathscr{C}|=m^{2}$ and they are perfect. Figure \ref{fig:noncyclic}
shows the code $\mathscr{C}=\left<(\overline{1},\overline{7})\right> \subset
\mathbb{Z}_{49}^{2}$.
\begin{figure}[h!]
\centering
\includegraphics[scale=0.4]{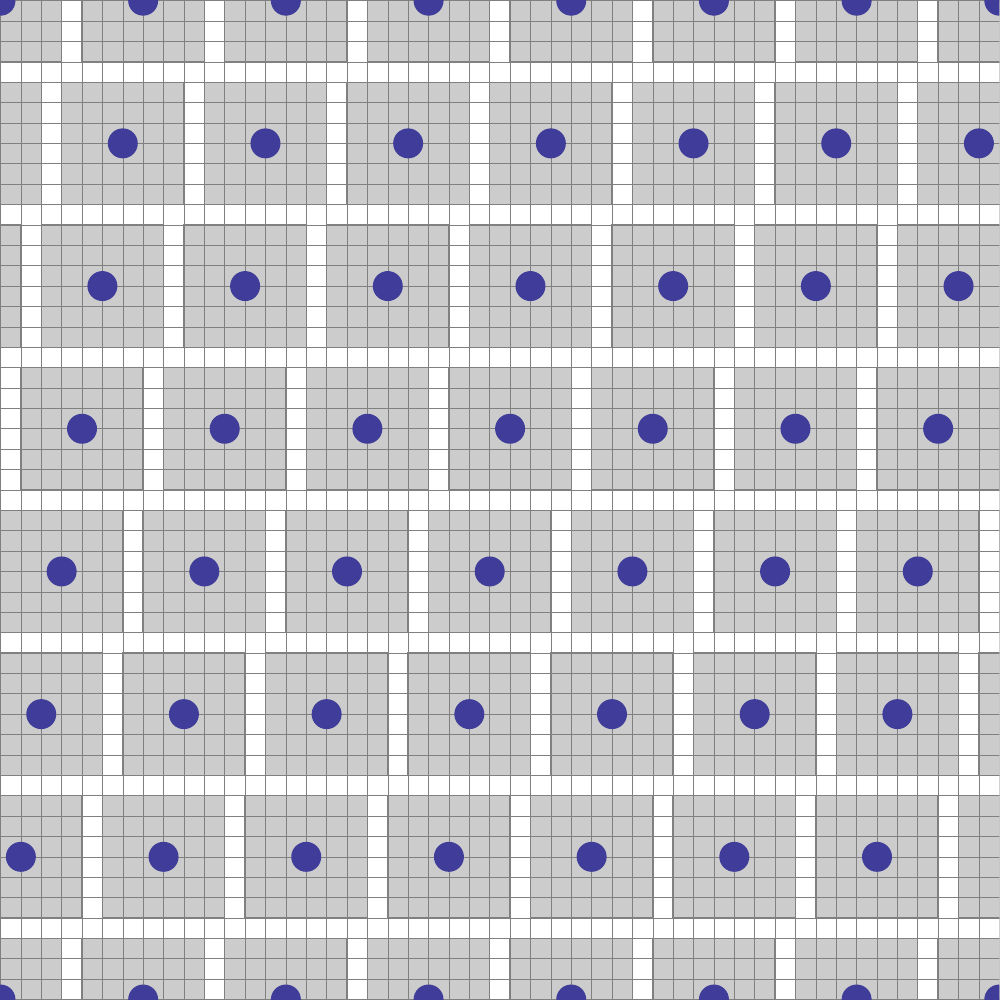}
\caption{The code $\mathscr{C}=\left<(\overline{1},\overline{7})\right>
\subset \mathbb{Z}_{49}^{2}$, which is 3-perfect in the $\ell_{\infty}$ metric, and also $2^{1/p} 3$-perfect in the $\ell_p$ metric, for $p \geq 3$ (Corollary \ref{cor:perfectinfty}).}
\label{fig:noncyclic}
\end{figure}
\label{ex:noncyclic}
\end{example}

The following proposition relates polyominoes in the $\ell_p$ and $\ell_\infty$ metrics.

\begin{proposition} \label{p-larger} If $n r^p < (r+1)^p,$ $r$ integer, then $B_p^{n}(n^{1/p}r) = B_{\infty}^{n}(r)$. On the other hand, if $B_p^{n}(r_1) = B_\infty^{n}(r_2)$, $r_2$ integer, for some $r_1, r_2 \geq 1$, then $n r_2^p \leq r_1^p < (r_2+1)^p$. 
\label{prop:cubicPolyominoes}
\end{proposition}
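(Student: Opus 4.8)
The plan is to verify both assertions directly from the definitions of $B_p^n(\cdot)$ and $B_\infty^n(\cdot)$, exploiting the fact that coordinates of lattice points are integers.

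For the first assertion, assume $n r^p < (r+1)^p$ and prove the two inclusions. If $\xx = (x_1,\dots,x_n) \in B_\infty^n(r)$, then $|x_i| \le r$ for all $i$, so $\sum_{i=1}^n |x_i|^p \le n r^p = (n^{1/p}r)^p$ and hence $\xx \in B_p^n(n^{1/p}r)$; this gives $B_\infty^n(r) \subseteq B_p^n(n^{1/p}r)$ and, incidentally, does not even use the hypothesis. For the reverse inclusion I would argue by contradiction: if some $\xx \in B_p^n(n^{1/p}r)$ had a coordinate with $|x_j| > r$, then $|x_j| \ge r+1$ because $x_j \in \mathbb{Z}$, whence $\sum_i |x_i|^p \ge (r+1)^p > n r^p = (n^{1/p}r)^p$, contradicting $\xx \in B_p^n(n^{1/p}r)$. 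Thus $\max_i|x_i| \le r$ and the two balls coincide.

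For the second assertion, suppose $B_p^n(r_1) = B_\infty^n(r_2)$ with $r_2$ a positive integer and $r_1 \ge 1$. Evaluating at the ``corner'' point $\cc = (r_2,\dots,r_2)$, which lies in $B_\infty^n(r_2)$ and therefore in $B_p^n(r_1)$, yields $n r_2^p \le r_1^p$. Evaluating at $\ee = (r_2+1,0,\dots,0)$, whose $\ell_\infty$-norm equals $r_2+1 > r_2$ so that $\ee \notin B_\infty^n(r_2) = B_p^n(r_1)$, yields $(r_2+1)^p > r_1^p$. Putting these together gives $n r_2^p \le r_1^p < (r_2+1)^p$, as claimed.

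I do not anticipate a genuine obstacle: each step reduces to comparing $p$-th powers of nonnegative integers. The one point deserving explicit mention is the appeal to integrality of the coordinates in the converse inclusion above — it is exactly what makes $(r+1)^p$ the sharp threshold, since a coordinate that overshoots $r$ must jump all the way to $r+1$.
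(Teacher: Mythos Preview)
Your proof is correct and follows essentially the same approach as the paper. The paper phrases the first part via the norm inequalities $d_{\infty}(\xx,\bm{0}) \leq d_{p}(\xx,\bm{0}) \leq n^{1/p} d_{\infty}(\xx,\bm{0})$ rather than working coordinatewise, but this is the same argument, including the implicit use of integrality to pass from $d_\infty(\xx,\bm{0}) < r+1$ to $d_\infty(\xx,\bm{0}) \le r$; the second assertion is proved with exactly the same two test points $(r_2,\ldots,r_2)$ and $(r_2+1,0,\ldots,0)$.
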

\begin{proof} The proof follows from the fact that for any $\xx$ and $\yy \in \mathbb{Z}^n$, we have
\begin{equation}
d_{\infty}(\xx,\yy) \leq d_{p}(\xx,\yy) \leq n^{1/p} d_{\infty}(\xx,\yy).
\end{equation}
For the first part, the inequalities imply that $ B_\infty^n(r) \subset B_p^n(n^{1/p} r)$ and that any point $\xx \in B_p^n(n^{1/p} r)$ must satisfy $d_{\infty}(\xx,\bm{0}) < r+1$, thus $B_p^{n}(n^{1/p}r) = B_{\infty}^{n}(r)$.

To prove the ``converse'', suppose $B_p^{n}(r_1) = B_\infty^{n}(r_2)$. Since $(r_2, \ldots, r_2) \in B_{\infty}^{n}(r_2) \Rightarrow$
\begin{equation} (r_2,\ldots,r_2) \in B_{p}^{n}(r_1) \Rightarrow n r_2^p \leq r_1^p.
\label{eq:simple}
\end{equation}
On the other hand, since $(r_2+1, 0 \ldots, 0) \notin B_\infty^{n}(r_2)= B_p^{n}(r_1)$, we conclude that $(r_2+1)^p > r_1^p$.
\end{proof}

\begin{corollary} If $p> \log(n)/\log(1+1/r)$, $r$ integer, an $r$-perfect code $\mathscr{C} \subset \mathbb{Z}^n$ in the $\ell_{\infty}$ metric is also $n^{1/p}r$-perfect  in the  $\ell_{p}$ metric, which assures the existence of perfect codes in the $\ell_p$ metric under this condition. On the other hand, if an $h$-perfect code $\mathscr{C} \subset \mathbb{Z}^{n}$ in the $\ell_p$ metric is also an $r$-perfect in the $\ell_{\infty}$ metric (cubic tiles $T_p^{n}(h)$), then $h=n^{1/p}r$ and $p>\log(n)/\log(1+1/r).$
\label{cor:perfectinfty}
\end{corollary}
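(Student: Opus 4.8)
The plan is to deduce both assertions from Proposition~\ref{prop:cubicPolyominoes} after rewriting the condition on $p$ as a statement about $n r^{p}$ versus $(r+1)^{p}$. Since $r$ is a positive integer, $\log(1+1/r)>0$, and therefore
\[
p>\frac{\log n}{\log(1+1/r)}\iff p\log(1+1/r)>\log n\iff\Bigl(\tfrac{r+1}{r}\Bigr)^{p}>n\iff n\,r^{p}<(r+1)^{p},
\]
which is precisely the hypothesis of the first part of Proposition~\ref{prop:cubicPolyominoes}. So I would first record this equivalence and then treat the two directions separately.

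\textbf{Forward direction.} Assume $p>\log n/\log(1+1/r)$, equivalently $n r^{p}<(r+1)^{p}$, and let $\mathscr{C}\subset\mathbb{Z}^{n}$ be $r$-perfect in the $\ell_{\infty}$ metric; such codes exist for every positive integer $r$ (e.g. the lattice $(2r+1)\mathbb{Z}^{n}$, since $T_{\infty}^{n}(r)$ tiles $\mathbb{R}^{n}$). By Proposition~\ref{prop:cubicPolyominoes}, $B_{p}^{n}(n^{1/p}r)=B_{\infty}^{n}(r)$ as subsets of $\mathbb{Z}^{n}$, so the tiling of $\mathbb{Z}^{n}$ by the translates $\{\bm{c}+B_{\infty}^{n}(r):\bm{c}\in\mathscr{C}\}$ is simultaneously a tiling by $\{\bm{c}+B_{p}^{n}(n^{1/p}r):\bm{c}\in\mathscr{C}\}$; in particular these $\ell_{p}$-balls are pairwise disjoint and cover $\mathbb{Z}^{n}$. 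It then remains to check that $n^{1/p}r$ is the packing radius $r_{p}(\mathscr{C})$: first, $n^{1/p}r\in\mathcal{D}_{p,n}$ because $(r,\ldots,r)$ is at $\ell_{p}$-distance $(nr^{p})^{1/p}=n^{1/p}r$ from the origin; second, if some $r'\in\mathcal{D}_{p,n}$ with $r'>n^{1/p}r$ also gave disjoint balls, one picks (after translating so $\bm{0}\in\mathscr{C}$) a point $\bm{x}\in\mathbb{Z}^{n}$ with $d_{p}(\bm{0},\bm{x})=r'$; since the balls $B_{p}^{n}(\bm{c},n^{1/p}r)$ cover, $\bm{x}\in B_{p}^{n}(\bm{c},n^{1/p}r)$ for some $\bm{c}\neq\bm{0}$, whence $\bm{x}\in B_{p}^{n}(\bm{0},r')\cap B_{p}^{n}(\bm{c},r')$, a contradiction. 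Thus $\mathscr{C}$ is $n^{1/p}r$-perfect in the $\ell_{p}$ metric, which also yields the claimed existence.

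\textbf{Converse direction.} Suppose $\mathscr{C}$ is $h$-perfect in the $\ell_{p}$ metric and its polyomino is a cube, i.e. $B_{p}^{n}(h)=B_{\infty}^{n}(r)$ for a positive integer $r$ (this is the content of ``also $r$-perfect in the $\ell_{\infty}$ metric, cubic tiles $T_{p}^{n}(h)$''). The second part of Proposition~\ref{prop:cubicPolyominoes} gives $n r^{p}\le h^{p}<(r+1)^{p}$; the strict inequality is equivalent, by the computation above, to $p>\log n/\log(1+1/r)$. Finally, since $n r^{p}<(r+1)^{p}$, the first part of Proposition~\ref{prop:cubicPolyominoes} now applies and gives $B_{p}^{n}(n^{1/p}r)=B_{\infty}^{n}(r)=B_{p}^{n}(h)$; as both $h$ and $n^{1/p}r$ lie in $\mathcal{D}_{p,n}$, equality of the balls forces $h=n^{1/p}r$ (if $h>n^{1/p}r$, a point at $\ell_{p}$-distance $h$ from the origin would lie in $B_{p}^{n}(n^{1/p}r)$; if $h<n^{1/p}r$, the point $(r,\ldots,r)$ would lie in $B_{p}^{n}(n^{1/p}r)\setminus B_{p}^{n}(h)$).

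\textbf{Main obstacle.} The substantive input, Proposition~\ref{prop:cubicPolyominoes}, is already available, so the remaining work is bookkeeping: that a family of pairwise-disjoint balls covering $\mathbb{Z}^{n}$ realizes the packing radius, and that equal balls have equal radii within $\mathcal{D}_{p,n}$. Both are routine once one tracks which radii are achievable; the only genuinely delicate point is reading the converse hypothesis correctly, namely identifying ``$\mathscr{C}$ is also $r$-perfect in $\ell_{\infty}$ with cubic polyomino $T_{p}^{n}(h)$'' with the equality $B_{p}^{n}(h)=B_{\infty}^{n}(r)$.
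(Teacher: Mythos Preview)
Your proof is correct and follows exactly the route the paper intends: the corollary is stated there without proof, as an immediate consequence of Proposition~\ref{prop:cubicPolyominoes}, and you have simply written out the straightforward bookkeeping (the equivalence $p>\log n/\log(1+1/r)\Leftrightarrow nr^{p}<(r+1)^{p}$, that a tiling by $B_{\infty}^{n}(r)=B_{p}^{n}(n^{1/p}r)$ realizes the $\ell_{p}$ packing radius, and that equal balls with radii in $\mathcal{D}_{p,n}$ have equal radii). Your reading of the parenthetical ``cubic tiles $T_{p}^{n}(h)$'' as the hypothesis $B_{p}^{n}(h)=B_{\infty}^{n}(r)$ is the intended one.
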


\begin{example} The Example \ref{ex:noncyclic} can be extended by considering the $\ell_p$ metric. The code $\mathscr{C}=\left<(\overline{1},\overline{m\,a})\right>$, $\overline{a} \neq
\overline{0}$, $m$ prime, is perfect in the $\ell_p$ metric with packing radius $r_p = 2^{1/p}(m-1)/2$, if $(1 + 2/(m-1))^p > 2$. In particular, the code $\mathscr{C}=\left<(\overline{1},\overline{7})\right> \subset
\mathbb{Z}_{49}^{2}$ in Figure \ref{fig:noncyclic} is also perfect in the $\ell_p$ metric for $p \geq 3$.
\end{example}

For fixed $p$ and $n$, the conditions of Corollary \ref{cor:perfectinfty} can be quite strict. For $p = 2$, the condition of Corollary \ref{cor:perfectinfty} is only satisfied in $\mathbb{Z}^2$ ($r = \sqrt{2}, 2\sqrt{2}$) and in $\mathbb{Z}^3$ ($r = \sqrt{3}$).

We have seen above that perfect codes in the $\ell_{\infty}$ metric are also perfect codes in the $\ell_p$ metric, for large $p$. But we also may have codes which are perfect in the $\ell_p$ metric for any $p \geq 1$ but not perfect in the $\ell_{\infty}$ metric.

\begin{example} From Proposition \ref{necessary}, if $q$ is a prime number, then there are no perfect codes $\mathscr{C} \subset \mathbb{Z}_q^{n}$
considering the $\ell_{\infty}$ metric. However, the $13$-ary code $\mathscr{C} =
\langle (\overline{1},\overline{5})\rangle \subset
\mathbb{Z}_{13}^{2}$ is perfect in the $\ell_{p}$ metric for $1 \leq p
<\infty$ (Figure \ref{fig:perfectl2}).
\label{ex:bla}
\end{example}

\begin{remark} Golomb and Welch \cite[Thm. 3]{Golomb} showed that there are $1$-perfect codes in the $\ell_1$ metric for all $n$. Since $B_p^{n}(1) = B_1^{n}(1)$, $1 \leq p < \infty$, this also proves the existence of $1$-perfect codes in the $\ell_p$ metric for all $n$.
\label{rmk:1isTrivial}
\end{remark}

%In fixed dimensions, if $p$ is large enough, the $\infty$-metric gives all the information about perfect codes, as can be seen next:
%
%%\begin{proposition} \label{thm:perfectAllp} Let $C \subset \mathbb{Z}_q^{n}$ be a perfect code in the $\infty$-Lee metric with packing radius
%%$R$. If $p> \frac{\ln (n)}{\ln \left(1+\frac{1}{R}\right)}$, then
%%$C$ is perfect in the $p$-Lee metric. \end{proposition}
%%\begin{proof} If $C$ is perfect in the $p$-Lee metric for some value of $p$, $1 \leq p < \infty$, then there is a packing radius
%%$R_p$ ($R_p^{p}$ integer) such that \begin{equation} \label{bolas}
%%B_{\infty}[\overline{0},R] \subset B_{p}[\overline{0},R_p]
%%\subset B_{\infty}(\overline{0},R+1). \end{equation} From the
%%first inclusion, if ${\bm x} \in B_{\infty}[\overline{0},R]$,
%%then $\|{\bm x}\|_{p} \leq n^{1/p} \|x\|_{\infty} \leq R
%%n^{1/p}$. Hence, $R n^{1/p} \leq R_p$. From the second inclusion,
%%$R_p < R+1$.
%%\begin{figure}[h!]
%%\centering
%%\includegraphics[scale=0.3]{encaixe}
%%\caption{$B_{\infty}[\overline{0},2] \subset B_{p}[\overline{0},R_p] \subset B_{p}[\overline{0},3]
%%\subset B_{\infty}(\overline{0},3)$}
%%\end{figure}
%%Since $R_p^{p}$ is integer, $R n^{1/p} = R_p < R+1$. Now, $R
%%n^{1/p} < R+1$ if, and only if, $n^{1/p} < 1+\frac{1}{R}$, that is, $p>\frac{\ln (n)}{\ln \left(1+\frac{1}{R}\right)}$.
%%Therefore, for $p>\frac{\ln (n)}{\ln \left(1+\frac{1}{R}\right)}$,
%%the balls of radius $Rn^{1/p}$ centered at the codewords of $C$ are
%%disjoint and cover $\mathbb{Z}_q^{n}$, what means that this code is
%%perfect in the $p$-Lee metric.
%%
%%\end{proof}

\section{Non-Existence Theorems}

\subsection{Asymptotic Non-Existence Theorems}
\label{subsec:Asymptotic}
We use the technique of \cite{Golomb} dealing with the Lee metric ($p=1$), recently revisited by Horak \cite{Horak}, to prove the non-existence of perfect codes in the $\ell_p$ metric, $1 < p < \infty$, for sufficiently large $r$. The idea is to associate a packing by \textit{superballs} to a tiling of polyominoes of approximately the same radius. A first difficulty for extending the results for $p>1$ is that the packing radius of a code is not completely determined by its minimum distance, as shown next.

\begin{remark} Let $1 < p < \infty$, $p \in \mathbb{N}$. For $n \geq 2^p$, there exist two lattices $\Lambda_1, \Lambda_2 \subset \mathbb{Z}^n$ such that $d_p(\Lambda_1) = d_p(\Lambda_2)$ and $r_p(\Lambda_1) \neq r_p(\Lambda_2)$.  For instance, let $q = 4$. Consider two codes, ${\mathscr{C}}_1 = \left\{ (\overline{2},\overline{0},\ldots,\overline{0})j : j = \overline{0},\overline{1} \right\} \subset \mathbb{Z}_4^n$, and $${\mathscr{C}}_2 = \{ (\overbrace{\overline{1},\overline{1},\ldots,\overline{1}}^{2^p },\overline{0},\ldots,\overline{0})j : j = \overline{0}, \overline{1}, \overline{2}, \overline{3} \} \subset \mathbb{Z}_4^n.$$
It is clear that $d_{p}({\mathscr{C}}_1) = d_{p}({\mathscr{C}}_2) = 2$. On the other hand $r_p({\mathscr{C}}_1) = 0$, since $(\overline{1},\overline{0},\ldots,\overline{0})$ is at distance $1$ from the origin and from $(\overline{2},\overline{0},\ldots,\overline{0})$, whereas $r_p({\mathscr{C}}_2) = (2^{p-1}-1)^{1/p}$. In other words, ${\mathscr{C}}_1$ and ${\mathscr{C}}_2$ have same $\ell_{p}$ distance but different radii. From Proposition \ref{thm:samePackingRadius}, this property is transferred to the lattices $\Lambda({\mathscr{C}}_1)$ and $\Lambda({\mathscr{C}}_2)$.
\end{remark}

However, it is possible to lower/upper bound $r_p(\Lambda)$ in terms of $d_p(\Lambda)$, so that, rouhgly, for large packing radius, $r_p(\Lambda) = d_p(\Lambda)/2+O(1)$. The following results provide such bounds.

\begin{lemma}The minimum distance and packing radius of a lattice $\Lambda \subset \mathbb{Z}^n$ satisfy
\begin{equation*} \left\lfloor \frac{d_p(\Lambda)-1}{2} \right\rfloor \leq r_p(\Lambda) <  \frac{d_p(\Lambda)}{2} + \frac{n^{1/p}}{2}.
\end{equation*}
\label{thm:radiusAsymptotic}
\end{lemma}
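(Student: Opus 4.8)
The plan is to establish the two inequalities separately, since they come from different elementary comparisons.

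\textbf{Lower bound.} For the left-hand inequality, set $d = d_p(\Lambda)$ and $r = \lfloor (d-1)/2 \rfloor$. I would argue that the open balls $B_p^n(\cc, r)$ with $\cc \in \Lambda$ are pairwise disjoint, hence $r_p(\Lambda) \ge r$ provided $r$ lies in the distance set $\mathcal D_{p,n}$ — but one must be slightly careful here, since the packing radius is only permitted to take values in $\mathcal D_{p,n}$. The clean way around this: if $\xx \in B_p^n(\cc,r) \cap B_p^n(\cc',r)$ with $\cc \neq \cc' \in \Lambda$, then $d_p(\cc,\cc') \le d_p(\cc,\xx) + d_p(\xx,\cc') \le 2r \le d-1 < d$, contradicting the definition of $d$ as the minimum distance. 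So no two closed balls of radius $r$ around codewords overlap, and since $\cc - \cc' \in \Lambda \subset \mathbb Z^n$ has $d_p(\cc,\cc') \in \mathcal D_{p,n}$, the largest \emph{achievable} distance with disjoint balls is at least $r$; formally $r_p(\Lambda) \ge \max\{ s \in \mathcal D_{p,n} : s \le r\}$, and since $0,1 \in \mathcal D_{p,n}$ and $r$ is an integer $\le$ some achievable value, one checks $r$ itself need not be in $\mathcal D_{p,n}$, but the packing radius is still $\ge \lfloor (d-1)/2\rfloor$ as a real number because the ball of any radius $\le r$ is contained in the ball of radius $r$. I expect the cleanest phrasing is: balls of radius $\lfloor(d-1)/2\rfloor$ are disjoint, and the packing radius is the sup of achievable radii for which this holds, which is therefore $\ge \lfloor(d-1)/2\rfloor$.

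\textbf{Upper bound.} For the right-hand inequality, let $\vv \in \Lambda$ be a shortest nonzero vector, so $d_p(\bm 0, \vv) = d$. Consider the midpoint region: I would exhibit a point $\xx \in \mathbb Z^n$ that is close to both $\bm 0$ and $\vv$. Concretely, take $\xx$ to be a coordinatewise rounding of $\vv/2$, so that $|x_i - v_i/2| \le 1/2$ for each $i$. Then $d_p(\xx, \bm 0) = \left(\sum |x_i|^p\right)^{1/p} \le \left(\sum (|v_i|/2 + 1/2)^p\right)^{1/p} \le \left(\sum (|v_i|/2)^p\right)^{1/p} + \left(\sum (1/2)^p\right)^{1/p} = d/2 + n^{1/p}/2$ by the triangle inequality (Minkowski) in $\ell_p$, and symmetrically $d_p(\xx, \vv) \le d/2 + n^{1/p}/2$. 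Hence the closed balls of radius $d/2 + n^{1/p}/2$ around $\bm 0$ and $\vv$ both contain $\xx$, so they are not disjoint; therefore $r_p(\Lambda) < d/2 + n^{1/p}/2$ (the strict inequality because the packing radius is realized by disjoint balls, whereas at this radius they intersect). One should double-check the rounding can be done so that $\xx \neq \bm 0$ and $\xx \neq \vv$ when $d$ is small, but since $\vv \neq \bm 0$ at least one coordinate $v_i$ is nonzero and rounding that coordinate gives $x_i \neq 0$ or $x_i \neq v_i$ as needed; in the degenerate cases ($d = 1$) the bound $d/2 + n^{1/p}/2 > 1 \ge r_p$ holds trivially anyway.

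\textbf{Main obstacle.} The genuine subtlety is entirely bookkeeping about the distance set: the packing radius is \emph{defined} as the largest element of $\mathcal D_{p,n}$ with the disjointness property, not an arbitrary real, so I must phrase both bounds so that restricting to achievable distances does not break them. For the lower bound this means noting $r_p(\Lambda) \ge \lfloor(d-1)/2\rfloor$ holds because balls of that radius (achievable or not) are disjoint and shrinking to the nearest achievable radius below only helps; for the upper bound it is automatic since we are proving a strict upper bound. The Minkowski inequality step and the existence of a suitable rounding point are routine.
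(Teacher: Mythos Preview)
Your approach is essentially the same as the paper's: triangle inequality for the lower bound, and rounding the midpoint of a shortest vector plus Minkowski for the upper bound. The one place you overcomplicate matters is the ``main obstacle'' about the distance set: the quantity $\lfloor (d_p(\Lambda)-1)/2 \rfloor$ is a non-negative integer, and every non-negative integer $m$ lies in $\mathcal{D}_{p,n}$ trivially (the vector $(m,0,\ldots,0)$ has $\ell_p$-norm $m$), so there is nothing to worry about---the paper dispatches this in one line and then applies your triangle-inequality argument.
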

\begin{proof}
\begin{itemize}
\item[(i)] Lower bound: It is enough to observe that $ \left\lfloor \frac{d_p(\Lambda)-1}{2} \right\rfloor$ belongs to $\mathcal{D}_{p,n}$ and apply standard arguments.
\item[(ii)] Upper Bound: Let $r_u := \frac{d_p(\Lambda)}{2} + \frac{n^{1/p}}{2}$. Let $\bm{x} \in \Lambda$ be a vector such that $d_{p}(\bm{x},\bm{0}) = d_p(\Lambda)$. Suppose wlog that $x_i \geq 0$ for all $i$. We prove that $B_p(\bm{0}, r_u) \cap B_p(\bm{x}, r_u) \neq \emptyset$. For this let
$$\yy = \left(\left\lfloor \frac{x_1}{2}\right\rfloor,\left\lfloor \frac{x_2}{2}\right\rfloor,\ldots, \left\lfloor \frac{x_n}{2}\right\rfloor\right).$$
Of course $d_p(\yy,\bm{0}) \leq d_p(\Lambda)/2$. On the other hand,
$$d_p(\xx,\yy) = \left(\sum_{i=1}^n \left|x_i - \left\lfloor \frac{x_i}{2} \right\rfloor \right|^p\right)^{1/p} \leq \left(\sum_{i=1}^n \left|\frac{x_i}{2} + \frac{1}{2} \right|^p\right)^{1/p} \stackrel{(a)}{\leq} \frac{d_p(\Lambda)}{2} + \frac{n^{1/p}}{2},$$
where in $(a)$ we used the triangle inequality for the $\ell_p$ metric.
\end{itemize}
\end{proof}
%\begin{corollary} For fixed $n,p$, and a family of lattices $\Lambda_j$ such that $r_p(\Lambda_j) \to \infty$, we have
%$$\lim_{j\to\infty} \frac{r_p(\Lambda_j)}{d_p(\Lambda_j)/2} = 1.$$
%
%\textsc{\label{cor:radiusAsymptotic}}
%\end{corollary}
Another difficulty is the fact that, unlike the $\ell_1$ metric, there is no closed form for $\mu_p(n,r)$ the number of integer points in $B_p^{n}(r)$, when $1 < p < \infty$. However, the following asymptotic result is enough to our purposes. Let $S_{p}^{n}(r)$ be a $p$-ball (superball) in $\mathbb{R}^n$, i.e.,
$$S_{p}^n(r) = \left\{ \xx \in \mathbb{R}^n: |x_1|^p + \ldots |x_n|^p \leq r^p \right\}.$$

Let $V_{n,p}$ be the (Euclidean) volume of a $p$-ball of radius $1$ in $\mathbb{R}^n$ \cite[p.32]{Volume}.

\begin{lemma} $\displaystyle \lim_{r \to \infty} \frac{V_{n,p} r^n}{ \mu_p(n,r)} = 1.$
\label{prop:detNumberPoints}
\end{lemma}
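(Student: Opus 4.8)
The statement to prove is
\[
\lim_{r\to\infty}\frac{V_{n,p}\,r^n}{\mu_p(n,r)}=1,
\]
i.e. the number of integer points in a superball of radius $r$ is asymptotic to its Euclidean volume $V_{n,p}r^n$. The plan is to use the standard lattice-point-counting heuristic: associate to each integer point $\xx\in B_p^n(r)$ the unit cube $\xx+[-1/2,1/2]^n$, i.e. the cells of the polyomino $T_p^n(r)$. Since these cubes have disjoint interiors and each has Euclidean volume $1$, we get $\mu_p(n,r)=\mathrm{vol}\big(T_p^n(r)\big)$. So it suffices to show $\mathrm{vol}\big(T_p^n(r)\big)/\big(V_{n,p}r^n\big)\to1$, and for this I would sandwich $T_p^n(r)$ between two scaled superballs in $\mathbb{R}^n$.

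The key geometric observation is that a unit cube centered at a point of $S_p^n(r)$ (Euclidean ball) is contained in $S_p^n\!\big(r+\tfrac{\sqrt n}{2}\big)$, because the farthest point of such a cube from the origin is at $\ell_2$-distance at most $\sqrt n/2$ from its center, and $\ell_p$ balls are nested under the $\ell_2\!\to\!\ell_p$ comparison — more simply, one notes directly that if $|x_i-c_i|\le 1/2$ then $\big(\sum|x_i|^p\big)^{1/p}\le\big(\sum|c_i|^p\big)^{1/p}+\big(\sum(1/2)^p\big)^{1/p}=\big(\sum|c_i|^p\big)^{1/p}+n^{1/p}/2$ by the triangle inequality for the $\ell_p$ norm on $\mathbb{R}^n$. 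Hence
\[
T_p^n(r)\subseteq S_p^n\!\Big(r+\frac{n^{1/p}}{2}\Big).
\]
For the reverse containment, observe that if $\xx\in S_p^n(r-n^{1/p}/2)$ then the entire cube $\xx'+[-1/2,1/2]^n$ (where $\xx'$ is the nearest integer point to $\xx$, so that $\|\xx-\xx'\|_\infty\le 1/2$ and thus, since the full cube around $\xx'$ contains points within $\ell_p$-distance $n^{1/p}/2$ of $\xx'$) lies inside a slightly larger superball; a cleaner route is: any point of $S_p^n(r-n^{1/p}/2)$ lies in the cube centered at its nearest integer point, that integer point lies in $B_p^n(r)$ by the triangle inequality, so
\[
S_p^n\!\Big(r-\frac{n^{1/p}}{2}\Big)\subseteq T_p^n(r).
\]
Combining the two inclusions and taking Euclidean volumes,
\[
V_{n,p}\Big(r-\frac{n^{1/p}}{2}\Big)^n\;\le\;\mu_p(n,r)\;\le\;V_{n,p}\Big(r+\frac{n^{1/p}}{2}\Big)^n.
\]
Dividing through by $V_{n,p}r^n$ gives $\big(1-\tfrac{n^{1/p}}{2r}\big)^n\le V_{n,p}r^n/\mu_p(n,r)\le\big(1-\tfrac{n^{1/p}}{2r}\big)^{-n}$ (after inverting), and both bounds tend to $1$ as $r\to\infty$ since $n$ and $p$ are fixed; the squeeze theorem finishes the proof.

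The only genuinely delicate point — and the one I would be most careful about — is the reverse inclusion $S_p^n(r-n^{1/p}/2)\subseteq T_p^n(r)$: one must check that rounding an arbitrary real point $\xx$ to its nearest integer point $\xx'$ does not push it outside $B_p^n(r)$, which follows because $\|\xx-\xx'\|_p\le n^{1/p}\|\xx-\xx'\|_\infty\le n^{1/p}/2$ and hence $\|\xx'\|_p\le\|\xx\|_p+n^{1/p}/2\le r$, so $\xx'\in B_p^n(r)$ and therefore $\xx\in\xx'+[-1/2,1/2]^n\subseteq T_p^n(r)$. Everything else is routine: the identity $\mu_p(n,r)=\mathrm{vol}(T_p^n(r))$ is immediate from disjointness of the cube interiors, and the limit of the two sandwich bounds is elementary. (This argument in fact gives the quantitative estimate $\mu_p(n,r)=V_{n,p}r^n+O(r^{n-1})$, which is more than enough for the asymptotic applications to packing density in the non-existence theorems.)
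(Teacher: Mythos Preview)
Your proof is correct and follows essentially the same approach as the paper: sandwich the polyomino $T_p^n(r)$ between the superballs $S_p^n(r-n^{1/p}/2)$ and $S_p^n(r+n^{1/p}/2)$ via the triangle inequality for the $\ell_p$ norm, take Euclidean volumes to bound $\mu_p(n,r)$, and apply the squeeze theorem. The paper is terser about justifying the two inclusions, while you spell out the rounding argument for the inner one; the only (harmless) slip is that after inverting, your lower bound should read $\big(1+\tfrac{n^{1/p}}{2r}\big)^{-n}$ rather than $\big(1-\tfrac{n^{1/p}}{2r}\big)^{n}$, though both tend to $1$.
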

\begin{proof}Let $\mathcal{A} = [-1/2,1/2]^n$ be a fundamental cube and $l = n^{1/p}/2$ be the maximum $\ell_p$ norm of a point in $\mathcal{A}$. For $r > l$, we have:
$$S_p^n(r-l) \subset \bigcup_{\bm{x} \in B_{p}^n(r)} (\bm{x} + \mathcal{A}) \subset S_p^n(r+l) \Rightarrow$$
$$(r-l)^n V_{n,p} \leq \mbox{vol}\left(\bigcup_{\bm{x} \in B_p^n(r)} (\bm{x} + \mathcal{A})\right) \leq (r+l)^n V_{n,p} \Rightarrow $$
$$(r-l)^n V_{n,p} \leq \mu_{p}(n,r)  \leq (r+l)^n V_{n,p}, $$
and the result follows by dividing the three terms by $r^n V_{n,p}$ and taking the limit as $r \to \infty$.
%$$\frac{\mbox{vol}(B_2( R-L))}{\mbox{vol}(B_2( R))} \leq \frac{\left|S(R)\right| \sqrt{\det \Lambda}}{\mbox{vol}(B_2( R))}  \leq \frac{\mbox{vol}(B_2( R+L))}{\mbox{vol}(B_2( R))}.$$
%Tomando o limite para $R \to \infty$ concluimos a demonstração.
\end{proof}

Let $\Lambda \subset \mathbb{R}^{n}$ be a lattice with minimum distance  $d_p=d_p(\Lambda)$ in the $\ell_p$ metric. A \textit{superball packing} \cite{Superballs} is the union of the translates of the $p$-ball $S_p^n(d/2)$ by all points of $\Lambda$. To this packing, we associate a \textit{packing density} 
$$\Delta_p^n(\Lambda) = \frac{V_{n,p}(d_p/2)^n}{{\det\Lambda}}.$$
We note that $\Delta_p^{n}(\Lambda) < 1$ for $1 < p < \infty$. Let $\Delta_p^{n} = \sup_{\Lambda} \Delta_p^n(\Lambda) < 1$ be the supremum of the densities over all $n$-dimensional lattices. The following theorem generalizes a result by Golomb-Welch for $p = 1$ (\cite[Thm. 6]{Golomb}).

\begin{theorem} Let $1 < p < \infty$ and $n\geq 2$. There is a radius $\overline{r}_{n,p}$ such that if $r \geq \overline{r}_{n,p}$ no linear $r$-perfect code in $\mathbb{Z}^n$ exists in the $\ell_p$ metric.
\label{thm:nonExistenceAsymptotic}
\end{theorem}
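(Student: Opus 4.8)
The plan is to derive a contradiction from the existence of a linear $r$-perfect code $\Lambda$ by comparing the volume of the associated lattice with the density of a superball packing. First I would observe that if $\Lambda$ is $r$-perfect in the $\ell_p$ metric, then the polyominoes $T_p^n(r)$ tile $\mathbb{R}^n$ by translates of $\Lambda$, so $\det \Lambda = \mu_p(n,r)$, the number of integer points in $B_p^n(r)$. Also, since the balls $B_p^n(\xx,r)$ are disjoint, the minimum distance satisfies $d_p(\Lambda) > 2r$, i.e. $d_p(\Lambda) \geq$ the smallest element of $\mathcal{D}_{p,n}$ exceeding $2r$; in particular $d_p(\Lambda)/2 \geq r_p(\Lambda) = r$, and from Lemma \ref{thm:radiusAsymptotic} we also get $d_p(\Lambda) < 2r + n^{1/p}$. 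Thus $d_p(\Lambda)/2$ is squeezed between $r$ and $r + n^{1/p}/2$.

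Next I would package these two facts into the superball density. The lattice $\Lambda$ gives a superball packing of density $\Delta_p^n(\Lambda) = V_{n,p}(d_p(\Lambda)/2)^n / \det\Lambda \leq \Delta_p^n < 1$. Substituting $\det\Lambda = \mu_p(n,r)$ and using $d_p(\Lambda)/2 \geq r$ yields $V_{n,p} r^n / \mu_p(n,r) \leq \Delta_p^n < 1$. On the other hand, by Lemma \ref{prop:detNumberPoints}, $V_{n,p} r^n / \mu_p(n,r) \to 1$ as $r \to \infty$. Hence for all sufficiently large $r$ the left-hand side exceeds $\Delta_p^n$ (since $\Delta_p^n$ is a fixed constant strictly less than $1$), which is a contradiction. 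This gives the threshold $\overline{r}_{n,p}$: choose it so that $V_{n,p} r^n / \mu_p(n,r) > \Delta_p^n$ for all $r \geq \overline{r}_{n,p}$, which is possible precisely because the limit is $1 > \Delta_p^n$.

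One subtlety I would make sure to address: the inequality $V_{n,p} r^n/\mu_p(n,r) \leq \Delta_p^n$ uses $d_p(\Lambda)/2 \geq r$, which is immediate from $d_p(\Lambda) > 2r$ (disjointness of closed balls of radius $r$ forces the minimum distance to be strictly greater than $2r$ — one should be slightly careful here with the case of equality, but since $2r$ may not even lie in $\mathcal{D}_{p,n}$ and in any event two codewords at distance exactly $2r$ would have overlapping closed balls, $d_p(\Lambda) > 2r$ holds). Everything else is a direct substitution. The only genuinely nontrivial ingredients are Lemma \ref{thm:radiusAsymptotic} (only the crude bound $r_p(\Lambda) \le d_p(\Lambda)/2$ is actually needed for this direction, though the upper bound would be needed for an effective estimate of $\overline{r}_{n,p}$) and Lemma \ref{prop:detNumberPoints}, both already established.

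The main obstacle — really the only place requiring care — is justifying that $\Delta_p^n$ is a fixed constant \emph{strictly} bounded away from $1$, uniformly over all lattices in dimension $n$; this is exactly the assertion $\Delta_p^n < 1$ for $1 < p < \infty$ recorded in the excerpt (superballs are strictly convex bodies that are not space-filling), and it is what makes the asymptotic comparison work. Given that, the argument is a clean volume/density squeeze. I would also remark that this proof is non-constructive about $\overline{r}_{n,p}$, and that a quantitative version follows by combining the explicit bound $d_p(\Lambda) < 2r + n^{1/p}$ with explicit volume estimates, which is what yields Corollary \ref{cor:nonExistenceAsymptotic}.
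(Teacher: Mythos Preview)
Your overall strategy matches the paper's --- compute the induced superball density, show it tends to $1$, and contradict $\Delta_p^n<1$ --- but there is a genuine gap in the step where you claim $d_p(\Lambda)>2r$ (hence $d_p(\Lambda)/2\ge r$). That inequality is valid for \emph{continuous} balls in $\mathbb{R}^n$, but the balls $B_p^n(\xx,r)\subset\mathbb{Z}^n$ are discrete, and disjointness of two such sets does \emph{not} force their centers to be more than $2r$ apart. A concrete counterexample is exactly one of the perfect codes of Theorem~\ref{thm:dimension2}: the $2$-perfect lattice in $\mathbb{Z}^2$ with basis $\{(3,2),(2,-3)\}$ (determinant $13$) has packing radius $r_2=2$, yet its minimum distance is $d_2=\sqrt{13}<4=2r_2$. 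So the chain
\[
\frac{V_{n,p}\,r^n}{\mu_p(n,r)}\ \le\ \frac{V_{n,p}(d_p/2)^n}{\mu_p(n,r)}\ =\ \Delta_p^n(\Lambda)
\]
breaks at the first inequality; your justification (``two codewords at distance exactly $2r$ would have overlapping closed balls'') is simply false over $\mathbb{Z}^n$.

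The repair is immediate and is precisely what the paper does: use the \emph{upper} bound of Lemma~\ref{thm:radiusAsymptotic}, $r_p<d_p/2+n^{1/p}/2$, to get $d_p/2>r_p-n^{1/p}/2$, and combine it with the explicit estimate $\mu_p(n,r)\le V_{n,p}(r+n^{1/p}/2)^n$ from the proof of Lemma~\ref{prop:detNumberPoints}. This gives
\[
\Delta_p^n(\Lambda)\ \ge\ \left(\frac{r_p-n^{1/p}/2}{r_p+n^{1/p}/2}\right)^{n}\ \longrightarrow\ 1,
\]
and the density squeeze against $\Delta_p^n<1$ then works exactly as you described. Ironically, the bound you flagged as needed ``only for an effective estimate of $\overline{r}_{n,p}$'' is the one that is indispensable for the argument, while the inequality you actually leaned on is the one that fails.
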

\begin{proof}A linear perfect code $\Lambda$ with radius $r_p = r_p(\Lambda)$ and minimum distance $d_p = d_p(\Lambda)$ induces a superball packing with density $\Delta_p^n(\Lambda) = V_{n,p}(d_p/2)^n/\mu_p(n,r_p)$. From Lemma \ref{prop:detNumberPoints}  $\displaystyle \lim_{r_p \to \infty} \Delta_p^n(\Lambda) = 1$. This means that if there were perfect codes for $r_p$ arbitrarily large, the density of the induced packing would be greater than the supremum of all densities, which is a contradiction. Hence, there is a threshold $\overline{r}_{n,p}$ (depending on $n$ and $p$) such that $r \geq \overline{r}_{n,p}$ implies non-existence of perfect codes.
\end{proof}

The value $\Delta_2^n$ is known for $1 \leq n \leq 8$ and $n = 24$ (see \cite{Cohen} also for bounds in other dimensions). For other values of $p$, not much is known on $\Delta_p^n$.

\begin{corollary}\label{cor:nonExistenceAsymptotic} Let $1 < p < \infty$ and $n \geq 2$. The radius of a linear $r_p$-perfect code in the $\ell_p$ metric satisfies
\begin{equation}  r_p \leq \frac{n^{1/p}}{2} \frac{\left(1+(\Delta_p^{n})^{1/n}\right)}{\left(1-(\Delta_p^{n})^{1/n}\right)}.
\end{equation} 
Asymptotically, as $n \to \infty$, all $r_p$-perfect codes satisfy $r_p = O(n^{1/p})$.
\end{corollary}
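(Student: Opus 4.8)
The plan is to turn the density argument of Theorem~\ref{thm:nonExistenceAsymptotic} into a quantitative statement by inserting the two preparatory lemmas: Lemma~\ref{thm:radiusAsymptotic}, which sandwiches the packing radius between $\tfrac{d_p}{2}-\tfrac{n^{1/p}}{2}$ and $\tfrac{d_p}{2}+\tfrac{n^{1/p}}{2}$, and the volume estimate that underlies Lemma~\ref{prop:detNumberPoints}. Once both are in place, the Corollary's inequality is obtained just by solving a linear inequality for $r_p$.

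Concretely, let $\Lambda\subset\mathbb{Z}^n$ be a linear $r_p$-perfect code with minimum distance $d_p=d_p(\Lambda)$. Since the balls $B_p^n(\bm{v},r_p)$, $\bm{v}\in\Lambda$, tile $\mathbb{Z}^n$, we have $\det\Lambda=[\mathbb{Z}^n:\Lambda]=\mu_p(n,r_p)$, so the induced superball packing has density $\Delta_p^n(\Lambda)=V_{n,p}(d_p/2)^n/\mu_p(n,r_p)\le\Delta_p^n$. The first chain of inclusions in the proof of Lemma~\ref{prop:detNumberPoints} (with $l=n^{1/p}/2$) gives the one-sided bound $\mu_p(n,r_p)\le\big(r_p+\tfrac{n^{1/p}}{2}\big)^nV_{n,p}$, whence $\big(\tfrac{d_p}{2}\big)^n\le\Delta_p^n\big(r_p+\tfrac{n^{1/p}}{2}\big)^n$ and, taking $n$-th roots, $\tfrac{d_p}{2}\le(\Delta_p^n)^{1/n}\big(r_p+\tfrac{n^{1/p}}{2}\big)$. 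On the other side, Lemma~\ref{thm:radiusAsymptotic} gives $r_p<\tfrac{d_p}{2}+\tfrac{n^{1/p}}{2}$, i.e.\ $\tfrac{d_p}{2}>r_p-\tfrac{n^{1/p}}{2}$. Chaining the two, $r_p-\tfrac{n^{1/p}}{2}<(\Delta_p^n)^{1/n}\big(r_p+\tfrac{n^{1/p}}{2}\big)$, and since $\Delta_p^n<1$ for $1<p<\infty$ we have $0<(\Delta_p^n)^{1/n}<1$; collecting the $r_p$ terms and dividing by $1-(\Delta_p^n)^{1/n}$ produces exactly the displayed bound (in fact strictly).

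For the asymptotic assertion one fixes $p$ and observes that the displayed right-hand side is $O(n^{1/p})$ as soon as $(\Delta_p^n)^{1/n}$ is bounded away from $1$, since then $\tfrac{1+(\Delta_p^n)^{1/n}}{1-(\Delta_p^n)^{1/n}}=O(1)$. For $p=2$ this is immediate from Blichfeldt's bound $\Delta_2^n\le\tfrac{n+2}{2}\,2^{-n/2}$, which forces $\limsup_{n\to\infty}(\Delta_2^n)^{1/n}\le 2^{-1/2}$; the analogous exponential decay for other $\ell_p$ balls follows from Blichfeldt-type or linear-programming upper bounds on packing densities.

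The inequality manipulations are routine; the only delicate point is the last one, namely establishing that $(\Delta_p^n)^{1/n}$ stays away from $1$ for general fixed $p$, where — as remarked after Theorem~\ref{thm:nonExistenceAsymptotic} — far less is known than in the Euclidean case. For a fully general asymptotic claim one should invoke a generic exponential upper bound for the lattice packing density of $\ell_p$ balls; absent that, the explicit non-asymptotic bound above still holds verbatim for every $n$.
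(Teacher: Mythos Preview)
Your argument is correct and follows essentially the same route as the paper: both combine the upper bound $r_p<\tfrac{d_p}{2}+\tfrac{n^{1/p}}{2}$ from Lemma~\ref{thm:radiusAsymptotic} with the estimate $\mu_p(n,r_p)\le V_{n,p}(r_p+\tfrac{n^{1/p}}{2})^n$ from the proof of Lemma~\ref{prop:detNumberPoints}, and then solve the resulting inequality for $r_p$. For the asymptotic step the paper invokes a specific bound $\Delta_p^{n}\le 2^{-n/p+\log(n/p+1)}$ from the literature (stated there for $p>2$), which plays the same role as your appeal to Blichfeldt-type bounds; your caveat about the general-$p$ case is well taken.
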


\begin{proof} As in Theorem \ref{thm:nonExistenceAsymptotic}, a perfect code $\Lambda$ induces a superball packing with density
\begin{equation}\label{eq:limitRadius} \Delta_p^n(\Lambda) = \frac{V_{n,p}(d_p/2)^n}{\mu_p(n,r_p)} \stackrel{(a)}{\geq} \left(\frac{r_p - n^{1/p}/2}{r_p+n^{1/p}/2}\right)^n,
\end{equation}
where (a) is due to Lemma \ref{thm:radiusAsymptotic} and the proof of Lemma \ref{prop:detNumberPoints}. But the right-hand side of Inequality (\ref{eq:limitRadius}) cannot exceed $\Delta_p^n$, giving us the bound.
The asymptotic part follows from the fact that ${\Delta_p^n}$ goes to zero exponentially fast, for fixed $p$ and large $n$ (see, e.g., \cite[Thm. 2, p. 415]{Leker} for a proof that ${\Delta_p^{n} \leq 2^{-n/p + \log(n/p+1)}}$, for $p > 2$).
\end{proof}

Note that the bound ${\Delta_p^{n} \leq 2^{-n/p + \log(n/p+1)}}$ \cite[Thm. 2, p.415]{Leker} is non-trivial for large enough $n$ (more precisely, when $n$ is such that $(n/p+1) < 2^{n/p}$). In this case, a bound on the packing radius of a perfect code independent on $\Delta_p^{n}$ can be obtained:

\begin{equation} r_p \leq \frac{n^{1/p}}{2} \frac{\left(1+(\Delta_p^n)^{1/n}\right)}{\left(1-(\Delta_p^n)^{1/n}\right)} \leq \frac{n^{1/p}}{2} \left(\frac{2^{1/p} + (1 + n/p)^{1/n}}{2^{1/p} - (1+n/p)^{1/n}} \right) \approx \frac{n^{1/p}}{2}.
\end{equation}

The table below shows numerical values for Corollary \ref{cor:nonExistenceAsymptotic} for $p=2$, using the known best packing densities for lattices in dimensions $n = 1, \ldots 8$ and $n = 24$ \cite{SloaneLivro}.
\begin{table}[!htb]$$
\begin{array}{|c|c|c|c|c|c|c|c|c|}
\hline
\mbox{n} & 2 & 3 & 4 & 5 & 6 & 7 & 8 & 24 \\
 \hline
\bar{r}_{n,2} & \sqrt{838} & \sqrt{299} & \sqrt{274} & \sqrt{214} & \sqrt{223} & \sqrt{231} & \sqrt{273} & \sqrt{357} \\
 \hline
\end{array}$$
\caption{For $p=2$, dimension $n$ versus threshold radius $\overline{r}_{n,2}$ that guarantees no linear $r$-perfect codes in $\mathbb{Z}^2$ for $r \geq \overline{r}_{n,2}$}
\label{tab:tab}
\end{table}

\section{Case Study: $p = 2$}
\label{sec:casestudy}
A case of interest is when $p = 2$. In this case, we have clear bounds on the packing density. We also remark that, codes in the $\ell_2$ metric in $\mathbb{Z}_q^n$ were recently employed by Belfiore and Sol\'e in \cite{BelfioreSole} as inner codes for some constructions of spherical codes. They conjecture that perfect codes for $p = 2$ will provide stronger constructions, and pose the existence of such codes as an open question. We use Theorem \ref{thm:nonExistenceAsymptotic} to provide negative answers when $n = 2$ and $3$, for $2r_2(\mathscr{C}) < q$. We will use the following result by Horak and AlBdaiwi \cite{Diameter}:

\begin{theorem}{\upshape{\cite[Thm. 6]{Diameter}}}
Let $\mathcal{P} \subset \mathbb{Z}^n$, such that $|\mathcal{P}| = m$. There is a lattice tiling of $\mathbb{Z}^n$ by translates of $\mathcal{P}$ if and only if there is an Abelian group $G$ of order $m$ and a homomorphism $\phi: \mathbb{Z}^n \to G$ such that the restriction of $\phi$ to $\mathcal{P}$ is a bijection.
\label{thm:Homomorphism}
\end{theorem}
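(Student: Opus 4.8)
The plan is to prove both directions of the equivalence by translating the combinatorial condition ``$\mathbb{Z}^n$ is tiled by lattice translates of $\mathcal{P}$'' into the group-theoretic language of a quotient of $\mathbb{Z}^n$. First I would fix notation: a \emph{lattice tiling} of $\mathbb{Z}^n$ by $\mathcal{P}$ means there is a sublattice (full-rank additive subgroup) $\Lambda \subset \mathbb{Z}^n$ such that the translates $\{\bm{v} + \mathcal{P} : \bm{v} \in \Lambda\}$ are pairwise disjoint and their union is $\mathbb{Z}^n$; equivalently, $\mathcal{P}$ is a complete set of coset representatives for $\mathbb{Z}^n/\Lambda$. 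Since $|\mathcal{P}| = m$ and a fundamental domain for $\Lambda$ has exactly $[\mathbb{Z}^n : \Lambda]$ integer points, this forces $[\mathbb{Z}^n : \Lambda] = m$, so $G := \mathbb{Z}^n/\Lambda$ is a finite abelian group of order $m$.

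For the forward direction ($\Rightarrow$), given such a lattice tiling with lattice $\Lambda$, I would take $G = \mathbb{Z}^n/\Lambda$ and $\phi: \mathbb{Z}^n \to G$ the canonical projection. This is a group homomorphism onto an abelian group of order $m$ by the index computation above. The tiling condition says precisely that every element of $\mathbb{Z}^n$ lies in exactly one translate $\bm{v} + \mathcal{P}$ with $\bm{v} \in \Lambda$, i.e.\ every coset of $\Lambda$ contains exactly one element of $\mathcal{P}$; that is exactly the statement that $\phi|_{\mathcal{P}}: \mathcal{P} \to G$ is a bijection. For the reverse direction ($\Leftarrow$), given an abelian group $G$ of order $m$ and a homomorphism $\phi: \mathbb{Z}^n \to G$ whose restriction to $\mathcal{P}$ is a bijection, I would set $\Lambda = \ker \phi$. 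One must first check $\Lambda$ is a full-rank sublattice of $\mathbb{Z}^n$: since $G$ is finite of order $m$, the image $\phi(\mathbb{Z}^n)$ is a subgroup of $G$, hence finite, so $\mathbb{Z}^n/\Lambda$ is finite and $\Lambda$ has finite index (in particular full rank); moreover $\phi|_{\mathcal{P}}$ being a bijection onto $G$ forces $\phi$ to be surjective, so $[\mathbb{Z}^n : \Lambda] = |G| = m$. Then, since $\mathcal{P}$ maps bijectively onto $G \cong \mathbb{Z}^n/\Lambda$ under $\phi$ and $\phi$ is exactly reduction mod $\Lambda$, the set $\mathcal{P}$ is a complete residue system for $\Lambda$, which is the same as saying $\{\bm{v} + \mathcal{P} : \bm{v} \in \Lambda\}$ partitions $\mathbb{Z}^n$ — a lattice tiling.

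The argument is essentially a dictionary translation, so there is no single hard technical obstacle; the point requiring care is verifying in the $(\Leftarrow)$ direction that $\ker\phi$ is genuinely a full-rank lattice of the correct index $m$ (rather than, say, a lower-dimensional subgroup), which is where the hypothesis that $G$ is \emph{finite} of order exactly $m$ and that $\phi|_{\mathcal{P}}$ is onto are both used crucially. One should also be slightly careful that ``lattice tiling'' in the statement is understood to allow the translating set to be any sublattice and that disjointness plus covering together are equivalent, given $|\mathcal{P}| = [\mathbb{Z}^n:\Lambda]$, to the ``exactly one representative per coset'' condition — a counting argument: if the $m$ translates by coset representatives of $\Lambda$ were not disjoint they could not cover the $m$ elements of each fundamental domain, and conversely.
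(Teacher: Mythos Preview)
Your argument is correct and is precisely the standard proof of this equivalence: a lattice tiling by $\mathcal{P}$ is the same data as $\mathcal{P}$ being a complete system of coset representatives for a finite-index sublattice $\Lambda$, and that in turn is the same as $\mathcal{P}$ mapping bijectively under the quotient map $\mathbb{Z}^n \to \mathbb{Z}^n/\Lambda$; conversely any surjective homomorphism to a finite abelian group factors through such a quotient. Your care about full rank and the correct index in the $(\Leftarrow)$ direction is exactly what is needed.

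There is nothing to compare against here: the paper does \emph{not} supply its own proof of this theorem. It is quoted verbatim as \cite[Thm.~6]{Diameter} (Horak--AlBdaiwi) and used as a black-box tool in the proofs of Theorems~\ref{thm:dimension2} and~\ref{thm:dimension3}. So your proposal is not an alternative route but rather a self-contained justification of a result the paper takes for granted.
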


\begin{theorem}\label{thm:dimension2} There are no linear $r$-perfect codes in $\mathbb{Z}^2$ in the $\ell_2$ metric, unless $r = 1, \sqrt{2}, 2,$ or $2\sqrt{2}$.
\end{theorem}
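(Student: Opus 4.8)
The plan is to combine the asymptotic non‑existence bound of Corollary~\ref{cor:nonExistenceAsymptotic} with the homomorphism characterization of lattice tilings in Theorem~\ref{thm:Homomorphism}.

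\emph{Existence of the four radii.} For $r=1$ we have $B_2^2(1)=B_1^2(1)$, so the $1$‑perfect code in the $\ell_1$ metric supplied by Remark~\ref{rmk:1isTrivial} is also $1$‑perfect in $\ell_2$. For $r=\sqrt2$ and $r=2\sqrt2$ one checks directly that $B_2^2(\sqrt2)=B_\infty^2(1)$ and $B_2^2(2\sqrt2)=B_\infty^2(2)$ (the $3\times3$ and $5\times5$ squares), so $3\mathbb{Z}^2$ and $5\mathbb{Z}^2$ are perfect codes whose $\ell_2$ packing radii, by Proposition~\ref{prop:cubicPolyominoes} and Corollary~\ref{cor:perfectinfty}, equal $\sqrt2$ and $2\sqrt2$. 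For $r=2$ we have $|B_2^2(2)|=13$, and the code $\langle(\overline{1},\overline{5})\rangle\subset\mathbb{Z}_{13}^2$ of Example~\ref{ex:bla} lifts, via Construction~A and Corollary~\ref{cor:samePackingRadius} (note $2r_2=4<13$), to a $2$‑perfect linear code in $\mathbb{Z}^2$. Thus all four radii occur.

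\emph{Reduction to finitely many radii.} Taking $p=2$, $n=2$ and the exact value $\Delta_2^2=\pi/(2\sqrt3)$, Corollary~\ref{cor:nonExistenceAsymptotic} shows that a linear $r$‑perfect code in $\mathbb{Z}^2$ requires $r<\sqrt{838}$ (the entry of Table~\ref{tab:tab}). Since moreover $r\in\mathcal{D}_{2,2}$, Proposition~\ref{thm:distanceSet}(1) leaves only a finite list of candidate radii, and it remains to exclude each of them except $1,\sqrt2,2,2\sqrt2$.

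\emph{The homomorphism obstruction.} Fix a remaining radius $r$, set $B=B_2^2(r)$ and $m=|B|$. By Theorem~\ref{thm:Homomorphism} a linear $r$‑perfect code with this radius exists iff there is an abelian group $G$ of order $m$ and a homomorphism $\phi\colon\mathbb{Z}^2\to G$ restricting to a bijection on $B$; write $g_1=\phi(1,0)$, $g_2=\phi(0,1)$, so $\langle g_1,g_2\rangle=G$. Since the $2\lfloor r\rfloor+1$ collinear points $(i,0)$ with $|i|\le\lfloor r\rfloor$ lie in $B$, their images $ig_1$ are distinct, so the order of $g_1$, and by symmetry of $g_2$, is at least $2\lfloor r\rfloor+1$; using instead the diagonal points $(i,\pm i)\in B$ gives the order of $g_1\pm g_2$ at least $2\lfloor r/\sqrt2\rfloor+1$. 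Together with $|G|=m=\mu_2(2,r)$ and with the fact that $\ker\phi$ meets $B-B$ only in $\bm 0$ (so that $\ker\phi$ has large minimum $\ell_2$‑norm, $\gtrsim 2r$), these conditions leave, for each $r$, only a short list of admissible triples $(G,g_1,g_2)$, and each is eliminated by exhibiting an explicit ``column'' or ``cross'' subconfiguration of $B$ on which $\phi$ cannot be injective. A finite, in practice computer‑assisted, case analysis over the admissible radii then completes the proof; in particular the smallest remaining radius $r=\sqrt5$ forces $G=\mathbb{Z}_{21}$, and a direct inspection of the $21$‑point ball rules it out.

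\emph{Main obstacle.} The conceptual core is short — the homomorphism criterion plus the order constraints on $g_1,g_2,g_1\pm g_2$ — but the difficulty is in making the finite check genuinely finite and transparent: one needs structural lemmas sharp enough (on those orders, on how $\langle g_1\rangle$ and $\langle g_2\rangle$ sit inside $G$, and on divisibility properties of $\mu_2(2,r)=|B_2^2(r)|$) to collapse the hundreds of a priori possible radii into a handful of genuinely hard small cases while discarding the rest in bulk. This bookkeeping, rather than any single inequality, is the crux.
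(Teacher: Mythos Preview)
Your proposal is correct and follows essentially the same route as the paper: use Corollary~\ref{cor:nonExistenceAsymptotic} with $\Delta_2^2=\pi/\sqrt{12}$ to reduce to finitely many candidate radii, then apply the homomorphism criterion of Theorem~\ref{thm:Homomorphism} and dispose of the survivors by a computer-assisted search (the paper additionally tightens the cutoff from $\sqrt{838}$ to $\sqrt{294}$ by directly comparing $\pi(r-\sqrt2/2)^2/\mu_2(2,r)$ against $\pi/\sqrt{12}$ before running the check). Your observations on the orders of $g_1$, $g_2$, $g_1\pm g_2$ are sound pruning tools that the paper does not state explicitly, but neither you nor the paper replaces the finite machine verification by a purely structural argument.
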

\begin{proof}
(i) Asymptotic Part: The best sphere packing lattice in $\mathbb{R}^2$ has density $\pi/\sqrt{12}$. From Corollary \ref{cor:nonExistenceAsymptotic} there are no perfect codes for $r \geq 29$. In fact, we can tighten this estimate by explicitly calculating the ratio
$$\frac{V_{n,2} (r - \sqrt{2}/2)^2}{\mu_2(n,r)} = \frac{\pi (r - \sqrt{2}/2)^2}{\mu_2(n,r)}.$$
for all critical radii $r \leq  \sqrt{838}$. From this we conclude that, the packing density of the induced sphere packing will exceed $\pi/\sqrt{12}$ for $r \geq \sqrt{294}$.

%\noindent (ii) We use Thm. \ref{thm:Homomorphism} with $\mathcal{P} = B_2^2(r)$. We only need to check $r \in \mathcal{D}_{n,2}$ (given by Proposition \ref{thm:distanceSet}). If $|B_2^2(r)|$ is square free, there is only one Abelian group to test. Otherwise, we need to look at homomorphisms between product of cyclic groups. Verification of homomorphisms was performed using Wolfram Mathematica$^\circledR$. From this, we can conclude that no such homomorphism exist, unless $r = 1, \sqrt{2},2 $ or $2 \sqrt{2}$. Examples of homomorphisms in these cases are $\phi(x,y) = x+2y \in \mathbb{Z}_5,$  $\phi(x,y) = x+3y \in \mathbb{Z}_9$, $\phi(x,y) = x+5y \in \mathbb{Z}_{13}$ (Figure \ref{fig:perfectl2}) and $\phi(x,y) = x+5y \in \mathbb{Z}_{25}$. The perfect codes associated to these four homomorphisms are generated by $\left\{ (1,2),(-2,1) \right\}$, $\left\{ (0,3),(3,-1) \right\}$, $\left\{ (-2,3),(3,2) \right\}$, and $\left\{ (0,5),(5,-1) \right\}$, respectively.

\noindent (ii) We use Theorem \ref{thm:Homomorphism} with $\mathcal{P} = B_2^2(r)$. The existence of possible homomorphisms for critical radii $r\leq \sqrt{294}$ was checked trough an algorithm implemented in Wolfram Mathematica which made use of  Proposition  \ref{thm:distanceSet} to discard a meaningful number of cases to be tested. If $|B_2^{2}(r)|$ is square free only the cyclic group needs to be considered otherwise other possible  Abelian groups  were checked for the existence  of homomorphisms.  From these we can conclude that no such homomorphism exist, unless $r = 1, \sqrt{2},2 $ or $2 \sqrt{2}$. Examples of homomorphisms in these cases are $\phi(x,y) = x+\overline{2}y \in \mathbb{Z}_5,$  $\phi(x,y) = x+\overline{3}y \in \mathbb{Z}_9$, $\phi(x,y) = x+\overline{5}y \in \mathbb{Z}_{13}$ and $\phi(x,y) = x+\overline{5}y \in \mathbb{Z}_{25}$. The perfect codes associated to these homomorphisms as their kernels are the lattices with bases $\{(1,2),(0,5)\}$,
$\{(3,2), (0,3)\}$ , $\{(1,5)),(3,2)\}$ and $\{(5,4),(0,5)\}$, respectively. We can also view these lattices as if they were obtained by Construction A from the perfect codes $\left\langle \left(\overline{1},\overline{2}\right) \right\rangle \subset \mathbb{Z}_{5}^{2}$,  $\left\langle \left(\overline{3},\overline{2}\right) \right\rangle \subset \mathbb{Z}_9^{2}$, $\left\langle \left(\overline{1},\overline{5}\right)\right\rangle \subset \mathbb{Z}_{13}^{2}$ (See Figure 3)  and $\left\langle \left(\overline{5},\overline{4}\right)\right\rangle \subset \mathbb{Z}_{25}^{2}$ in the $\ell_2$ metric. 

\begin{figure}[!htb]
\centering
\includegraphics[scale=0.4]{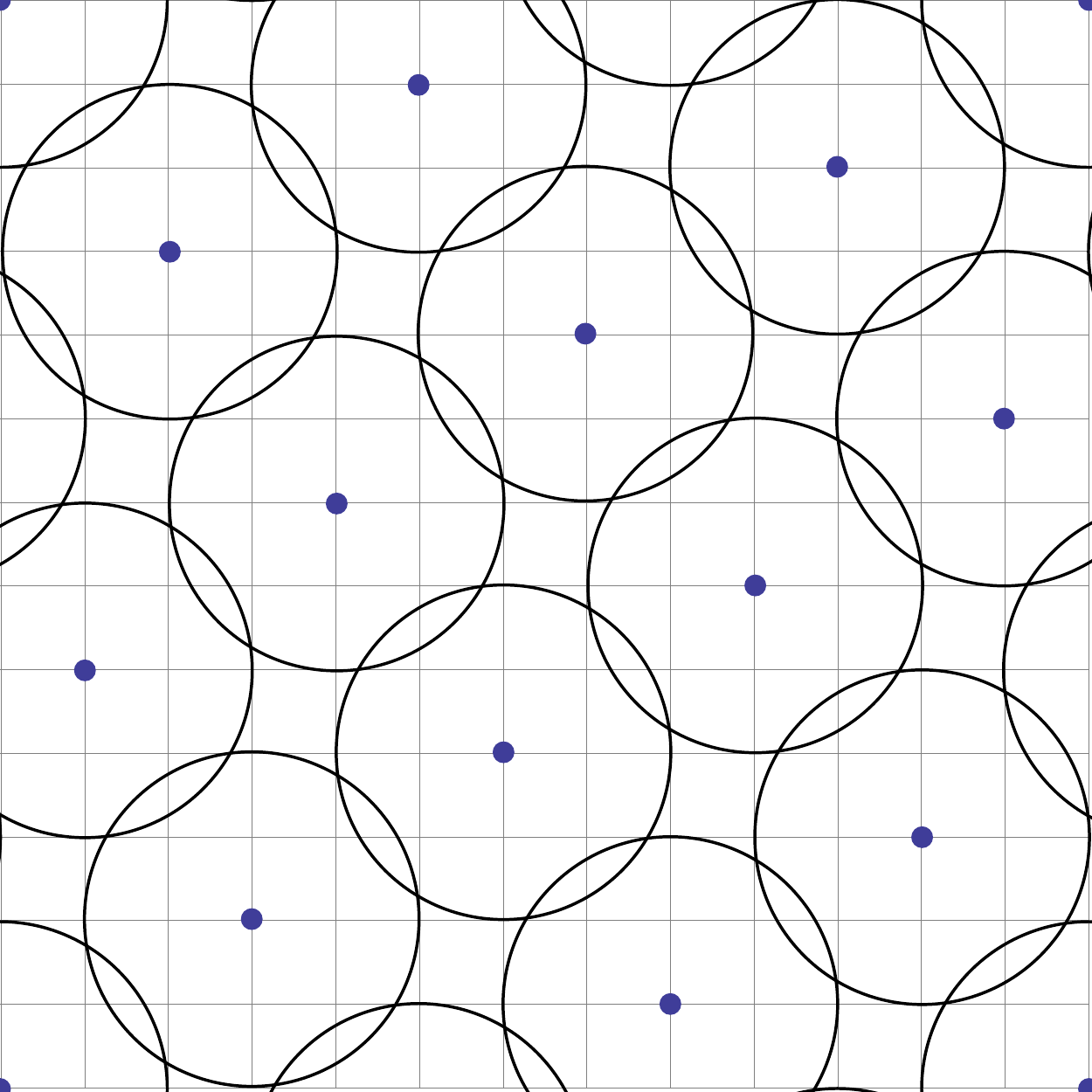}
\caption{Packing balls for the code $\mathscr{C} = \left\langle
\left(\overline{1},\overline{5}\right)\right\rangle \subset \mathbb{Z}_{13}^{2}$ in the $\ell_2$ metric.}
\label{fig:perfectl2}
\end{figure}
\end{proof}

\begin{remark}  Since $B_2^{2}(1) = B_p^{2}(1)$ and $B_2^{2}(2) = B_p^{2}(2)$ for any $p$, $1 \leq p < \infty$, for $r=1$ and $r=2$ perfect codes in $\mathbb{Z}^{2}$ in the $\ell_2$ metric, as the one listed in the last theorem, are also perfect in the $\ell_p$ metric for any $p$. Note also that $B_{2}^{2}(\sqrt{2}) = B_{\infty}^{2}(1)$  and $B_{2}^{2}(2\sqrt{2}) = B_{\infty}^{2}(2)$  and then the perfect codes in the $\ell_2$ metric with radii  $\sqrt{2}$ and $2\sqrt{2}$ of the last theorem as well as the trivial codes $3\mathbb{Z}^{2}$  and $5\mathbb{Z}^{2}$  are perfect codes in $\mathbb{Z}^{2}$  in the $\ell_{\infty}$ metric with radii $1$ and $2$, respectively (Corollary \ref{cor:perfectinfty}).  	
	
%	The cases $r = \sqrt{2}$ and $r = 2 \sqrt{2}$ can be realized via Corollary \ref{cor:perfectinfty}, associated to the perfect codes the $\ell_{\infty}$ metrics of radii $1$ and $2$, respectively $\Lambda = 3 \mathbb{Z}^2$ and $\Lambda = 5 \mathbb{Z}^2$. The case $r = 1$ is a consequence of Remark \ref{rmk:1isTrivial}. For $r = 2$, an example of perfect code is the $\Lambda(C)$, where $C$ is the code of Example \ref{ex:bla}.
\end{remark}

\begin{theorem}\label{thm:dimension3} There are no linear $r$-perfect codes in $\mathbb{Z}^3$ in the $\ell_2$ metric, unless $r = 1,$ or $\sqrt{3}$.
\end{theorem}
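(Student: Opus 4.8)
The plan is to follow the same two-part structure used in the proof of Theorem \ref{thm:dimension2}, adapted to $n = 3$. First, the \emph{asymptotic part}: from Corollary \ref{cor:nonExistenceAsymptotic}, together with the known best lattice packing density in $\mathbb{R}^3$ (the FCC lattice, density $\pi/\sqrt{18}$), we get a crude threshold radius beyond which no linear perfect code can exist. As in dimension $2$, this threshold can then be sharpened: for each critical radius $r$ with $r \le \sqrt{838}$ one computes the density $V_{3,2}(r-\sqrt{3}/2)^3/\mu_2(3,r)$ of the sphere packing induced by a hypothetical $r$-perfect code (using $d_2 = d_2(\Lambda) \ge 2r_2 - O(1)$ to relate $d_2$ and $r$ via Lemma \ref{thm:radiusAsymptotic}, or more simply the explicit bounds there), and observes that this exceeds $\pi/\sqrt{18}$ once $r$ is moderately large, leaving only a finite and explicit list of radii to check. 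One must also invoke Proposition \ref{thm:distanceSet}(2) to restrict to radii $r$ with $r^2$ not of the form $4^m(8k+7)$, since only those are achievable distances in $\mathbb{Z}^3$.

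Second, the \emph{finite check}: for each surviving radius $r$, set $\mathcal{P} = B_2^3(r) \subset \mathbb{Z}^3$, compute $m = |B_2^3(r)| = \mu_2(3,r)$, and apply Theorem \ref{thm:Homomorphism}. A linear $r$-perfect code in $\mathbb{Z}^3$ exists iff there is an Abelian group $G$ of order $m$ and a homomorphism $\phi: \mathbb{Z}^3 \to G$ whose restriction to $B_2^3(r)$ is a bijection. One enumerates the isomorphism classes of Abelian groups of order $m$ (only $\mathbb{Z}_m$ is needed when $m$ is cube-free in each prime, and in general finitely many by the structure theorem), and for each searches for a valid $\phi$; equivalently, one searches for a sublattice $\Lambda \subset \mathbb{Z}^3$ of index $m$ such that $B_2^3(r)$ is a complete set of coset representatives. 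This is a finite computation, carried out (as in Theorem \ref{thm:dimension2}) by computer, and the outcome is that a valid homomorphism exists only for $r = 1$ (with $B_2^3(1) = B_1^3(1)$, the Golomb--Welch cross, tiling via the standard lattice homomorphism onto $\mathbb{Z}_7$) and $r = \sqrt{3}$ (where $B_2^3(\sqrt{3}) = B_\infty^3(1)$, the $3 \times 3 \times 3$ cube, which tiles trivially, e.g.\ via $3\mathbb{Z}^3$ or the code $\langle(\overline{1},\overline{3},\overline{9})\rangle \subset \mathbb{Z}_{27}^3$). For all other radii up to the threshold, no such homomorphism exists.

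The main obstacle is purely the size of the finite search in part (ii): unlike $n = 2$, where $\mu_2(2,r)$ grows like $\pi r^2$, here $\mu_2(3,r)$ grows like $\tfrac{4\pi}{3}r^3$, so the group orders $m$ to be handled get large quickly and may have many Abelian group structures, and for each group the search for a bijective homomorphism is combinatorially expensive. Managing this requires good pruning heuristics — e.g.\ using Proposition \ref{thm:distanceSet} to discard non-achievable radii outright, using divisibility and the ``prime power'' constraints on $G$ coming from the shape of $B_2^3(r)$, and exploiting symmetry of the ball under the hyperoctahedral group to cut the homomorphism search — exactly the refinements alluded to in the proof of Theorem \ref{thm:dimension2}. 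Once the search space is tamed, the verification that only $r = 1$ and $r = \sqrt{3}$ survive is routine, and exhibiting the two codes above completes the proof.
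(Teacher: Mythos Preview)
Your proposal is correct and follows essentially the same approach as the paper's proof: an asymptotic bound via Corollary \ref{cor:nonExistenceAsymptotic} with $\Delta_2^3 = \pi/(3\sqrt{2})$, sharpened by explicitly computing $V_{3,2}(r-\sqrt{3}/2)^3/\mu_2(3,r)$, followed by a finite computer search over Abelian-group homomorphisms via Theorem \ref{thm:Homomorphism}, with Proposition \ref{thm:distanceSet} used to prune non-achievable radii. One slip: the initial threshold from Table \ref{tab:tab} for $n=3$ is $\sqrt{299}$, not $\sqrt{838}$ (that is the $n=2$ entry); the paper sharpens this to $r \le \sqrt{92}$ before running the homomorphism search, and your ``cube-free'' should read ``square-free'' for the cyclic-only case.
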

\begin{proof} Again, by using the same arguments of the previous theorem, we can tighten the estimate provided by Table \ref{tab:tab}, by explicitly calculating
$$\frac{V_{3,2} (r - \sqrt{3}/2)^2}{\mu_2(3,r)} = \frac{(4/3) \pi (r - \sqrt{3}/2)^3}{\mu_2(3,r)},$$
for $r\leq \sqrt{299}$ and ensuring that it does not exceed the best Euclidean packing density in $\mathbb{R}^{3}$, $\Delta_{2}^{3} = \frac{\pi}{3\sqrt{2}}$. This gives us $r \leq \sqrt{92}$. We consider Theorem \ref{thm:Homomorphism} with $\mathcal{P}=B_2^{3}(r)$ and Proposition \ref{thm:distanceSet} to discard a meaningful number of cases to be tested. An algorithm in Wolfram Mathematica was used taking this into consideration to check all possible homomorphisms.  We conclude then that the only possible critical radii for perfect codes in $\mathbb{Z}^{3}$ in the $\ell_2$ metric  are $1$ and $\sqrt{3}$. Examples of homorphisms in these cases are $\phi(x,y,z) = x+ \overline{2}y+\overline{3}z \in \mathbb{Z}_{7}$ and $\phi(x,y,z) = x+ \overline{3}y + \overline{9}z \in \mathbb{Z}_{27}$ 
and and their kernels provide the  associated perfect  codes   given by the lattices with basis $\alpha=\{(1,0,2),(0,1,4),(0,0,7)\}$ and $\beta=\{(3,8,0),(0,3,2),(0,0,3)\}$, respectively. We can also view  these lattices as if they were obtained by Construction A from the perfect codes $\left\langle \left(\overline{1},\overline{0},\overline{2}\right),\left(\overline{0},\overline{1},\overline{4}\right) \right\rangle \subset \mathbb{Z}_{7}^{3}$ and  $\left\langle \left(\overline{3},\overline{8},\overline{0}\right), \left(\overline{0},\overline{3},\overline{2}\right) \right\rangle \subset \mathbb{Z}_{27}^{3}$ in the $\ell_2$ metric. 
%and ensuring that it does not exceed $\Delta_2 = 0.74048$. This gives us $r \leq \sqrt{92}$. For the non-asymptotic case, we use Thm. \ref{thm:Homomorphism} with $\mathcal{P} = B_2^3(r)$. In the remaining cases $r = 1$ and $r = \sqrt{3}$, the homomorphisms $\phi(x,y,z) = x+2y+3z \in \mathbb{Z}_7$ ($r = 1$) and $\phi(x,y,z) = x+3y+9z \in \mathbb{Z}_{27}$, restricted to the respective balls, are bijections, and therefore produce tilings of $\mathbb{Z}^n$.
\end{proof}

\begin{remark}  Since $B_{1}^{n}(1) = B_p^{n}(1)$ for any $n$ and $p$, $1 \leq  p < \infty$, for $r=1$ the lattice with basis $\alpha$ in the last proposition is also $1$-perfect for any $p$, $1\leq p< \infty$. On the other hand  any perfect code in $\mathbb{Z}^{3}$  in the $\ell_2$ metric and  packing radius $\sqrt{3}$, as the one given in the last proposition and also by $3\mathbb{Z}^{3}$, is also perfect in the $\ell_{\infty}$ metric  with packing radius $1$ (Corollary \ref{cor:perfectinfty}). \end{remark}

For $n \geq 4$, the number of cases that need to be checked is increasingly large. We thus believe that new algebraic techniques should be used to establish the non-existence of perfect codes. Furthermore, from Proposition \ref{prop:cubicPolyominoes}, cubic polyominoes do not exist in the $\ell_2$ metric for $n \geq 4$ since for all integer $r>0$, the condition $n r^{2} < (r+1)^2$ implies $n <4$. Since these are the only cases found computationally, we believe that the following conjecture is true:

\begin{conjecture} There are no linear perfect codes with parameters $(n,r,2)$ except if $r = 1$ or $(n,r) \in \left\{ (2,\sqrt{2}), (2,2), (2, 2\sqrt{2}), (3,\sqrt{3}) \right\}$. In particular, there are no linear perfect codes in the $\ell_2$ metric if $n > 3$ and $r > 1$.

\end{conjecture}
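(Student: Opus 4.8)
Since the final statement is posed as a conjecture and the authors leave it open, what follows is a proposed program of attack rather than a finished argument. The natural decomposition is into two regimes: a "large $r$" regime, where the density bound of Section~\ref{subsec:Asymptotic} already does most of the work, and a "small/medium $r$" regime, where one must argue by algebra or geometry; the second regime is the genuine obstacle.

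For the large-$r$ regime, the plan is to make Corollary~\ref{cor:nonExistenceAsymptotic} fully effective for each fixed $n$. By that corollary any linear $r$-perfect code in the $\ell_2$ metric satisfies $r \le \tfrac{\sqrt n}{2}\,\frac{1+(\Delta_2^n)^{1/n}}{1-(\Delta_2^n)^{1/n}}$, so one substitutes an upper bound on the lattice packing density: the exact values of $\Delta_2^n$ (known for $n\le 8$ and $n=24$) in low dimensions, and the Blichfeldt/Kabatiansky--Levenshtein-type bounds, or even the elementary $\Delta_2^n \le 2^{-n/2+\log(n/2+1)}$ quoted after Corollary~\ref{cor:nonExistenceAsymptotic}, once $n$ is large. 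Exactly as in the proofs of Theorems~\ref{thm:dimension2} and~\ref{thm:dimension3}, one then sharpens the resulting threshold by comparing the Euclidean volume $\pi^{n/2}/\Gamma(n/2+1)\cdot(r-\sqrt n/2)^n$ of the shrunken superball directly against the integer-point count $\mu_2(n,r)$. This step reduces the conjecture, for each fixed $n$, to a finite list of candidate radii $r\in\{\sqrt 2,\sqrt 3,2,\ldots\}$ below some $\overline r_n=O(\sqrt n)$.

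For the medium-radius regime, the plan is to invoke the homomorphism criterion, Theorem~\ref{thm:Homomorphism}: a lattice tiling by $\mathcal P=B_2^n(r)$ exists iff there is an abelian group $G$ with $|G|=\mu_2(n,r)$ and a homomorphism $\phi:\mathbb{Z}^n\to G$ restricting to a bijection on $\mathcal P$. Writing $g_i=\phi(\bm{e}_i)$, bijectivity says exactly that $\{\sum_i a_i g_i:(a_1,\ldots,a_n)\in B_2^n(r)\}$ consists of $|\mathcal P|$ distinct elements of $G$, a generalized Sidon / $B_h[1]$-type condition attached to the weight pattern of $\mathcal P$. The case $r=\sqrt 2$ is the cleanest test: $B_2^n(\sqrt 2)=\{\bm{0},\pm\bm{e}_i,\pm\bm{e}_i\pm\bm{e}_j\}$ has $2n^2+1$ points, so one must find $g_1,\ldots,g_n$ in a group of order $2n^2+1$ with $0,\pm g_i,\pm g_i\pm g_j$ pairwise distinct, and I would try to prove this impossible for all $n\ge 3$ by an averaging / character-sum argument over $G$, or by a counting inequality showing that $2n^2+1$ forced values cannot all be distinct once $n$ is moderate. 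The radii $r=\sqrt 3$ and $r=2$ are handled the same way, with larger balls and heavier bookkeeping (e.g.\ $B_2^n(2)$ also contains $\pm 2\bm{e}_i$, the triples $\pm\bm{e}_i\pm\bm{e}_j\pm\bm{e}_k$, and quadruples when $n\ge 4$).

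The hard part — and the reason the conjecture is still open — is that the brute-force search used for $n=2,3$ does not scale: the number of candidate radii, the number of abelian groups $G$ of the prescribed order, and the number of homomorphisms all grow with $n$, so one needs a \emph{uniform} obstruction for the whole range $\sqrt 2\le r\le\overline r_n$, valid for all $n\ge 4$ simultaneously. The most promising substitute I can see is a Golomb--Welch-style \emph{local} argument: assuming $\Lambda$ tiles $\mathbb{Z}^n$ by $B_2^n(r)$ with $r\ge 2$, examine how the translate $\bm{c}+B_2^n(r)$ must mesh with its neighbours along an extreme coordinate direction and derive a contradiction from the incompatibility between the flat faces of the polyomino $T_2^n(r)$ and a lattice tiling — an $\ell_2$ analogue of the face-counting contradiction of \cite{Golomb} and of the quasi-cross techniques of \cite{Schwartz2}. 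Making such an argument work uniformly in $n$ and $r$, rather than dimension by dimension, is precisely the open point; without it, the conjecture would only be accessible for finitely many $n$ at a time, and even then only to the extent that sharper bounds on $\Delta_2^n$ become available in intermediate dimensions.
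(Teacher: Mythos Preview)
There is nothing to compare here: the paper does not prove this statement. It is stated explicitly as a \emph{conjecture}, motivated by Theorems~\ref{thm:dimension2} and~\ref{thm:dimension3} together with the observation (via Proposition~\ref{prop:cubicPolyominoes}) that cubic polyominoes in the $\ell_2$ metric do not exist for $n\ge 4$, and by the fact that the computational searches in dimensions $2$ and $3$ found nothing beyond the listed exceptions. The paper offers no further argument.

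You correctly identify the status of the statement and present a program rather than a proof. Your outline is entirely in the spirit of the paper's own methods: the large-$r$ regime via Corollary~\ref{cor:nonExistenceAsymptotic} and explicit density comparison, followed by the homomorphism criterion of Theorem~\ref{thm:Homomorphism} for the remaining finite list of radii, is exactly how Theorems~\ref{thm:dimension2} and~\ref{thm:dimension3} are proved. Your diagnosis of the obstruction --- that the brute-force enumeration of abelian groups and homomorphisms does not scale, and that a uniform-in-$n$ local or algebraic obstruction is what is missing --- is accurate and matches the paper's own remark that ``new algebraic techniques should be used'' for $n\ge 4$. So your proposal is a faithful and well-organized restatement of the open problem and the natural line of attack, but it is not a proof, and the paper contains none either.
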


%We do not have much evidence to believe that the above conjecture is also true for non-linear perfect codes. We close this paper with a result quantitatively weaker than Theorem \ref{thm:nonExistenceAsymptotic}, but that also holds for non-linear codes and may provide some geometric insights on the tiling problem.

In the next section we get some quantitatively weaker results but that also hold for non linear codes and may provide some insights on the tiling problem.

\section{Non-Existence of Tilings of Special Formats}\label{no}

When $n$, $p$ and $r$ vary, the polyomino $T_p^{n}(r)$ may change its shape. In this section we prove ``from scratch'' that there is no tiling of $\mathbb{R}^2$ by $T_p^{2}(r)$  when $r>2$ is an integer and $1 < p < \infty$ satisfies $(r-1)^{p}+2^{p} \leq r^{p}$. This proof holds even for non-linear codes (non-lattice tilings) and for all $p\geq 2$.

\begin{figure}[!htb] \label{fig:polyointeger} \[\begin{array}{cccc}
\includegraphics[scale=0.3]{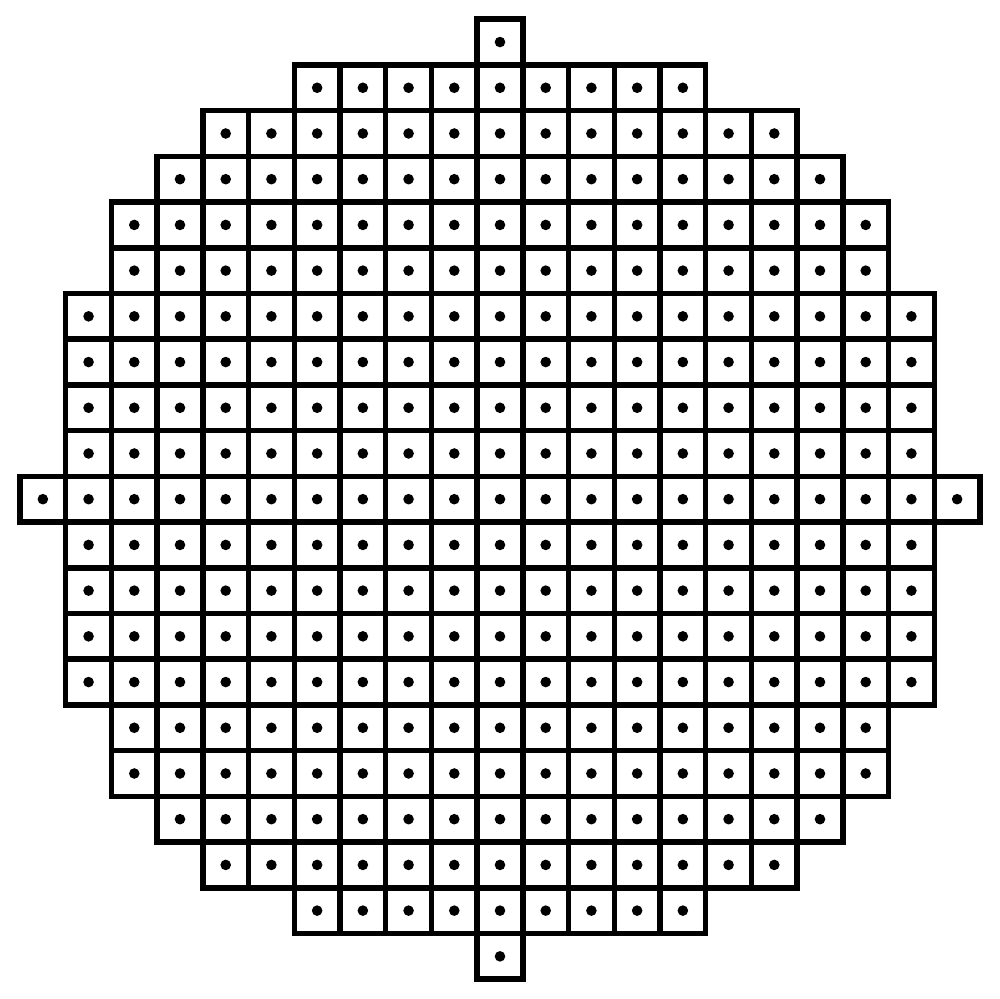} & \includegraphics[scale=0.3]{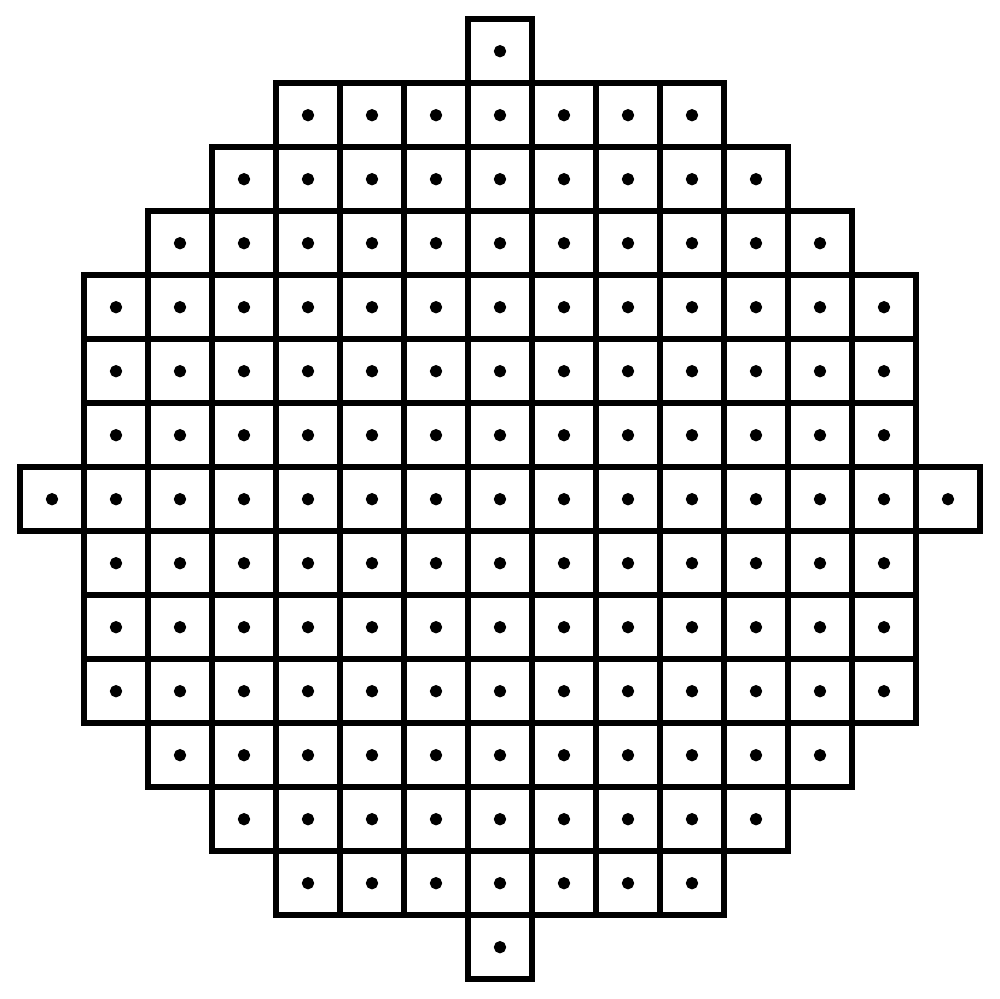} & \includegraphics[scale=0.15]{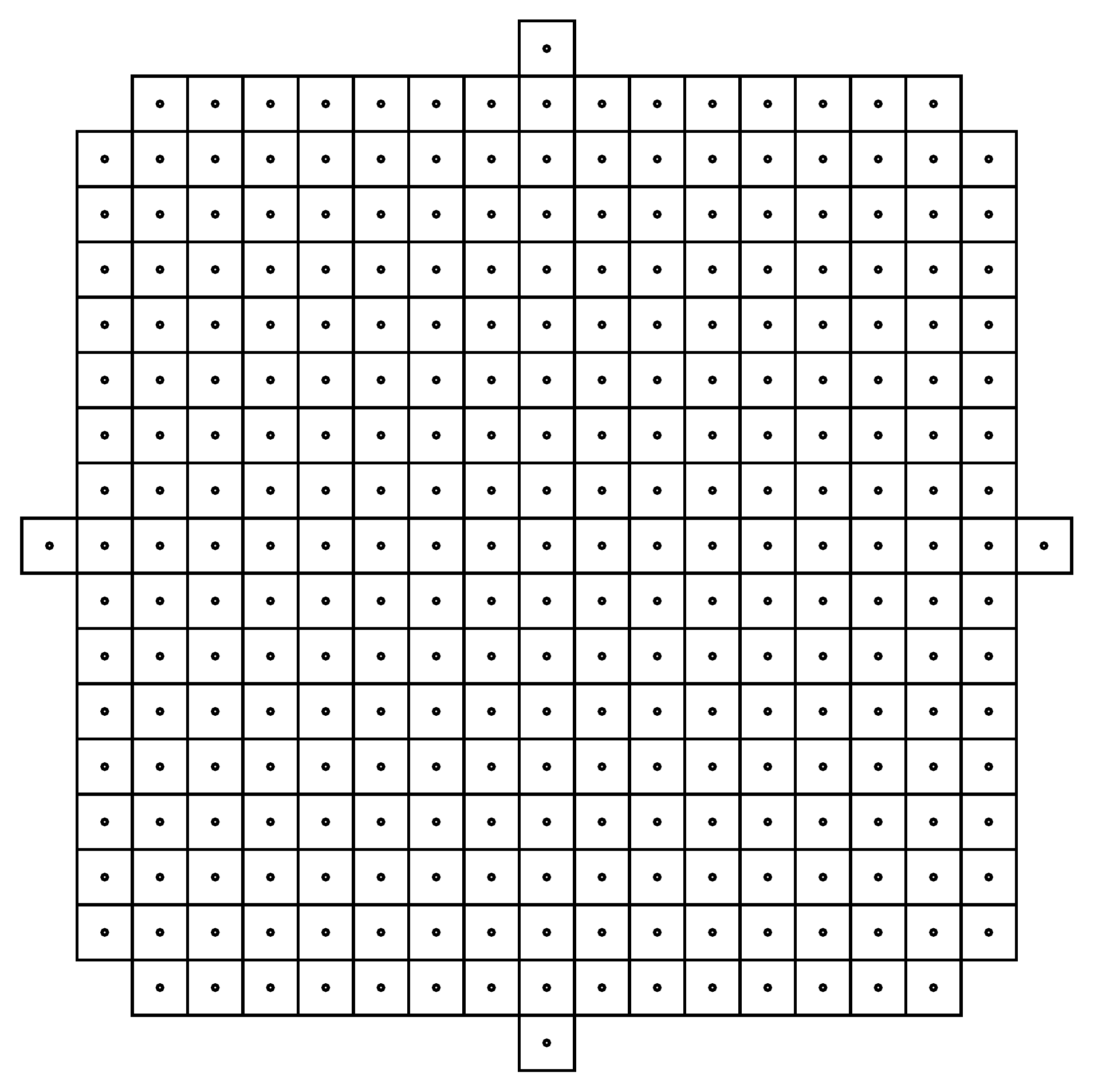} & \includegraphics[scale=0.15]{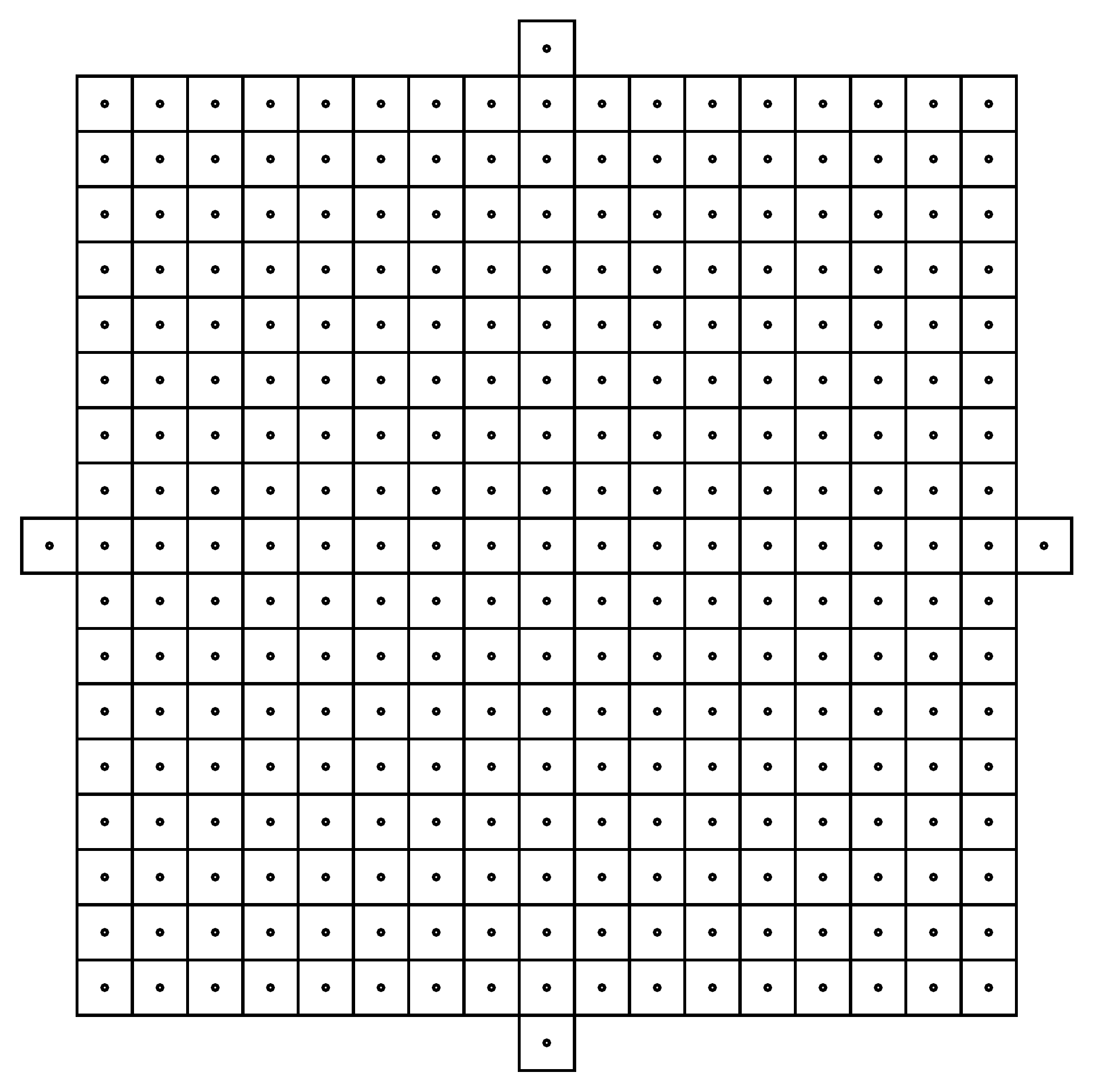}
\\ T_{2}^{2}(10) & T_{2}^{2}(7) & T_{4}^{2}(10) & T_{6}^{2}(10)
\end{array}\]
\caption{Polyominoes $T_p^{n}(r)$}
\end{figure}

Let $r$ be a positive integer and suppose that there is a tiling of $\mathbb{R}^{n}$ by translates  of $T_p^{n}(r)$. Let $\mathscr{C}$ be the set associated to the translations of such a tilling. From \cite[Thm. 4, p. 36]{Szabo} we may assume without loss of generality $\mathscr{C} \subset \mathbb{Z}^{n}.$ Suppose that $\cc_1 = (0,0,\ldots,0) \in \mathscr{C}$ (i.e., that the polyomino centered at the origin $T_p^{n}(r)$ is part of the tiling). 

For $\cc \in \mathscr{C}$ we call $\xx=\cc \pm r \bm{e}_i \in \cc + T_p^{n}(r)$ an \textbf{endpoint} since $\cc \pm r\bm{e}_i  \pm \bm{e}_j \not\in T_p^{n}(r)$ if $j\neq i$. Otherwise $\xx \in T_p^{n}(r)$ is called an \textbf{ordinary point}. 

Once fixed $n=2$, $r$ a positive integer and $1 < p < \infty$ such that $(r-1)^{p} + 2^{p} \leq r^{p}$, let $T = T_p^{2}(r) = T_1$.  

Note that if $r>2$, $\cc  \pm (r-1)\bm{e}_i \pm h\bm{e}_j \in T_p^{2}(r)$ for $j\neq i$ and $h=1,2$. This shape in the surrounding of endpoints will imply that there no tiling of $\mathbb{R}^{2}$ by $T_p^{2}(r)$ as shown next.

%A point $\xx \in \bigcup_{\cc \in \mathscr{C}} (\cc + \pt)$ is said to be an \textbf{endpoint} of $\tilde{\cc} + \pt$ if $\xx = \tilde{\cc} \pm (r + 1) \bm{e}_i$ for some $\tilde{\cc} \in \mathscr{C}$. Otherwise, we say that $\xx$ is an \textbf{ordinary point}.

We have that  $(r,0)$ is an endpoint of $T_1$. Consider $\yy_1 = (r,-1) \not\in T_1$. Then $\yy_1$ is either an endpoint or an ordinary point for another tile, say, $T_2$. The technique is to exhibit points that cannot be covered by any tile without overlap with one of the preceding tiles.

First observe that the general lemma holds for tilings by translates of $T_1$.

\begin{lemma}[Opposite Endpoints] Let $\xx \in  \cc + T$ be an endpoint such that $\xx = \cc + r \ee_i$. Suppose that $\xx \pm \ee_j$, $j \neq i$, is an endpoint of another tile $\tilde{T} =  \tilde{\cc}+ T$. Then $\xx \pm \ee_j = \tilde{\cc} -r\ee_i$. Similarly, if $\xx = \cc  -r \ee_i$,  then $\xx \pm \ee_j = \tilde{\cc} + r\ee_i$.
\end{lemma}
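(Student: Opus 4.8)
The statement to prove (the ``Opposite Endpoints'' lemma) says: if $\xx = \cc + r\ee_i$ is an endpoint of the tile $\cc + T$, and $\xx \pm \ee_j$ (for some $j \neq i$) happens to be an endpoint of another tile $\tilde T = \tilde\cc + T$, then necessarily $\xx \pm \ee_j = \tilde\cc - r\ee_i$; symmetrically for $\xx = \cc - r\ee_i$. The plan is to enumerate which of the $2n$ possible endpoint-types of $\tilde T$ the point $\xx \pm \ee_j$ could be, and rule out all but the claimed one. Since $\xx \pm \ee_j$ is an endpoint of $\tilde T$, we may write $\xx \pm \ee_j = \tilde\cc + \varepsilon r \ee_k$ for some coordinate direction $k$ and some sign $\varepsilon \in \{+1,-1\}$. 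I would then show $k = i$ and $\varepsilon = -1$.

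First I would exploit the key geometric feature of endpoints already isolated in the text: if $\zz = \cc + r\ee_i$ is an endpoint of a tile $\cc + T$, then $\zz - r\ee_i + h\ee_l \in \cc + T$ for appropriate small $h$ and all $l$ (the tile is ``thick'' near the far side), whereas $\zz \pm \ee_l \notin \cc + T$ for $l \neq i$ and also $\zz + \ee_i \notin \cc+T$ — i.e.\ an endpoint is an extreme point of the tile that sticks out in exactly one direction. Now apply this to $\tilde T$ at its endpoint $\xx \pm \ee_j = \tilde\cc + \varepsilon r\ee_k$. The point $\xx = (\xx \pm \ee_j) \mp \ee_j$ lies in $\tilde\cc + T$ (it is adjacent to an endpoint of $\tilde T$ in a direction $\neq k$, hence an ordinary point of $\tilde T$, unless $j = k$). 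But $\xx$ also lies in $\cc + T$; since the tiling is by \emph{disjoint} translates, the only way $\xx$ can be in both $\cc + T$ and $\tilde T$ is if $\cc + T = \tilde T$, which is impossible because $\xx \pm \ee_j \in \tilde T \setminus (\cc+T)$. Hence we are forced into $j = k$, so $\xx \pm \ee_j = \tilde\cc + \varepsilon r \ee_j$ — wait, this needs care: let me instead use $\xx$ itself together with a neighbour.

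The clean way: since $\xx \in \cc + T$ is an endpoint in direction $i$, and $\xx \pm \ee_j \in \tilde T$ with $j \neq i$, the point $\xx$ is at $\ell_p$-distance $1$ from an endpoint of $\tilde T$ along direction $j$; because $j \neq$ (the direction of that endpoint of $\tilde T$) would make $\xx$ an interior-ish point of $\tilde T$ and hence in $\tilde T \cap (\cc+T)$, contradicting disjointness, we must have that the endpoint direction $k$ of $\tilde T$ equals $j$. Thus $\xx \pm \ee_j = \tilde\cc + \varepsilon r \ee_j$ with $j = k$, so the two tiles have collinear ``spikes'' along the $\ee_j$-axis. Now a short sign analysis: if $\varepsilon = +1$ then $\tilde\cc$ sits on the $-\ee_j$ side of $\xx\pm\ee_j$, and one checks that $\xx = (\tilde\cc + r\ee_j) \mp \ee_j$ then forces $\xx \in \tilde T$ (as $\xx$ is the ordinary point adjacent to the spike), again contradicting disjointness with $\cc + T \ni \xx$. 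Hence $\varepsilon = -1$, i.e.\ $\xx \pm \ee_j = \tilde\cc - r\ee_j$. But we also need $k = i$, not merely $k = j$; re-examining, the correct reading of the lemma is that the matching endpoint of $\tilde T$ points in the direction $i$ (opposite spike), so I must instead conclude from disjointness that $\xx - r\ee_i + (\text{stuff})$, living in $\cc + T$, cannot coincide with the thick near-side of $\tilde T$ unless the spike directions align as stated; the final bookkeeping identifies $k = i$ and $\varepsilon = -1$, giving $\xx \pm \ee_j = \tilde\cc - r\ee_i$.

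\textbf{Main obstacle.} The real work — and the step I expect to be the crux — is the disjointness/overlap argument: carefully showing that if the endpoint-spike of $\tilde T$ attached at $\xx \pm \ee_j$ pointed in any direction other than $-\ee_i$, then some explicit lattice point (built from $\xx$, $\pm\ee_j$, and a small displacement) would lie in both $\tilde T$ and $\cc + T$, contradicting that the translates tile $\mathbb{R}^n$ without overlap. This requires using the ``thickness near the far side'' inequality $(r-1)^p + 2^p \le r^p$ (in dimension $2$) — or its $n$-dimensional analogue stated just before — to certify that the relevant witness points genuinely belong to the tiles. I would organize it as: (1) set up $\xx \pm \ee_j = \tilde\cc + \varepsilon r\ee_k$; (2) use the endpoint structure of both tiles plus disjointness to force $k = i$; (3) a two-case sign argument to force $\varepsilon = -1$; (4) state the symmetric case $\xx = \cc - r\ee_i$ by the same argument with $i \mapsto -i$.
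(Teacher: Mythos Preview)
Your overall plan---write $\xx \pm \ee_j = \tilde\cc + \varepsilon r\ee_k$ for some $k$ and $\varepsilon\in\{\pm1\}$, then eliminate every $(k,\varepsilon)$ except $(i,-1)$ by exhibiting an overlap---is exactly the paper's strategy. But the execution in the middle of your proposal contains a real error that derails the argument.

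You assert that if $k \neq j$ then $\xx = (\xx\pm\ee_j)\mp\ee_j$ lies in $\tilde T$, producing a point of $(\cc+T)\cap\tilde T$. This is backwards. Writing $\xx = \tilde\cc + \varepsilon r\ee_k \mp \ee_j$, when $k \neq j$ the displacement $\varepsilon r\ee_k \mp \ee_j$ has $\ell_p$-norm $(r^p+1)^{1/p} > r$, so $\xx \notin \tilde T$. It is only when $k = j$ (with the sign collapsing) that $\xx$ can land in $\tilde T$. Your deduction ``hence $k=j$'' is therefore the opposite of what the step shows, and it is also the wrong target: the lemma asserts $k = i$, not $k = j$. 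You notice the mismatch (``But we also need $k=i$, not merely $k=j$'') but never repair it; the proposal ends without a working overlap witness.

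The paper uses a different witness, and this choice is the content of the proof. For each bad $(k,\varepsilon)$ it takes
\[
\tilde\yy \;=\; \tilde\cc + \varepsilon(r-1)\ee_k - \ee_i,
\]
i.e.\ one step in from the $\tilde T$-spike along $\ee_k$ and simultaneously one step in from the $T$-spike along $\ee_i$. Membership $\tilde\yy\in\tilde T$ is immediate since $\|\varepsilon(r-1)\ee_k - \ee_i\|_p^p \le (r-1)^p+1 \le r^p$. Substituting $\tilde\cc = r\ee_i \pm \ee_j - \varepsilon r\ee_k$ gives $\tilde\yy = (r-1)\ee_i \pm \ee_j - \varepsilon\ee_k$ (with the obvious collapses when $k\in\{i,j\}$), and membership $\tilde\yy\in T$ reduces to $(r-1)^p + 2 \le r^p$ or, in the worst sub-case $k=j$ with $\varepsilon=-1$, to the standing hypothesis $(r-1)^p + 2^p \le r^p$. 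The only pair for which $\tilde\yy$ fails to lie in $T$ is $(k,\varepsilon)=(i,-1)$, which is the conclusion. Your ``Main obstacle'' paragraph correctly anticipates that the thickness inequality enters exactly here; what is missing is this specific witness point---$\xx$ itself does not do the job.
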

\begin{proof} It is sufficient to prove the lemma for $\cc = \bm{0}$. If $\xx+\ee_j$ is an endpoint of $\tilde{\cc}+ T$, then $\xx+\ee_j = \tilde{\cc} \pm r \ee_k$. We need to prove that $k = i$  and then that $\xx+\ee_j = \tilde{\cc} - r \ee_i$ . If $\xx+\ee_j = \tilde{\cc} + r \ee_k$, then $\tilde{\cc}= r \ee_i + \ee_j - r \ee_k$ and the point $\tilde{\yy} = \tilde{\cc} + (r-1) \ee_k - \ee_i \in T \cap \tilde{T}$, which is a contradiction. On the other hand, if $\xx+\ee_j = \tilde{\cc}  -r \ee_k$, then $\tilde{\cc} = r \ee_i +\ee_j+ r\ee_k$ and $\tilde{\yy} = \tilde{\cc} -(r-1)\ee_k - \ee_i \in T \cap \tilde{T}$, unless $k = i$. The other cases are analogous.
\end{proof}

\begin{theorem}\label{z2integer} Let $r$ be an integer and $1 < p < \infty$ such that $(r-1)^{p}+2^{p} \leq r^{p}$. It is not possible to tile $\mathbb{R}^{2}$ with translates of $T_p^{2}(r)$, unless $r \leq 2$.
\end{theorem}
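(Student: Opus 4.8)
The plan is to argue by contradiction. I would first note that the hypothesis $(r-1)^p+2^p\le r^p$ already forces $r\ge 3$, so I may assume a tiling of $\mathbb{R}^2$ by translates of $T:=T_p^2(r)$ with $r\ge 3$ an integer; as explained above the set $\mathscr{C}$ of translates may be taken in $\mathbb{Z}^2$, and after a translation $\cc_1=(0,0)\in\mathscr{C}$, so $T_1=T$ is a tile with endpoint $(r,0)$. Before any case split I would record three facts, all valid for $r\ge 2$: \textbf{(F1)} by the hypothesis, near an $\ee_1$-endpoint $\cc+r\ee_1$ of a tile $\cc+T$ the cells $\cc+r\ee_1\pm\ee_2$ and $\cc+(r+1)\ee_1$ lie outside the tile while $\cc+(r-1)\ee_1\pm h\ee_2$ lie inside for $h\in\{1,2\}$, and symmetrically for the other endpoint directions; \textbf{(F2)} a cell of $T_p^2(r)$ is an endpoint iff it has exactly one lattice neighbour inside $T_p^2(r)$, every other cell having at least two (monotonicity of $t\mapsto|t|^p$); \textbf{(F3)} the interval property: if $\xx,\xx+2\ee_i\in T_p^2(r)$ then $\xx+\ee_i\in T_p^2(r)$ (convexity of the superball). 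I would also invoke the Opposite Endpoints lemma proved above.

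Since $\yy_1=(r,-1)\notin T_1$, it lies in a tile $T_2=\cc_2+T$ and is either an endpoint or an ordinary point of $T_2$. \textbf{Endpoint case.} As $\yy_1=(r,0)-\ee_2$ and $(r,0)$ is an $\ee_1$-endpoint of $T_1$, the Opposite Endpoints lemma gives $\cc_2=(2r,-1)$, whence (F1) for $T_2$ yields $(r+1,-3)\in T_2$. Now $(r,-2)$ lies in neither $T_1$ nor $T_2$, while among its neighbours $(r-1,-2)\in T_1$ and $(r+1,-2),(r,-1)\in T_2$; hence the tile $T_3=\cc_3+T$ covering $(r,-2)$ can contain at most the neighbour $(r,-3)$, so by (F2) it has $(r,-2)$ as an endpoint with inward neighbour $(r,-3)$, which forces $\cc_3=(r,-(r+2))$. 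Applying (F1) to $T_3$ gives $(r+1,-3)\in T_3$ as well, contradicting $T_2\cap T_3=\emptyset$; the hypothesis inequality has been used once for $T_2$ and once for $T_3$.

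\textbf{Ordinary case.} Then $\yy_1$ needs at least two neighbours in $T_2$, and since $(r-1,-1),(r,0)\in T_1$ are excluded we get $(r+1,-1)\in T_2$ and $(r,-2)\in T_2$. Feeding the memberships $(r,-1),(r+1,-1),(r,-2)\in T_2$ and the non-memberships $(r,0),(r-1,-1),(r-1,-2)\notin T_2$ into the monotonicity of $|t|^p$ pins $\cc_2$ to the form $(r+a,-1-c)$ with integers $a,c\ge1$ obeying $a^p+c^p\le r^p<\min\{(a+1)^p+c^p,\; a^p+(c+1)^p\}$, and then (F1) and (F3) force further cells, e.g. $(r+1,-2),(r+2,-1),(r+2,-2)\in T_2$. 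From here the plan is to run the forced-move analysis to the end: the cell $(r,1)\notin T_1$ also satisfies $(r,1)\notin T_2$ by (F3), so it lies in a tile $T_2'$; if $(r,1)$ is an endpoint of $T_2'$ the Endpoint case reflected in the $x$-axis closes it, so $(r,1)$ is ordinary in $T_2'$ and symmetrically $T_2'\ni(r,1),(r+1,1),(r,2),(r+1,2),(r+2,1)$ with $\cc_2'$ of the analogous form; finally the cell $(r+1,0)$ has three neighbours, $(r,0)\in T_1$, $(r+1,-1)\in T_2$, $(r+1,1)\in T_2'$, already occupied, so if it lies in a new tile that tile is forced by (F2) to be centred at $(2r+1,0)$, and (F1) then puts $(r+2,1)\in T_2'$ and $(r+2,-1)\in T_2$ back into it, a contradiction; the only remaining possibility, $(r+1,0)\in T_2$ or $(r+1,0)\in T_2'$, is eliminated by a short further chain using (F1) around the corresponding endpoint. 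The hard part will be precisely this ordinary case: unlike the endpoint case it does not terminate after a bounded chain but branches, and one must carefully track, for each newly forced tile, exactly which cells it must contain and rule out every placement compatible with the three constraints $a^p+c^p\le r^p$, $(a+1)^p+c^p>r^p$, $a^p+(c+1)^p>r^p$.
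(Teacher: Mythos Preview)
Your setup is sound and your endpoint case is complete—indeed cleaner than the paper's, since you squeeze the tile covering $(r,-2)$ between $T_1$ and $T_2$ and force a single overlap at $(r+1,-3)$, whereas the paper splits $(r,-2)$ into endpoint/ordinary subcases separately. The reflected-endpoint trick for $(r,1)$ also works after one checks that the extra tile $T_2$ from the ordinary case cannot contain $(r,2)$ or $(r,3)$, which you noted.

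The gap is the last branch of the ordinary case, namely $(r+1,0)\in T_2$ (or symmetrically $T_2'$). Your sentence ``eliminated by a short further chain using (F1) around the corresponding endpoint'' does not do the job: with $\cc_2=(r+a,-1-c)$, the membership $(r+1,0)\in T_2$ is equivalent to $(a-1)^p+(c+1)^p\le r^p$, and this is genuinely possible (e.g.\ $p=2$, $r=5$, $a=4$, $c=3$). None of the four endpoints of $T_2$ is forced to lie near $(r+1,0)$, so (F1) applied at an endpoint of $T_2$ gives no local contradiction. Your closing paragraph effectively concedes this: ``the hard part will be precisely this ordinary case \ldots one must carefully track \ldots every placement compatible with the three constraints''—that is exactly the work that remains, and you have not done it.

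The paper closes this subcase with a different, non-local idea. Writing $\cc_2=(r-x_1,-1-x_2)$ and $\cc_{2'}=(r-\tilde x_1,1-\tilde x_2)$ with $x_1,\tilde x_1<0$, it first shows $\tilde x_1\le -2$ (i.e.\ $a'\ge 2$) and then assumes without loss of generality $|\tilde x_1|\le|x_1|$. The key point is to jump to the column $x=r-\tilde x_1=r+a'$: the cell $(r+a',0)$ lies in $T_{2'}$ automatically (it is $\cc_{2'}+(0,\tilde x_2-1)$), while the assumption $(r+1,0)\in T_2$ together with $|\tilde x_1|\le|x_1|$ and the interval property pushes $T_2$ rightward far enough that $(r+a',0)\in T_2$ as well, giving the overlap. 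The mirror case $(r+1,0)\in T_{2'}$ is handled by the analogous point $(r+a',-1)$. So the missing ingredient is not a local (F1) chain but a WLOG symmetry reduction between $T_2$ and $T_{2'}$ followed by looking at a cell roughly $a'$ steps to the right, where both tiles are forced to meet.
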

\begin{proof}  If $\yy_1$ is an endpoint of $T_2 = \cc_2 + T$, then, from the opposite endpoints property, $\yy_1 = \cc_2 -r\ee_1$ and $\cc_2 = (2r,-1)$. Now let $\yy_2 = (r,-2) \in T_3$, where $T_3 = \cc_3+ T$ is a third tile. We have two cases for considering:\\
(i) If $\yy_2$ is an endpoint, then, again from the opposite endpoints, $\yy_2 = \cc_3 + re_1$ and $\cc_3 = (0,-2) \in T_1 \cap T_3$.\\
(ii) If $\yy_2$ is an ordinary point, $\yy_2 = \cc_3 + (x_1,x_2)$ where $(x_1,x_2)$ is an ordinary point of $T$ and $\cc_3=(r-x_1,-2-x_2)$. If $x_1<0$, then $(x_1+1,x_2) \in T$ and then $\yy_2 + \ee_1 = (r+1,-2) \in T_2 \cap T_3$.  If $x_1 \geq 0$, then $(x_1-1,x_2) \in T$ and $\yy_2-\ee_1 = (r-1,-2) \in T_1 \cap T_3$. \\
\indent Now, if  $\yy_1 = (r,-1)$ is an ordinary point of a tile $T_2$, it follows that $\yy_1 = \cc_2 + (x_1,x_2)$, where $\cc_2 = (r-x_1, -1-x_2) \in \mathscr{C}$ and $(x_1,x_2)$ is an ordinary point of $T$.
 Consider $\yy_2 = (r,1) \not\in T_1 \cup T_2.$ \\
(i) If $\yy_2$ is an endpoint of a tile $T_3$, then, from the opposite endpoints property, $\yy_2 = \cc_3 -r \ee_1$ and $\cc_3 = (2r, 1) \in \mathscr{C}$. If $\yy_3 = (r+1,-1)$, then $\yy_3 \in T_2 \cap T_3$. Indeed,  $\yy_3 \in T_2$ since the fact that $\yy_1$ is an ordinary point and $\yy_1 - \ee_1 = (r-1,-1) \in T_1$ implies that $\yy_1 + \ee_1 = \yy_3 \in T_2$. Also $\yy_3= \cc_3 + (-(r-1),-2) \in T_3$.\\
(ii) If $\yy_2$ is an ordinary point of a tile $T_3$, then $\yy_2 = \cc_3 + (\tilde{x_1},\tilde{x_2})$, where $\cc_3 = (r-\tilde{x_1}, 1-\tilde{x_2}) \in \mathscr{C}$ and $(\tilde{x_1},\tilde{x_2})$ is an ordinary point of $T$. Both $x_1$ and $\tilde{x_1}$ are negative. Indeed, if $x_1 \geq 0$, then $(x_1-1,x_2) \in T$ and  $\yy_1 - \ee_1 \in T_1 \cap T_2$.  By a similar argument, if $\tilde{x_1} \geq 0$, then $y_2-e_1 \in T_1 \cap T_3$. We also have $\tilde{x_1} \leq -2$, otherwise, if $ \tilde{x_1} = -1$, $(\tilde{x}_1-1,\tilde{x_2}) \in T$ and $\yy_2 - \ee_1 = \cc_3 + (\tilde{x}_1,\tilde{x}_2) + (-1,0) \in T_3 \cap T_1$. If $-\tilde{x_1}=\min\{-\tilde{x_1},-x_1\}$, then $\yy_3 = (r-\tilde{x_1},0) \in T_3$. \\ 
\noindent (a) If $\yy_4 = (r+1,0) \in T_2 \backslash T_3$, then $\yy_3 = (r -\tilde{x_1},0) \in T_2 \cap T_3$. Indeed, since $-x_1 \geq -\tilde{x_1}$, then $\yy_3 = \yy_4 -\tilde{x_1}\ee_1 -\ee_1 \in T_2$.  \\ 
\noindent (b) If $\yy_4=(r+1,0) \in T_3 \backslash T_2$, then $\yy_5 = (r -\tilde{x_1},-1) \in T_2 \cap T_3$. Indeed, since $-x_1 \geq -\tilde{x_1}$, then $(r,-1)+ (-\tilde{x_1},0) = \yy_5 \in T_2$. Now, note that $\yy_3$ is not an endpoint. Indeed,  $\yy_3 + \ee_2 \in T_3$ and since $\yy_4 = (r+1,0) \in T_3$, $\yy_3 = (r - \tilde{x_1},0) \in T_3$ and $-\tilde{x_1} \geq 2$, it follows that $(r+s,0) \in T_3$ for $s=1,2,\ldots, -\tilde{x_1}$. Since $\yy_3$ is not an endpoint and $\yy_3 = \cc_3 + (0,1-\tilde{x}_2)$ it follows that $\yy_3 - \ee_2 = \yy_5 \in T_3$. \\
\noindent (c) If $\yy_4 = (r+1,0) \not\in T_2 \cup T_3$, then this point does not belong to any other tile $T_4$. Indeed, since $\yy_4 + \ee_2 = (r+1,1) \in T_3$ and $\yy_4 - \ee_2 = (r+1,-1) \in T_2$,  $\yy_4$ must be an endpoint of the form $\yy_4 = \cc_4 - r \ee_2$, where $\cc_4 \in \mathscr{C},$ and then $\yy_4 + \ee_1 + \ee_2 \in T_3 \cap T_4$. 
\end{proof}

This technique ``from scratch'' becames harder in dimensions greater than $2$. The next theorem, proved in Appendix A, is a limit case for general dimension.

\begin{theorem}\label{thm:tiledimensionn} Let $r$ be an integer and $n \geq3$. If $(n-1)(r-1)^{p} + (r-2)^{p} \leq r^{p}$, then it is not possible to tile $\mathbb{R}^{n}$ with translates of $T_p^{n}(r)$, unless $r \leq 2$.
\end{theorem}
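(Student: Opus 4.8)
The plan is to mimic the two-dimensional argument of Theorem \ref{z2integer}, but to reduce the combinatorial case analysis by using the extra slack in the hypothesis $(n-1)(r-1)^p + (r-2)^p \leq r^p$. First I would set up the same framework as in dimension two: assume a tiling of $\mathbb{R}^n$ by translates of $T = T_p^n(r)$ exists, pass to $\mathscr{C} \subset \mathbb{Z}^n$ by \cite[Thm. 4, p. 36]{Szabo}, and place $\cc_1 = \bm 0 \in \mathscr{C}$. I would record the ``thick endpoint'' property this hypothesis grants: because $(n-1)(r-1)^p + (r-2)^p \leq r^p$, for any $\cc \in \mathscr{C}$ and any $i$ the point $\cc \pm(r-1)\ee_i + \sum_{j\neq i} a_j \ee_j$ lies in $\cc + T$ whenever each $|a_j| \leq 1$ and at most one $|a_j|$ is allowed to be $2$ (or more precisely, the configuration with all coordinates $\neq i$ equal to $1$ except possibly one allowed value, exactly as the $(r-2)^p$ term encodes). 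This is the $n$-dimensional analogue of the observation ``$\cc\pm(r-1)\ee_i\pm h\ee_j \in T_p^2(r)$ for $h=1,2$'' used in the planar proof, and it is the geometric engine of the contradiction.

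Next I would establish an $n$-dimensional Opposite Endpoints Lemma: if $\xx = \cc + r\ee_i$ is an endpoint of $\cc + T$ and $\xx + \ee_j$ ($j\neq i$) is an endpoint of another tile $\tilde\cc + T$, then $\xx + \ee_j = \tilde\cc - r\ee_i$; and symmetrically when $\xx = \cc - r\ee_i$. The proof is the same local-overlap computation as the planar lemma, using the thick-endpoint shape around $\tilde\cc \pm r\ee_k$ to derive a point of $(\cc+T)\cap(\tilde\cc+T)$ unless $k=i$ and the sign is forced. I expect this to go through verbatim since the relevant witness points $\tilde\cc \pm(r-1)\ee_k \mp \ee_i$ only involve two coordinates at a time, and the hypothesis certainly implies $(r-1)^p + 1 \leq r^p$.

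Then comes the main argument, following the ``exhibit an uncoverable point'' strategy. Starting from the endpoint $(r,0,\dots,0)$ of $T_1$, I would consider $\yy_1 = (r,-1,0,\dots,0) \notin T_1$, which must be an endpoint or ordinary point of a second tile $T_2$; in each branch I would, as in the planar case, track the forced centers $\cc_2, \cc_3, \dots$ and produce, after a bounded sequence of steps, a point lying in two already-placed tiles, or a point $\yy_4$ that is ``trapped'' (each of $\yy_4 \pm \ee_2$ lies in a distinct neighbor, forcing $\yy_4$ to be an endpoint with a center whose tile then overlaps a neighbor). The key simplification is that the whole interaction takes place in the $2$-plane spanned by $\ee_1, \ee_2$: the thick-endpoint property restricted to these two coordinates is already as strong as what was used for $n=2$ (indeed stronger, since $(r-2)^p \leq 2^p$ for $r\leq 4$ and we only need the weaker $2^p$-slack there), so the planar case analysis of Theorem \ref{z2integer} — the endpoint/ordinary dichotomy for $\yy_1$, the sub-cases for $\yy_2$, and the three sub-cases (a), (b), (c) for $\yy_4$ — transfers essentially word for word, with all coordinates $3,\dots,n$ held at $0$ throughout.

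The main obstacle I anticipate is \emph{verifying that confining the entire configuration to the $\ee_1\ee_2$-plane is legitimate}: when $\yy_2$ (or later points) is declared an ordinary point $\cc + (x_1,x_2,\dots,x_n)$ of some tile, a priori the displacement can have nonzero components $x_3,\dots,x_n$, and one must argue that such components can be ``pushed back'' toward zero without leaving $T$ (using that $(x_1,\dots,x_k\mp 1,\dots,x_n)\in T$ whenever $x_k \neq 0$, which holds because decreasing a coordinate's absolute value only decreases the $\ell_p$-norm) and that doing so does not destroy the overlap being constructed. Handling this bookkeeping carefully — showing each forced center has its non-planar coordinates controlled, or equivalently projecting the argument onto the $\ee_1\ee_2$-plane and checking the projected polyominoes still satisfy the planar hypothesis — is where the real work lies; the hypothesis $(n-1)(r-1)^p+(r-2)^p\leq r^p$ is exactly what guarantees the projected/thickened shape is fat enough in the cross-section directions for the planar obstruction to fire. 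Once that reduction is in place, the contradiction is reached exactly as in Theorem \ref{z2integer}, completing the proof.
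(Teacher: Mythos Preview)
Your plan diverges from the paper's argument at the crucial step, and the divergence is where your acknowledged ``main obstacle'' sits unresolved.

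The paper does \emph{not} attempt to confine the argument to the $\ee_1\ee_2$-plane or to replay the full case tree of Theorem~\ref{z2integer}. Instead it uses the hypothesis $(n-1)(r-1)^p+(r-2)^p\le r^p$ in a much sharper way: any integer point with all coordinates of absolute value $\le r-1$ and at least one coordinate of absolute value $\le r-2$ already lies in $T$. Applied to the ordinary displacement $(x_1,\dots,x_n)$ of $\yy_1=(r,-1,0,\dots,0)$ inside $T_2$, this forces $x_1\le -r+2$ and $x_2\ge r-2$ immediately (otherwise $(x_1-1,x_2,\dots,x_n)\in T$ or $(x_1,x_2+1,\dots,x_n)\in T$, producing an overlap with $T_1$). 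With the first two coordinates of $\cc_2$ thus pinned down to within two values, the paper introduces a single auxiliary point $\yy_2=\cc_2+r\ee_2-\ee_1$, still carrying the non-planar components $-x_3,\dots,-x_n$, and dispatches both the endpoint and ordinary sub-cases for $\yy_2$ in a few lines each. All $n$ coordinates are tracked throughout; nothing is projected.

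Your proposed route --- hold coordinates $3,\dots,n$ at zero and rerun the planar cases (a), (b), (c) --- does not survive the obstacle you flagged. Once $\yy_1$ is an ordinary point of $T_2=\cc_2+T$, the centre $\cc_2$ has components $-x_3,\dots,-x_n$ that are in general nonzero, so the subsequent test points $(r,1,0,\dots,0)$, $(r+1,0,\dots,0)$, etc.\ from the planar proof bear no controlled relation to $T_2$ or $T_3$. ``Pushing back'' $|x_k|$ toward $0$ changes the displacement, not the centre, so it does not move later tiles into the plane; and projecting the tiling onto $\ee_1\ee_2$ yields a cover by $2$-balls of \emph{varying} radii $(r^p-\sum_{j\ge3}|c_j|^p)^{1/p}$, not a tiling by translates of $T_p^2(r)$, so Theorem~\ref{z2integer} does not apply. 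The fix is precisely the paper's: keep the non-planar coordinates explicit and let the ``almost-cube'' hypothesis do the work of collapsing the case analysis, rather than trying to excise those coordinates.
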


Based on the results obtained here we close this paper with an open question: \vspace{0.2cm}

Must a perfect code in $\mathbb{Z}^{n}$ in the $\ell_p$ metric, $2 \leq p < \infty$,  be either  perfect in the $\ell_1$ metric  (Lee metric) or in  the $\ell_{\infty}$ metric?  

\section{Acknowledgments}
The authors would like to thank the reviewers for their comments and suggestions.

\section*{Appendix A}

In Section \ref{no} we have shown that is not possible to tile $\mathbb{R}^{2}$ with translates of the polyomino $T_{p}^{2}(r)$ if $r\geq 3$ is an integer and $1<p<\infty$ satisfies $(r-1)^{p}+2^{p}\leq r^{p}$. We show next that if $(n-1)(r-1)^{p} + (r-2)^{p} \leq r^{p}$, then it is not possible to tile $\mathbb{R}^{n}$ with translates of $T_p^{n}(r)$ for $r\geq 3$ integer and $n \geq 3$. \vspace{0.5cm} 

%When $p$ is a natural number such that $(n-1)(r-1)^{p} + (r-2)^{p} \leq r^{p}$ there are only two possible shapes for $T_p(n,r+1)$:
%
%(i) If $nr^{p}\leq (n+1)^{p}$, then $T_{p}(n,r+1) = T_{\infty}(n,r) \bigcup \left\{ \left(\pm (r+1)\ee_i +   \left[\frac{-1}{2},\frac{1}{2}\right]^n\right),\,\, i=1,\ldots,n \right\}.$
%
%(ii) If $n r^{p} > (n+1)^{p}$, then \\ $T_{p}(n,r+1) =\left(T_{\infty}(n,r) \bigcup \left\{ \left(\pm (r+1)\ee_i +   \left[\frac{-1}{2},\frac{1}{2}\right]^n\right),\,\, i=1,\ldots,n \right\}\right)\backslash \left((\pm a,\ldots,\pm a) + \left[\frac{-1}{2},\frac{1}{2}\right]^n\right).$ \\

{\bf Proof of the Theorem \ref{thm:tiledimensionn}}. \\

Once fixed $n \geq 3$, $r \geq 3$ an integer and $1 < p < \infty$ with $(n-1)(r-1)^{p} + (r-2)^{p} \leq r^{p}$, let $T = T_{p}^{n}(r) = T_1$.

Suppose that there is a tiling of $\mathbb{R}^n$ by translates of $T$ given by elements of  $\mathscr{C} \subset \mathbb{Z}^n$ and that $(0,0,\ldots,0) \in \mathscr{C}.$

We have that  $(r,0,\ldots,0)$ is an endpoint of $T_1$. Consider $\yy_1 = (r,-1,0,\ldots,0) \not\in T_1$. Then $\yy_1$ is either an endpoint or an ordinary point for another tile, say, $T_2$. By direct generalization of the first part of the proof of Theorem \ref{z2integer}, the  case where $\yy_1$ is an endpoint is discarded. 

To discard the remaining case, let $\yy_1 = (r,-1,0,\ldots,0) \in T_2 = \cc_2 + T$ be an ordinary point. Since $\yy_1$ is an ordinary point of $T_2$, it follows that $\yy_1 = \cc_2 + (x_1,\ldots,x_n)$ where $(x_1,\ldots,x_n)$ is an ordinary point of $T$ with $x_1 \leq -r+2$ and $x_2 \geq r-2$. Indeed, if $x_1 > -r+2$ then  $(x_1-1,\ldots,x_n) \in T$ and $\yy_2 = \yy_1 -\ee_1 = (r-1,-1,0,\ldots,0) \in T_1 \cap T_2$ and if  $x_2 < r-2$, then  $(x_1,x_2 +1 \ldots,x_n) \in T$ and $\yy_3 = \yy_1 + \ee_2 = (r,0,0,\ldots,0) \in T_1 \cap T_2$.

From $\yy_1 = \cc_2 + (x_1,\ldots,x_n)$, it follows that $\cc_2 = (r-x_1,-1-x_2,-x_3,\ldots,-x_n)$. Consider $\yy_2 = \cc_2 + r \ee_2 - \ee_1 = (r-1-x_1, -x_2 + r-1, -x_3,\ldots,-x_n) \not\in T_1 \cup T_2$.  There are two cases for considering: \\
\indent (i) If $\yy_2 = (r-1-x_1, -x_2+r-1, -x_3,\ldots,-x_n)$ is an endpoint of a tile $T_3$, then, from the opposite endpoints property, $\yy_2 = \cc_3 - r \bm{e}_2$, where $\cc_3 \in \mathscr{C}$ and $\cc_3 = (r-x_1-1,-1-x_2+ 2r,-x_3,\ldots,-x_n)$. If $x_3 \geq 0$ and $\yy_3 = \yy_2 + x_1 \ee_1 + \ee_2 + \ee_3 =  (r-1, r-x_2, -x_3+1,\ldots,-x_n)$, then $\yy_3 \in T_1 \cap T_3$.  If  $x_3 < 0$ and $\yy_3 = \yy_2 + x_1 \ee_1 + \ee_2 - \ee_3 =  (r-1, r-x_2, -x_3-1,\ldots,-x_n)$, then $\yy_3 \in T_1 \cap T_3$. \\
\indent (ii) If $\yy_2 = (r-1-x_1, -x_2+r-1, -x_3,\ldots,-x_n)$ is an ordinary point of $T_3$, then $\yy_2 = \cc_3 + (\tilde{x_1},\ldots,\tilde{x_n})$, where $\cc_3 = (r-1-x_1-\tilde{x_1},-x_2+r-1-\tilde{x_2},-x_3-\tilde{x_3},\ldots,-x_n-\tilde{x_n}) \in \mathscr{C}$, $(\tilde{x_1},\ldots,\tilde{x_n})$ is an ordinary point of $T$ and $\tilde{x_1} \geq r-2$. Indeed, if $\tilde{x_1} < r-2$, then $(\tilde{x_1}+1,\ldots,\tilde{x_n}) \in T$ and $\yy_2 + \ee_1 \in T_2 \cap T_3$. If $\yy_4 = \yy_2 -  2\tilde{x_1} \ee_1  = (r -1- x_1 - 2\tilde{x_1}, -x_2 +r-1, -x_3,\ldots,-x_n)$, then  $\yy_4 \in T_1\cap T_3$.

Since $\yy_1$ is neither an endpoint nor an ordinary point, the result follows  by contradiction.  \\

\bibliography{arxiv_Version}
\bibliographystyle{alpha}

%% The Appendices part is started with the command \appendix;
%% appendix sections are then done as normal sections
%% \appendix

%% \section{}
%% \label{}

%% If you have bibdatabase file and want bibtex to generate the
%% bibitems, please use
%%
%%  \bibliographystyle{elsarticle-num}
%%  \bibliography{<your bibdatabase>}

%% else use the following coding to input the bibitems directly in the
%% TeX file.

%\begin{thebibliography}{00}
%
%%% \bibitem{label}
%%% Text of bibliographic item
%
%\bibitem{}
%
%\end{thebibliography}
%% The Appendices part is started with the command \appendix;
%% appendix sections are then done as normal sections
%% \appendix

%% \section{}
%% \label{}

%% If you have bibdatabase file and want bibtex to generate the
%% bibitems, please use
%%
%%  \bibliographystyle{elsarticle-num}
%%  \bibliography{<your bibdatabase>}

%% else use the following coding to input the bibitems directly in the
%% TeX file.

%\begin{thebibliography}{00}
%
%%% \bibitem{label}
%%% Text of bibliographic item
%
%\bibitem{}
%
%\end{thebibliography}
\end{document}